\numberwithin{equation}{section}
\newcommand\s{\sigma}
\renewcommand{\L}{\Lambda}
\renewcommand\d{\partial}
\renewcommand\a{\alpha}
\def\l{\lambda}
\def\eps{\varepsilon }
\def\e{\varepsilon}
\renewcommand\d{\partial}
\renewcommand\a{\alpha}
\newcommand\R{\mathbb R}
\def\eps{\varepsilon}
\def\e{\varepsilon}
\def\l{\lambda}
\newcommand\br{\begin{remark}}
	\newcommand\er{\end{remark}}
\newcommand\bp{\begin{pmatrix}}
	\newcommand\ep{\end{pmatrix}}
\newcommand{\be}{\begin{equation}}
\newcommand{\ee}{\end{equation}}
\newcommand\ba{\begin{equation}\begin{aligned}}
\newcommand\ea{\end{aligned}\end{equation}}
\newcommand{\bap}{\begin{app}}
	\newcommand{\eap}{\end{app}}
\newcommand{\begs}{\begin{exams}}
	\newcommand{\eegs}{\end{exams}}
\newcommand{\beg}{\begin{example}}
	\newcommand{\eeg}{\end{exaplem}}
\newcommand{\bpr}{\begin{proposition}}
	\newcommand{\epr}{\end{proposition}}
\newcommand{\bt}{\begin{theorem}}
	\newcommand{\et}{\end{theorem}}
\newcommand{\bc}{\begin{corollary}}
	\newcommand{\ec}{\end{corollary}}
\newcommand{\bl}{\begin{lemma}}
	\newcommand{\el}{\end{lemma}}
\newcommand{\bd}{\begin{definition}}
	\newcommand{\ed}{\end{definition}}
\newcommand{\brs}{\begin{remarks}}
	\newcommand{\ers}{\end{remarks}}
\newcommand{\B }{\mathcal{B}}
\newcommand{\RR}{{\mathbb R}}
\newcommand{\CC}{{\mathbb C}}
\newcommand{\const}{\text{\rm constant}}
\newcommand{\Id}{{\rm Id }}
\newcommand{\diag}{{\rm diag }}
\newcommand{\sgn}{\text{\rm sgn}}
\newtheorem{theorem}{Theorem}[section]
\newtheorem{proposition}[theorem]{Proposition}
\newtheorem{corollary}[theorem]{Corollary}
\newtheorem{lemma}[theorem]{Lemma}
\theoremstyle{remark}
\newtheorem{remark}[theorem]{Remark}
\newtheorem{remarks}[theorem]{Remarks}
\theoremstyle{definition}
\newtheorem{definition}[theorem]{Definition}
\newtheorem{example}[theorem]{Example}
\newcommand\cE{{\mathcal  E}}
\newcommand\cP{{\mathcal  P}}
\newcommand\cO{{\mathcal O}}
\newcommand\cT{{\mathcal T}}
\newcommand\cS{{\mathcal S}}
\newcommand{\spec}{\operatorname{spec}}
\newcommand{\beq}{\begin{equation}}
\newcommand{\eeq}{\end{equation}}
\newcommand{\Hx}{\hat{x}}
\newcommand{\Ht}{\hat{t}}
\newcommand{\abs}[1]{\lvert#1\rvert}
\title{Linear stability analysis for a system of singular amplitude equations arising in biomorphology}
\author{Aric Wheeler}
\address{Indiana University, Bloomington, IN 47405}
\email{awheele@iu.edu }
\thanks{Research of A.W. was partially supported
under NSF grants no. DMS-1700279 and DMS-2038056.}
\author{Kevin Zumbrun}
\address{Indiana University, Bloomington, IN 47405}
\email{kzumbrun@indiana.edu} 
\thanks{Research of K.Z. was partially supported
under NSF grants no. DMS-0300487 and DMS-0801745.}
\begin{document}
\begin{abstract}
	We study linear stability of exponential periodic solutions of a system of singular amplitude equations 
	associated with convective Turing bifurcation in the presence of conservation laws,
	as arises in modern biomorphology models, binary fluids, and elsewhere.
	Consisting of a complex Ginzburg-Landau equation coupled with a singular convection-diffusion
	equation in ``mean modes'' associated with conservation laws, these were shown previously by the authors
	to admit a constant-coefficient linearized stability analysis as in the classical Ginzburg-Landau case-
	albeit now singular in wave amplitude $\eps$-
	yielding useful \emph{necessary} conditions for stability, both of the exponential functions as solutions
	of the amplitude equations, and of the associated periodic pattern solving the underlying PDE.
	Here, we show by a delicate two-parameter matrix perturbation analysis that (strict) satisfaction of
	these necessary conditions is also {\it sufficient} for diffusive stability in the sense of Schneider,
	yielding a corresponding result, and nonlinear stability, for the underlying PDE.
	Moreover, we show that they may be interpreted as stability along a non-normally hyperbolic
	slow manifold approximated by Darcy-type reduction, together with attraction along transverse mean modes,
	connecting with finite-time approximation theorems of H\"acker-Schneider-Zimmerman.
\end{abstract}

\maketitle

\tableofcontents
\section{Introduction}\label{s:intro}
In this paper, we carry out a linearized stability analysis 
for periodic traveling-wave solutions
\be\label{tw}
(A,B)(\hat x,\hat t)=(A_0 e^{i(\kappa \hat x - \omega \hat t)}, B_0), 
\ee
$A_0, B_0 = \const$, of a system of singular amplitude equations
\be\label{ampeq}\tag{mcGL}
\begin{aligned}
	A_{\hat t}&=aA_{\hat x\hat x}+bA+c|A|^2A+dAB,\\
	B_{\hat t}&=\e^{-1}(f B_{\hat x}+h |A|^2_{\hat x})+e_B B_{\hat x\hat x}+\Re(gA\bar{A}_{\hat x})_{\hat x},
\end{aligned}
\ee
with $A, a,b,c\in \CC$, $B\in \R^m$, $d\in \CC^{1\times m}$, $g, h\in\R^{m\times 1}$, $f,e_B\in\R^{m\times m}$, 
$\Re a,\Re b>0$, $\Im\spec(f)=0$, and $\Re \spec( e_B)>0$,
in the limit as $\eps\to 0$.

The first example of such a system, and the striking phenomenon of singular convection in $B$,
was derived via multi-scale expansion by H\"acker, Schneider, and Zimmerman \cite{HSZ} 
in the context of weakly unstable B\'enard-Marangoni and thin-film flow, for which $d=0$ and
the equations partially decouple.
The singular convection they observed is a surprising consequence of the interaction of conserved quantities
and convective forces in the underlying pde.
For, in the absence of conserved quantities, the relevant amplitude equations are the famous (nonsingular)
complex Ginzburg-Landau equations \cite{E,vH,KSM,M3} consisting of the $A$ equation alone, 
whereas, in the absence of convection, the $B$ equation becomes purely diffusive, and
\eqref{ampeq} reduces to a nonsingular system discovered previously by Matthews and Cox \cite{MC}.

The more general, fully coupled version described above was derived in \cite{WZ3} 
by multi-scale expansion as a set of amplitude equations formally
governing weakly unstable behavior near Turing bifurcation of a general family of convection reaction diffusion system
\be \label{bl}
\hbox{$u_t + f(u,\nu)_x-(B(u,\nu)u_x)_x=R(u,\nu):=\bp  R_1(u,\nu)\\ 0_m \ep$, \qquad $R_1$ full rank.}
\ee
$u\in \R^n$, indexed by bifurcation parameter $\nu$ centered about $0$,
in the presence of conservation laws: that is, for $R$ of co-rank $0<m\leq n$. 
As described in \cite{WZ1,WZ3}, the main motivation for this study was from modern biomorphology models
incorporating mechanical/hydrodynamical effects, in particular vasculogenesis models as in \cite{AGS,SBP}.

In particular, the periodic solutions \eqref{tw} of \eqref{ampeq} correspond to approximate solutions
\ba\label{eq:ansatz}
U^\e(\xi,\Hx,\Ht)&=\frac{1}{2}A(\Hx,\Ht)e^{i\xi}r+c.c.+\e^2(B(\Hx,\Ht)+ c.c. + H.O.T. 
\ea
of \eqref{bl},
where $\xi=k_*(x-d_*t)$, $\Hx=\e(x-(d_*+\delta)t)$, and $\Ht=\e^2t$ for $d_*,\delta$ real numbers determined
by formal expansion, $c.c.$ denotes complex conjugate, and $H.O.T$ denotes higher-order terms.
Here, $\nu\sim \eps^2$, while $r$ corresponds to the bifurcating
neutral direction for the Fourier symbol of the linearized equations about the constant state $u(x,t)\equiv U_0$
from which Turing bifurcation occurs.
For relations of model parameters of \eqref{ampeq} to the form of \eqref{bl}, see \cite{WZ3}.

We refer to \eqref{ampeq} as {\it modified complex Ginzburg-Landau equations} by analogy to the standard complex 
Ginzburg-Landau equations playing a corresponding role for convective Turing bifurcation without conservation laws
\cite{M3,WZ1,WZ2}.

\begin{example}\label{AGS}
	An example of \eqref{bl} with $m=1$, arising in biomorphology, 
	is the variation on the hydrodynamic/chemotactic vasculogenesis
	models of \cite{GAC,AGS}:\footnote{We note that the nonconservative form of term $\beta \rho \nabla c$, 
	appearing in the second, \emph{nonconservative} equation of \eqref{MOeq}, does not
	change the analysis, here or in \cite{WZ1,WZ2,WZ3}, in any appreciable way.}
\ba\label{MOeq}
	\partial_t \rho + \nabla \cdot (\rho u)&=\mu \Delta \rho,\\
	\partial_t (\rho u) + \nabla \cdot (\rho u\otimes u)+ \nabla P(\rho) - \beta \rho \nabla c 
	&= \nu \Delta u -\gamma \rho u,\\
	\partial_t (c) &=  D\Delta c + \alpha \rho -\tau^{-1}c;
\ea
	see also \cite{LW}.
Here, $\rho$ and $u$ are density and velocity of endothelial cells,
$P$ pressure, $c$ chemo-attractant concentration, $\alpha >0$ and $\beta>0$ release and cell response rates,
and $\tau>0$ the half-life of the chemo-attractant.
The $\gamma\geq 0$ and $\nu \geq 0$ terms model in different ways drag against the extracellular matrix.
They are set to $\gamma>0$, $\nu=0$ in \cite{AGS} and $\gamma=0$, $\nu>0$ in \cite{GAC};
here, we take $\gamma, \nu > 0$. The term $\mu \Delta \rho$ is a nonphysical ``artificial viscosity'',
set to $\mu=0$ in \cite{GAC,AGS} and here $\mu>0$.
	In \eqref{AGS}, the pressure is taken to be zero below some critical density; in \cite{GAC} it is
	taken to be zero. Here, following \cite{LW}, we take $P(\rho)=A\rho^2$, or, more generally, 
	$P=A\rho^p$, $p\geq 1$.
\end{example}

\smallskip \noindent
\textbf{Existence}. We recall briefly the existence theory for \eqref{ampeq}, both
for context/general interest, and to introduce a supercriticality condition that will be
important in the later stability analysis.
Substituting $|A|=A_0$, $B\equiv B_0$ into \eqref{ampeq} satisfies the second equation automatically
and in the first equation gives a shifted complex Ginzburg-Landau equation
\be\label{shifted}
A_{\hat t}=aA_{\hat x\hat x}+\tilde b  A+c|A|^2A, \qquad \tilde b:= b+dB_0
\ee
for which periodic existence may be treated in standard fashion. 

Specifically, substituting $A=A_0e^{i(\omega \hat t+\kappa \hat x)}$
gives
\be\label{A0soln}
A_0^2=\frac{\Re( \tilde b) - \Re(a)\kappa^2}{-\Re(c)}.
\ee
Here, $\Re(a)>0$ by parabolicity/well-posedness and $\Re(b)>0$ by standard Turing assumptions/exchange of stability
\cite{WZ2}. We will assume also the supercriticality condition
\be\label{supercrit}
\Re(c)<0
\ee
allowing (as is standard) existence of solutions with $\kappa=0$, yielding finally the domain of existence
\be\label{solndom}
\kappa^2<\kappa_{exist}^2:= \frac{\Re(\tilde b)}{\Re(a)} = \frac{\Re( b) + \Re(d) B_0}{\Re(a)}.
\ee

Finally, we assume
\be\label{compat}
\hbox{\rm $\Re \spec(iz f + z^2 e_B)<0$ for nonzero $z \in \R$,}
\ee
as follows from a more detailed analysis of the Turing assumptions, namely, the assumption that
the spectra of the linearized operator for \eqref{bl} about the bifurcating constant solution
be strictly negative for $\sigma\neq 0$ except for a single complex-conjugate pair with real part order $\eps^2$.
Taking account of the asymptotics relating \eqref{ampeq} to \eqref{bl}, this corresponds to the assumption 
on the linearization 
\ba\label{zerolin}
	A_{\hat t}&=aA_{\hat x\hat x}+bA+dAB,\\
	B_{\hat t}&=\e^{-1}f B_{\hat x}+e_B B_{\hat x\hat x}
	\ea
of \eqref{ampeq} about $(A,B)\equiv (0, B_0)$ that spectra be negative except 
for a single complex-conjugate pair with real part order $1$.  
By inspection, the spectra consist of two copies of $\spec(b+ a \sigma^2)$, $b>0$, union with
$\spec(\eps^{-1}f\sigma + \sigma^2 e_b)$, whence \eqref{compat} follows by setting $\sigma=\eps^{-1}z$ to
obtain
$$
\spec(\eps^{-1}f\sigma + \sigma^2 e_b)= \eps^{-2}\spec(zf\sigma + z^2 e_b).
$$

\br\label{B0limrmk}
Note that \eqref{solndom} limits $B_0$ to range
\be\label{B0dom}
B_0\in \sgn \Re(d)\times (-\Re(b)/|\Re(d)|,+\infty).
\ee
In particular, the choice $B_0=0$ is always feasible.
Contrarily, by the change of coordinate $B\to B-B_0$, one could take without loss of generality $B_0=0$
for any existing periodic wave, thereby changing $b$ to $\tilde b>0$.
\er

\smallskip \noindent
\textbf{Stability.} As noted in \cite{WZ3},
just as for the classical complex Ginzburg-Landau equation, the linearized stability
problem for \eqref{ampeq} about exponential waves \eqref{tw} may, by the exponentially weighted change of coordinates
$(A,B)= \Big((A_0+u+iv)e^{i(\kappa \hat x-\omega \hat t)}, B_0+w)$, $u$, $v$, $w$ real, be converted to 
{\it real, constant-coefficient} form
\ba\label{lineq}
u_{\hat t}&=\Re(a)u_{\hat x\hat x}-\Im(a)v_{\hat x\hat x}-2\kappa\big(\Im(a)u_{\hat x}
+\Re(a)v_{\hat x} \big)+2A_0^2\Re(c)u+A_0\Re(d)w,\\
v_{\hat t}&=\Im(a)u_{\hat x\hat x}+\Re(a)v_{\hat x\hat x}+2\kappa\big(\Re(a)u_{\hat x}-\Im(a)v_{\hat x}\big)
+2A_0^2\Im(c)v+A_0\Im(d)w,\\
w_{\hat t}&=\e^{-1}(f w_{\hat x}+2h A_0 u_{\hat x} )+e_B w_{\hat x\hat x}
+2A_0\big(u_{\hat x}\Re(g)+v_{\hat x}\Im(g)+\kappa u\Im(g) \big)_{\hat x},
\ea
or, setting $\mathcal{U}=(u,v,w^T)^T$,
\be\label{Msys}
\mathcal{U}_t = \mathcal{L}\mathcal{U}:= 
M_0 \mathcal{U} + M_1(\eps) \mathcal{U}_{\hat x} + M_2 \mathcal{U}_{\hat x\hat x}
\ee
with $M_j$ appropriately defined (see Section \ref{s:nec}). Note that neither $B_0$ nor $b$ appears.

The spectrum of the linearized operator $\mathcal{L}$ about the wave may thus be determined by linear
algebraic computations, via the dispersion relation for \eqref{Msys}, or
\be\label{Mdisp}
\hat \lambda(\sigma)\in \spec M(\eps,\hat \sigma), \qquad
M(\eps,\hat \sigma):= M_0  + M_1(\eps)\hat \sigma + M_2\hat \sigma^2
\ee
with $\hat \sigma $ denoting Fourier wave number, a {\it global two-parameter matrix perturbation problem.}

Recall that {spectral stability}, necessary for linearized stability, corresponds to 
\be\label{ss1}
\Re \hat \lambda \leq 0 \; \hbox{\rm for $\hat \lambda=\hat \lambda(\hat \s)\in \spec(M(\e,\hat \s))$.}
\ee
Meanwhile {diffusive spectral stability} in the sense of Schneider \cite{S1,S2}, sufficient for linearized and
nonlinear stability, corresponds in this context to 
\be\label{dss1}
\Re \hat \lambda \leq c(\eps)|\hat \sigma|^2/(1+|\hat \sigma|^2) \; \hbox{\rm for 
$\hat \lambda=\hat \lambda(\hat \s)\in \spec(M(\e,\hat \s))$.}
\ee

For general periodic waves of \eqref{bl}, Schneider's diffusive stability condition is defined in terms
of Bloch-Floquet  spectrum of the wave, as discussed in Section \ref{s:validation}, below, 
with the Fourier wave number $\hat \sigma$ replaced by a Bloch-Floquet number. See Section \ref{s:validation}.

\smallskip\noindent

{\bf Darcy reduction.} A natural further reduction of \eqref{ampeq} \cite{WZ3,HSZ} suggested by the 
singular structure of \eqref{ampeq}
is to make the Ansatz $B+f^{-1}h|A|^2=\tilde B_0$ for some fixed constant $\tilde B_0$. To relate the stability criteria of the Darcy reduction to the original model, we choose the constant $\tilde B_0$ by $\tilde B_0=B_0+f^{-1}h|A_0|^2$ for some fixed choice of periodic solution $(A_0e^{i(\kappa\hat x+\omega \hat t) },B_0)$ of \eqref{ampeq}. Note then that $\hat b$ as in \eqref{Darcy} is then a function of $\kappa$. Symbolically, we then obtain for $B=B(A)$
\be\label{Dansatz}
B(A)= B_0 + f^{-1} h |A_0|^2 - f^{-1} h |A|^2,  
\ee
canceling the singular term in \eqref{ampeq}(ii), and giving for $A$ the shifted complex Ginzburg-Landau equation
\be\label{Darcy}
\hbox{\rm $A_{\hat t}=aA_{\hat x\hat x}+\hat bA+\hat c|A|^2A$, where 
$\hat b= b+ d (B_0+  f^{-1} h A_0^2)$, \, $\hat c:= c-d f^{-1} h$,}
\ee
denoted the {\it Darcy} approximation by analogy to related approximations in hydrodynamics.
See, e.g., \cite{DJMR} for similar approximations in weakly nonlinear geometric optics and MHD.
We note for later reference the relations (cf. \eqref{shifted})
\be\label{rels}
\hat b=\tilde b+df^{-1}h A_0^2, \quad \hat c= c- df^{-1}h.
\ee

This does not represent an invariant subflow, but could be viewed, heuristically, as
an approximate ``slow manifold.'' 
We note that the corresponding fast flow
$$
( B + f^{-1} h |A|^2)_{\hat t}=  \e^{-1}\partial_{\hat x}f ( B + f^{-1} h |A|^2)_{\hat t}+ H.O.T.
$$
linearized about a constant state has for $\spec(f)$ real, dispersion relation
$\lambda(k)=i\spec{f}\eps^{-1} + O(1)$ with real part of order $1$ rather than $\eps^{-1}$, 
so that the slow manifold is typically {\it non-normally hyperbolic}.

Noting further that periodic solutions, with $|A|=A_0$, $B=B_0$ constant, are contained entirely within the
Darcy flow, we may think of these as lying within an exact slow manifold, with local dynamics about
the periodic solutions given to first order by those of the Darcy flow \eqref{Darcy}.
Hence, one may guess that {\it Darcy stability} of periodic waves, or stability as solutions of \eqref{Darcy} is
necessary- but not sufficient- for stability as solutions of \eqref{ampeq}.
Stability within the full model \eqref{ampeq} evidently involves also the question whether fast flow is
attracting or repelling, which, by non-normal hyperbolicity, may be expected to be somewhat delicate.

\subsection{Previous work}\label{s:previous}
The approximate solutions \eqref{eq:ansatz} may be shown in by now standard fashion to be 
close to exact periodic solutions of \eqref{bl}, with sharp error bounds; see \cite{WZ3} for further details.
Notably, this existence problem is {\it nonsingular}, as one can see by setting $|A|,B$ constant, thus
eliminating the $B$-equation, and noting that the resulting dimensional solution count remains correct.
Though we do not display it here, the analog of \eqref{ampeq} in the O(2)-symmetric reaction diffusion
case is nonsingular in both $A$ and $B$ to begin with, and so the issue of singularity does not ever arise.
For this reason we were able to treat in standard fashion not only existence but also stability completely for
that case in \cite{WZ3}, using classical methods of Mielke and Schneider \cite{M1,M2,S1,S2}, showing
under generic conditions that diffusive stability of periodic waves of \eqref{bl} is equivalent
to diffusive stability of the associated periodic waves \eqref{tw} of \eqref{ampeq} in \eqref{eq:ansatz}.

However, in the convective case modeled by \eqref{ampeq}, the stability problem is not only singular, but the
neutral eigenstructure of the Fourier symbol about the constant state $U_0$ features a Jordan block that would
at first sight appear to prevent completely an analytic Taylor expansion about zero frequency as is the
first step in the analysis of \cite{M1,M2,S1,S2}.
Using a key observation of \cite{JZ1,JZ2,JNRZ}, we were able to overcome this apparent obstacle and perform Taylor
expansion on a ball of $\eps$ times smaller order than in the standard case, thus obtaining in \cite{WZ3}
{\it necessary but not sufficient} conditions for diffusive spectral stability in the sense of Schneider \cite{S1,S1}
corresponding to low-frequency diffusive stability of the associated periodic waves \eqref{tw} of \eqref{ampeq} in \eqref{eq:ansatz}.

We mention also the earlier results of \cite{HSZ} of a different, \emph{bounded-time approximation} type, 
for specific decoupled ($d=0$) versions 
of \eqref{ampeq} arising in B\`enard-Marangoni and thin film flow, in which they showed that a corresponding
Darcy model for \emph{localized} rather than periodic solutions $(A,B)$ of \eqref{ampeq}, 
with the modified relation $B=- f^{-1} h |A|^2$, accurately predicts bounded-time behavior, in the sense that
localized solutions of the resulting Darcy model lie near exact solutions of the underlying PDE \eqref{bl}.
In the decoupled case considered there, it was clear that this ``approximate slow manifold'' was
attracting, suggesting at least heuristically formal link between bounded time approximation and 
time-asymptotic behavior.  A natural question posed in \cite{WZ3} was, in the fully coupled ($d\neq 0$)
and perturbed periodic case considered there, to what extent the Darcy model \eqref{Darcy} 
can shed light on \emph{time-asymptotic behavior} as studied here.

\subsection{Main results}\label{s:results}
The purpose of the present paper is to go beyond the restrictive regime imposed by Taylor expansion and 
complete a full linearized stability analysis, for both periodic solutions of the amplitude equations \eqref{ampeq} 
and the exact periodic solutions of \eqref{bl} that the former approximate, yielding {\it sufficient as
well as necessary conditions for diffusive stability.}

\medskip\noindent
\textbf{Main result 1.} Our first main result is, under mild genericity assumptions,
the derivation of $m+1$ simple necessary and sufficient conditions
for diffusive spectral stability \eqref{dss} of periodic solutions \eqref{tw} of the amplitude
equations \eqref{ampeq} for $0<\eps \leq \eps_0$ sufficiently small, where $m=\dim B$ is the number of mean modes.
See Proposition \ref{stabcrit} in the scalar case $m=1$ and Proposition \ref{stabcritvec} in the vector case $m>1$.
Here, $\eps_0$ may be chosen uniformly on compact parameter-sets for which the above-mentioned genericity
conditions are satisfied.

The first condition, associated with the neutral translational mode is
\be\label{tcond}
\kappa^2< \kappa_{stab}^2 := \frac{2\frac{\Re(\tilde b)}{\Re(c)}(\Im(a)\Im(\hat c)+\Re(a)\Re(\hat c) )}{ 4\Re(a)^2(1+\hat q^2)+2\frac{\Re(a)}{\Re(c)}\big(\Im(a)\Im(\hat c)+\Re(a)\Re(\hat c) \big)},
\ee
deriving from the well-known Eckhaus condition \cite[Eqn. (2.13)]{WZ3}:
\be\label{modeq}
0>\mu_t^0:= \frac{(2\kappa^2\Im \hat c ^2\Re a^2+A_0^2\Im a\Im \hat c\Re \hat c^2+
\Re a\Re \hat c^2(2\kappa^2\Re a+A_0^2\Re \hat c))2}{(A_0^2\Re \hat c^3)}
\ee
for the complex Ginzburg-Landau equation, with $b$, $c$ replaced by $\hat b$, $\hat c$:
that is, the translational stability condition for the Darcy approximation.

We note that \eqref{tcond} is {\it not} simply the $\kappa$ boundary for the Darcy equation with
$b$ and $c$ suitably replaced, since the relation
$A_0^2=\frac{\Re( \tilde b) - \Re(a)\kappa^2}{-\Re(c)}$ from \eqref{A0soln} involves the original variable
$\tilde b$ and $c$ rather than $\hat b$ and $\hat c$.
To put this a different way, from \eqref{rels}, we have that 
\be\label{subtle}
\hat b=\tilde b+df^{-1}h A_0^2
\ee
depends on $A_0^2$, so is not a fixed parameter. 
This subtle point is perhaps the main difference between the singular and the classical case.
In the decoupled case $d=0$, or for $\kappa=0$, this distinction disappears.

We refer to condition \eqref{tcond} as the {\it generalized Eckhaus criterion}.
The domain $\kappa^2<\kappa_{stab}^2$ is nonempty under the Benjamin-Feir-Newell criterion
	\be\label{BFN}\tag{BFN}
			\Im a\Im \hat c\Re \hat b\Re \hat c+\Re a\Re \hat b\Re \hat c^2>0;
	\ee
otherwise, diffusively stable waves do not exist.
This corresponds to \eqref{modeq} with $\kappa=0$, hence, unlike \eqref{tcond}, is identical
with the classical \eqref{BFN} condition with $b$, $c$ replaced by $\hat b$, $\hat c$.

The remaining $m$ conditions, associated with ``conserved,'' or ``mean'' modes are both
first-order and second-order. The first-order condition, automatic in the scalar case $m=1$, is
	\be\label{preccond}
	\hbox{\rm $\spec\Big( f- h \frac{\Re(d)}{\Re(c)}\Big)$ real}.
	\ee
We make the additional nondegeneracy assumption 
\be\label{splitass}
\hbox{\rm $\spec\Big( f- h \frac{\Re(d)}{\Re(c)}\Big)$ distinct,}
\ee
denoting by $\ell_j$ and $r_j$ associated left and right eigenvector pairs of $f-h\Re(d)/\Re(c)$. Then, the second-order
conditions are
	\be\label{ccond}
\mu_{c,j}^0:=  \ell_j \frac{h\Re(d) }{2A_0^2 \Re(c)}\Big( f- h \frac{\Re(d)}{\Re(c)}\Big)r_j <0,
\ee
with no conditions on $\kappa$. We refer to these as {\it auxiliary Benjamin-Feir-Newell criteria}
for the similar role they play to that of \eqref{BFN} in determination of stability.

{\it The scalar case ($m=1$).} Specialized to a single conservation law, $m=1$, as in the case of
example system \eqref{MOeq}, the results simplify considerably, reducing to the pair of conditions
\be\label{scalarconds}
\hbox{\rm $\Re(\mu_t)<0$ and $\Re(c)< \Re(\hat c)<0$.}
\ee
Noting, as observed just above, that $\mu_t<0$ agrees with the condition for translational stability
of the Darcy approximation, while $\hat c<0$ is necessary for supercriticality/fast mode stability of
the same, we see that, apart from information obtainable from the Darcy ``slow manifold'' approximation,
the only additional requirement is the condition $\Re (c)< \Re (\hat c)$, which we will see in the
computations later on is equivalent to stability of the transversal mean mode, or, heuristically,
attraction of the Darcy slow manifold.

\medskip\noindent
\textbf{Main result 2.} Our second main result is to connect stability of periodic solutions of the 
\eqref{ampeq} to stability of exact periodic solutions of \eqref{bl}
nearby the associated approximate solutions \eqref{eq:ansatz}, the existence of which was established in
\cite{WZ3}.
Namely, we show, under appropriate genericity assumptions, that for $0<\eps\leq \eps_0$ sufficiently small,
diffusive spectral stability in the sense of Schneider of exact periodic waves of \eqref{bl}
is equivalent to diffusive spectral stability \eqref{dss} of approximating periodic waves \eqref{tw}
of the associated amplitude equations \eqref{ampeq} derived in \cite{WZ3}; see Theorem \ref{main2}.
Here, again, $\eps_0$ may be chosen uniformly on compact parameter-sets for which the required genericity
conditions are satisfied.
Thus, {necessary and sufficient conditions for
diffusive spectral stability of $\eps$-amplitude exact periodic solutions of \eqref{bl}, with $0<\eps\leq \eps_0$
sufficiently small, are given by the Eckhaus and auxiliary Benjamin-Feir-Newell conditions \eqref{tcond}
and \eqref{preccond}-\eqref{ccond}.}

\medskip

{\it These complete the program of \cite{WZ1,WZ2,WZ3,Wh}, extending the theory of Eckhaus-Mielke-Schneider
for classical Turing bifurcation to the case of Turing bifurcations with conservation laws}, such
as occur in binary fluids and, most importantly for us, in biomorphogenesis models incorporating hydrodynamic
and mechanical effects, the latter having received considerable recent interest.

\medskip

\medskip\noindent
\textbf{Darcy approximation vs. the full model.} 
In passing, we answer the question posed in \cite{WZ3} of the relation between
the Darcy approximation \eqref{Darcy} and the full model \eqref{ampeq}, showing that, indeed,
Darcy stability is {\it necessary} (but not sufficient) for stability with respect to the full model 
of periodic solutions of \eqref{ampeq}, \eqref{Darcy}.
More precisely, we can see by expansion in $\eps$ with $\check \sigma:=\eps^{-1}\hat \sigma$ held fixed 
(carried out in
Section \ref{s:Dnec}) that Darcy stability is necessary for stability of dispersion relation \eqref{Mdisp}
with wave numbers in intermediate frequency range $1/C\leq |\s|\leq C$, corresponding heuristically
to behavior in the intermediate time range $0<t_0\leq t\leq 1$.
By contrast, the Eckhaus conditions above are found by expanding in $\s$ with $\eps$ fixed and bounded from zero.
But, either by abstract matching arguments or by direct comparison, we see further that the leading order
ultra-slow mode coefficient $\mu_t^0$ obtained by either of these methods is identical. 
That is, the (bounded-time) Darcy and (time-asymptotic) Eckhaus slow modes agree to leading order,
i.e., {\it the Darcy reduction behaves like an approximate slow manifold in both finite-time approximation 
and spectral sense}, similarly as shown in the classical (nonconservative) case by the analysis of Mielke
and Schneider.

The fact that the order of limits does not affect the value of $\mu_t$ can be understood, more generally,
from the fact proved in the course of the analysis of Section \ref{s:suff} (regions (ii)-(iii)),
that {\it the slow mode $\mu_t$ is in fact jointly analytic} in $\eps$, $\hat \sigma$ for $\eps$, $|\hat \sigma|$
sufficiently small, similarly as in the classical case; see Remark \ref{lambda_texp}.
By contrast, the fast modes $\mu_{c,j}$, as noted
in \cite{WZ3} are typically analytic in $\hat \sigma$ only on the reduced range $|\hat \sigma|\ll \eps$.

Concerning the values of $\mu_{c,j}$, the fact that $\Re(c)<0$, $\mu_t^0<0$, and $\mu_{c,j}^0<0$
are sufficient for stability, 
together with the facts that the Darcy stability conditions $\Re(\hat c)$, $\Re (\mu_t^0)$ are necessary 
for stability, implies that the former imply $\Re(\hat c)<0$ also in the vector case $m>1$,
as was seen by direct computation in the scalar case $m=1$.
However, in the vector case, we do not see any easy way to show this directly.

\subsection{Discussion and open problems}\label{s:disc}
System \eqref{ampeq} generalizes amplitude equations derived for particular examples by Matthews-Cox \cite{MC} in 
the O(2)-symmetric reaction diffusion case and H\"acker-Schneider-Zimmermann \cite{HSZ} in the SO(2)-symmetric
reaction convection diffusion case.
A major difference in the O(2) case is that the $\e^{-1}(f B_{\hat x}+h |A|^2_{\hat x})$ convection term
does not appear in \eqref{ampeq}(ii), canceling due to reflective symmetry.
This leaves a nonsingular reaction diffusion system in place of the coupled reaction diffusion/singular convection diffusion system of \eqref{ampeq}, for which the analytic issues are essentially different.
See \cite{MC,S,WZ3} for further discussion.

The singular mean-mode convection studied here was first pointed out in \cite{HSZ}
in the context of weakly nonlinear asymptotics for B\'enard-Marangoni convection and inclined-plane flow,
with rather different motivations and results from those of the present analysis.
More precisely, they sought to establish \eqref{eq:ansatz} together with (a localized version of) 
\eqref{Darcy} as an infinite-dimensional 
approximate center manifold, lying $\eps^2$-close to associated exact solutions in bounded time.
To this end, they derived mode-filtered equations \cite[Eqs. (22)-(23) and (33)-(34]{HSZ} for exact
solutions isolating neutrally growing (i.e., ``slow'') modes, corresponding under appropriate
rescaling to a forced version of \eqref{ampeq},
then showed by rigorous stability estimates that these remain $\eps^2$ close to \eqref{eq:ansatz}
under (localized) Darcy flow \eqref{Darcy} for time interval $0\leq t\leq C\eps^{-2}$.

Note that the time-interval $0\leq t\leq C\eps^{-2}$
corresponds to a bounded time interval $0\leq \hat t\leq C$ for \eqref{ampeq}.
Thus, the above stability estimates correspond to bounded-time, or well-posedness properties of
\eqref{ampeq}, rather than the asymptotic stability questions we consider here.
Moreover, the class of localized (roughly, $H^s(\R)$) solutions considered in \cite{HSZ} does not
include the periodic ones we study here.
In addition, the solutions they construct are for ``prepared'' initial data, with $B$
appropriately coupled to $A$, whereas we are concerned with stability with respect
to general data and perturbations.
On the other hand, the results of \cite{HSZ} apply to a much larger class of solutions, whereas ours apply only
to the periodic solutions \eqref{tw} associated with Turing bifurcation.

A further major technical difference between the analysis of \cite{HSZ} and that carried out here
is that, for the particular models considered in \cite{HSZ}, the associated amplitude equations 
\eqref{ampeq} feature vanishing coupling coefficients $d=0$ and simultaneously diagonal matrices $f$ and $e$.
Hence, the $A$ equation \eqref{ampeq}(i) {\it decouples as a standard complex Ginzburg-Landau equation},
and the spectra of $\mathcal{L}$ in \eqref{Msys} decouples into the well-known spectra for the linearized
complex Ginzburg-Landau equation plus that of the (already linear) decoupled $B$-equation
$B_{\hat t}=\e^{-1}f B_{\hat x}+e_B B_{\hat x\hat x}$, or 
\be\label{inspect}
\hat \lambda_j(\hat \sigma)= \e^{-1}i\hat \sigma f_j -\hat \sigma^2(e_B)_{j},
\ee
where $f_j$, $(e_B)_j$ denote the eigenvalues of $f$ and $e_B$.
Thus, time-asymptotic spectral stability is straightforward in that case, with the main technical issue
being {\it stiffness}, or accounting of transient effects in $B$, with principal behavior given 
by the usual complex Ginzburg-Landau equation in $A$.

Indeed, the singular $B$-equation was not included in \cite{HSZ} as part of the amplitude equations, but subsumed
in the underlying analysis supporting the description of behavior by
complex Ginzburg-Landau approximation, an aspect reflecting an important difference in point of view.
For the general amplitude equations \eqref{ampeq}, derived in \cite{WZ3} in the context of biomorphogenesis
models such as Murray-Oster and related equations for vasculogenesis \cite{AGS,SBP}, such a decoupling generically
does not occur, and the determination of spectral stability changes from inspection as in \eqref{inspect},
to the complicated singular two-parameter matrix perturbation problem studied here,
in terms of $\eps$ and the Fourier (Bloch) number $\sigma$. 
At the same time, mean modes $B$ are no longer necessarily decaying transients, but
through coupling interactions may play an important role in dynamics.
Thus, in the exposition of \cite{WZ3}, the mean modes $B$ are ``promoted'' to elements of the
formal amplitude equations, the latter becoming therefore {\it singular} with respect to $\eps$.


It is truly remarkable that in the originally-motivating context of Turing bifurcation, 
the complicated singular two-parameter stability problem for \eqref{ampeq} in the end yields simple stability
conditions \eqref{tcond} and \eqref{preccond}-\eqref{ccond} analogous to the classic Eckhaus and Benjamin-Feir-Newell
stability conditions for periodic solutions of the complex Ginzburg-Landau equation.
This both justifies the introduction of singular amplitude equations \eqref{ampeq}
and sets the stage for systematic exploration of biological pattern processes like vasculogenesis
to which they apply, starting from initiation at Turing bifurcation. 
{\it The latter program may be
expected to play the same powerful role in exploration of global bifurcation in biomorphogenesis 
as has the corresponding program in the classical pattern-formation case arising in elasticity, 
reaction diffusion, and myriad other settings.}

Many biomorphology models used in applications feature partial parabolicity and mixed hyperbolic parabolic type.
An important further open problem to extend our results for complete parabolicity to this more general domain,
as been done for related settings in \cite{JZN} and references therein.
A second very important followup problem is to actually carry out the above-described program for
specific biomorphology models, systematically determining associated Turing bifurcations and their stability;
that is, studying the practical {\it initiation problem} for periodic pattern formation with conservation laws.
In particular, it would be very interesting to carry out a full analysis
for the example model discussed in Appendix \ref{s:Teg}.
A third, more speculative followup, as suggested in \cite{WZ1,WZ3}, is to study modulation of these patterns
as a possible model for {\it emergent dynamics}; see \cite{JNRZ,MetZ}.

\subsubsection{Relations to thin-film flow}\label{s:thinrel}
More generally, our results in a sense also complete/complement the programs of \cite{JNRZ}, 
\cite{JNRZ2,BJNRZ}, and \cite{BJZ1}.  
The analysis of \cite{JNRZ} completely analyzes stability and asymptotic behavior given spectral diffusive
stability condition of Schneider, while those of \cite{JNRZ2,BJNRZ} verifies Schneider's condition in a 
certain degenerate small-amplitude limit arising in inclined shallow water flow.
It was pointed out in \cite{BJZ1} that nondegenerate Turing type bifurcations can also occur, as a complementary
and in general perhaps more frequent case, and the determination of their stability, even partially or
just heuristically, was cited as an important open problem.
Moreover, it was observed that the numerics associated with spectral stability were quite delicate
and computationally expensive for this problem.

Our present results both resolve this open problem in passing, giving simple conditions for 
diffusive spectral stability and justifying the heuristic approximation \eqref{ampeq},
and explain the observed delicacy of numerics.  For, the singularity $\eps^{-1}$ in \eqref{ampeq}
makes this a stiff system in the $\eps \to 0$ limit, for which the spectrum is inherently difficult to resolve.
Nonetheless, the equations are theoretically well-posed \cite{HSZ,WZ3}; the efficient resolution of the
associated time-evolution problem is thus an interesting open problem.  See \cite{BLWZ} for work in this direction.

In regard to the related earlier work on thin film flow, it is worth noting that rescaling 
$\check x=\eps \hat x$ (slow variable) removes $\eps^{-1}$ from \eqref{Msys}, giving form
$$
\mathcal{U}_t= \tilde M_1(\eps) \mathcal{U}_{\check x} + M_0 \mathcal{U}
+ \eps^2 M_2 \mathcal{U}_{\check x \check x},
$$
where $\tilde M_1(\eps)= \tilde M_1^0 + \eps \tilde M_1^1$, $\tilde M_1^j$ constant, that is,
a singularly perturbed relaxation system with vanishing viscosity $\eps^2\partial_{\check x}^2$,
the form ultimately studied in the two-parameter matrix bifurcation analysis of Section \ref{s:suff}.
It is interesting that a similar relaxation structure was encountered in the studies
\cite{JNRZ2,BJNRZ} of a quite different bifurcation occurring in the small-amplitude
limit for periodic waves in inclined shallow-water flow in the ultra-small frequency regime, 
and a reminiscent two-parameter bifurcation analysis successfully carried out.
See Remark \ref{remrmk} for further discussion.

\subsubsection{The Darcy approximation revisited}\label{s:Darcy}
We emphasize that the bounded-time estimates of \cite{HSZ}, though they apply to more general types of
solution, are not relevant to the special question of Turing patterns and their asymptotic stability,
and so give essentially complementary information. We discuss this point further in Section \ref{s:disc2}.
Likewise, the necessary conditions derived in \cite{WZ3} were theoretical and not
explicitly computed in the generic case considered here.
As we shall see in Section \ref{s:nec}, to go beyond establishing analyticity of neutral spectra
and actually compute the first nonvanishing real parts of the associated Taylor series
costs several further levels of expansion and substantial additional effort.
However, the end result is quite simple, consisting of the conditions for stability 
within the formal Darcy approximation, plus stability of transverse mean modes:
precisely as suggested by the heuristic picture of the Darcy model as an approximate slow manifold.

An interesting open problem related to the approximating manifold approach of \cite{HSZ} is to
extend their results to the generic case that coupling constant $d\neq 0$.  As pointed out
in \cite[\S 4.1]{WZ3}, a corresponding ansatz in that case is $ B= -h |A|^2/f$, canceling the
singular term in \eqref{ampeq}(ii), and giving for $A$ the modified complex Ginzburg-Landau equation
\eqref{Darcy} described in the introduction.
This gives a similar approximation error in the full equations \eqref{ampeq} to that given in \cite{HSZ}
for the complex Ginzburg-Landau ansatz in the mode-filtered equations, suggesting that the latter
should hold for the Darcy approximation in the general case $d\neq 0$ as well.
Likewise there was established in \cite[\S 4.1]{WZ3} a bounded-time/well-posedness result for \eqref{ampeq}
in class $H^s(\R)$, verifying in the generic case $d\neq 0$ the second main ingredient in the analysis
of \cite{HSZ}.

\subsubsection{Whitham modulation, amplitude equations, and the Darcy approximation}\label{s:disc2}
We close with some comments of a general nature contrasting the various results obtained here and in
\cite{HSZ} and \cite{JNRZ}.
Here, we determine diffusive spectral stability in terms of the $m+1$ neutral (translational and
conserved)
modes of the linearized amplitude equations \eqref{ampeq} about periodic waves \eqref{tw}, 
and show that this is equivalent to diffusive stability of bifurcating periodic solutions of \eqref{bl}
for $\eps>0$ sufficiently small.
The results of \cite{JNRZ} give for {\it fixed} $\eps>0$ that low-frequency diffusive spectral stability 
of periodic solutions is equivalent to diffusive stability of the $(m+1)$-dimensional 
{\it Whitham modulation equations} for \eqref{bl}, which in turn implies linearized and nonlinear stability
with respect to $H^s\cap L^1$ perturbations.
The latter estimates, however, are quite $\eps$-dependent, potentially blowing up as $\eps\to 0$.

The Whitham modulation equations, likewise, are associated with the same neutral translational and 
conserved 
modes as considered in our spectral stability analysis. It is natural to conjecture that these should
agree with the reduced equations that we obtain; at least the spectral expansions must agree to lowest
order in appropriate domains of common validity.
It would be very interesting to make this connection precise.
Likewise, a difficult but extremely interesting open problem would be to give a description of behavior of
perturbed Turing patterns like that of \cite{JNRZ} but uniformly valid in $\eps$.

These issues appear also related to the finite-time complex Ginzburg-Landau approximation of \cite{HSZ},
or, more generally, the Darcy approximation conjectured in Section \ref{s:Darcy},
governed asymptotically by a single modulation equation describing translation, or ``phase shift.''
In particular, one may ask whether the Darcy approximation could be not only valid for bounded time
and vanishing $\eps$, but also for small enough $\eps$ and time going to infinity, or both?

In this regard, we recall from \cite{JNRZ} the related result that time-asymptotic behavior
is dominated by phase shift precisely under a certain decoupling condition implying the absence of
a Jordan block in the eigenstructure of neutral modes,
corresponding to failure of our genericity condition \eqref{gencase}.
\be\label{decoup}
\Im(d)=\Re(d) \Im(c)/\Re(c),
\ee
Thus, it is apparently not possible that the Darcy approximation, accounting for phase shift,
can represent time-asymptotic behavior in the standard sense that remaining terms decay at faster
time-asymptotic rate.

We conjecture, rather, that derivative of the phase and transverse, mean modes, decay at the
same diffusive rate $t^{-1/2}$ in $L^\infty$ (see \cite{JNRZ}), but with coefficients of
the latter of order $\eps$, uniformly as $\eps\to 0$. Note that this agrees with the intuition
of Fourier modes with slow decay $e^{-\eps^2 k^2 t}$, corresponding to a heat equation
$u_t=\eps^2 u_{xx}$ decay for $L^1$ initial data at rate $(\eps^2 t)^{-1/2}=\eps^{-1}t^{-1/1}$.
If correct, this would yield that the Darcy approximation governs time-asymptotic behavior under general 
(not just ``prepared'' type) initial perturbations,
a conclusion that would evidently be very useful if true.
The resolution of this and the above related questions appear to be very interesting directions
for future investigation.

\medskip
{\bf Acknowledgement.} 
Thanks to Miguel Rodrigues, Olivier Lafitte, and Benjamin Melinand for their interest in the project and
related stimulating and helpful discussions.
Thanks also to Paul Milewski for helpful feedback and comments regarding framing of the discussion surrounding
\eqref{Darcy}.

\section{Necessity of Eckhaus conditions}\label{s:nec}
We now begin the main task of the paper, analyzing the linear stability problem by
reduction to constant coefficients followed by a 2-parameter matrix perturbation analysis.
In this section, we carry out for fixed $\eps$ a Taylor expansion in $\s$ about $0$, determining 
necessary conditions for stability. In the following section, we examine the remaining regions of $\s$-$\e$ space,
showing that these conditions are in fact sufficient for stability as well.

For clarity, we first carry out the analysis completely in the case $m=1$ of a single conservation law,
indicating after the straightforward 
generalization to the vector case $m\geq 2$.

\subsection{Reduction to constant coefficients}\label{s:redcc}
Following \cite{WZ3}, we perturb $(A,B)=(A_0e^{i(\kappa x-\omega t)},B_0)$ as $A\to (A_0+u+iv)e^{i(\kappa x-\omega t)}$ and $B\to B_0+w$, with $u,v,w$ real valued, giving linearized equations
\eqref{lineq} in constant coefficient form. 
Here, we have used the identity
\be\label{B0ident}
-i\omega U=-a\kappa^2U+bU+cA_0^2U+dB_0U
\ee
for $U:=u+iv$, coming from satisfaction of \eqref{ampeq} by the periodic solution.

To study stability, we compute the dispersion relations associated with \eqref{lineq}.
Namely, writing $(u,v,w)=(u_0,v_0,w_0)e^{i\hat \s x-\l t}$ we arrive at the eigenvalue problem
\be\label{eval}
\lambda (u_0,v_0,w_0)^T= M(\e,\hat\s)(u_0,v_0,w_0)^T,
\ee
where
\ba\label{eq:truncatedmatrix}
M(\e, \hat \s)&=
\bp  2A_0^2\Re(c) & 0& A_0\Re(d)\\ 2A_0^2\Im(c) & 0 & A_0\Im(d)\\ 0 & 0  & 0\ep 
+i\hat \s\bp -2\kappa \Im(a)& -2\kappa \Re(a) & 0\\
+2\kappa \Re(a)& -2\kappa \Im(a) & 0\\ 2A_0h \e^{-1}+2A_0\kappa \Im(g) & 0 & \e^{-1}f\ep\\
&\quad
-\hat \s^2\bp \Re(a)& -\Im(a) &0\\ \Im(a)& \Re(a) &0\\ 2A_0\Re(g) & 2A_0\Im(g) & e_B \ep
=: M_0 + \hat \sigma M_1 + \hat \sigma^2 M_2.
\ea

{\it Spectral stability}, necessary for linearized stability,
corresponds to 
\be\label{ss}
\Re \lambda \leq 0 \; \hbox{\rm for $\lambda=\lambda(\hat \s)\in \spec(M(\e,\hat \s))$.}
\ee
{\it Diffusive spectral stability} in the sense of Schneider \cite{S1,S2}, sufficient for linearized and
nonlinear stability, corresponds to 
\be\label{dss}
\Re \lambda \leq c(\eps)|\hat \sigma|^2/(1+|\hat \sigma|^2) \; \hbox{\rm for 
	$\lambda=\lambda(\hat \s)\in \spec(M(\e,\hat \s))$.}
\ee

\br\label{rmkrmk}
We note the remarkable fact that $B_0$ does not appear in the linearized equations \eqref{lineq},
having been removed using \eqref{B0ident}.
Paradoxically, it thus appears not to enter stability considerations, despite its role through \eqref{B0dom}
in the existence theory.
The resolution of this paradox is that the restriction \eqref{B0dom} on existence
stems from our convention \eqref{supercrit}, which will be seen to be necessary for stability.
But apart from this indirect connection, indeed $B_0$ disappears in stability computations.
\er

\subsection{Taylor expansion of the dispersion relations}\label{s:disp}
Our first task, carried out in the remainder of this section,
is to compute an analytic expansion for the neutral dispersive curves about the origin,
thereby determining {\it necessary conditions} for spectral and diffusive spectral stability.
Our analysis follows to a large extent the corresponding analysis in \cite{WZ3}, in which a
Taylor expansion was shown to exist but not explicitly computed.
Here, we go substantially further, however,
computing the Taylor coefficients to sufficiently high order in $\eps$
to obtain explicit low-frequency stability conditions.

To minimize the number of fractions in the following computation, define two parameters
\be\label{p_q}
p:=-\frac{\Re(d)}{2A_0\Re(c)}\quad\quad q:=-\frac{\Im(c)}{\Re(c)}.
\ee
so that
\be\label{pq}
pq:=\frac{\Re(d) \Im(c)}{2A_0\Re(c)^2}.
\ee

\begin{remark}
	We recall from \cite{WZ3} and \eqref{A0soln} that $p$ can be identified as $$p=\frac{\d A_0}{\d B_0},$$ and that $q$ is normalized measure of the nonlinear dispersion in the $A$-equation. Hence, in the vectorial case $p$ will become a row vector and $q$ will remain a scalar.
\end{remark}

\subsubsection{Preliminary diagonalization}\label{s:prelim}
At $\s=0$, we have the (generalized) eigenvectors of $M(\e,0)$ given by
\be\label{eq:truncatedeigenvectors}
R_s:=\bp 1 \\ -q \\ 0\ep, \qquad
R_t:=\bp 0 \\ 1 \\ 0\ep, \quad\quad R_c:=\bp p \\ 0 \\ 1\ep. 
\ee
In addition, we have corresponding left (generalized) eigenvectors
\be\label{eq:truncatedleft}
L_s:=\bp 1 & 0 & -p \ep, \qquad L_t:=\bp q & 1 & -pq\ep,  \qquad L_c:=\bp 0 & 0 & 1 \ep, 
\ee
where $(L_s,R_s)$ are the left/right eigenvector pair for the unique stable eigenvalue and $(L_t,R_t)$, $(L_c,R_c)$ 
are associated to the zero eigenvalue. 
We note that $L_t$ and $R_c$ are generalized left and right eigenvectors 
respectively, provided that $\Re(c)\Im(d)-\Im(c)\Re(d)\not=0$. 

Following \cite{WZ3}, this gives an initial block-diagonalization
\be\label{M0}
\tilde M_0(\eps):=
\bp L_s\\L_t\\L_c\ep M_0(\eps) \bp R_s & R_t & R_c\ep=
\bp m & 0\\ 0 & \hat M_0(\eps)\ep,
\ee
where
\be\label{m's}
m_0:= 2A_0^2\Re(c), \qquad 
\hat M_0:= 
\bp  0 & A_0 ( \Im(d)+ q \Re(d))\\  0 & 0 \ep.
\ee

Similarly, we find
\ba\label{M1}
\tilde M_1(\eps)&= i\e^{-1}\bp -2A_0ph & 0 & -p(f+2A_0hp)\\
-2A_0pqh & 0 & -pq(f+2A_0hp)\\
2A_0h & 0 & f+2A_0hp\ep\\
&\quad+2i\kappa \bp -\Im(a)-A_0p\Im(g)+q\Re(a) & -\Re(a) & -p(\Im(a)+A_0p\Im(g) )\\
\Re(a)-A_0pq\Im(g)+q^2\Re(a) & -q\Re(a)-\Im(a) & p(-q\Im(a)+\Re(a)-A_0pq\Im(g) )\\
A_0\Im(g) & 0 & A_0\Im(g)p \ep\\
&=\bp m_1 (\e)& s_1(\e)\\ s_2(\e) & \hat M_1(\eps)
\ep,
\ea
where
\ba\label{m0}
m_1(\e) &= i\big( -2\kappa \Im(a) -2A_0\kappa p\Im(g) - 2\eps^{-1}A_0ph + 2\kappa \Re(a) q\big)\\
&=  - 2i\eps^{-1}A_0ph + H.O.sT. \, ,
\ea
\ba\label{hatM1}
\hat M_1(\e) &=i\bp -2\kappa(\Re(a)q+\Im(a) ) & -pq\e^{-1}(f+2A_0hp )+2p\kappa\big(-\Im(a)q+\Re(a)-A_0pq\Im(g)\big)\\
0 & \e^{-1}(f+2A_0hp)+2A_0\kappa\Im(g)p\ep\\
&= i\bp -2\kappa(\Re(a)q+\Im(a) ) & -pq\e^{-1}(f+2A_0hp )\\
0 & \e^{-1}(f+2A_0hp)\ep  + H.O.T.\, ,\\
\ea
\ba\label{s1}
s_1(\e)&=i \bp -2\kappa \Re(a) &-2\kappa\Im(a)p- 2\kappa p A_0\Im(g) p -\eps^{-1}p( f+2A_0hp)\ep\\
&= -i\eps^{-1} \bp 0 & p( f+2A_0hp) \ep +
i\bp -2\kappa \Re(a) & O(1)\ep   \, , 
\ea
and
\ba\label{s2}
s_2(\e)&= 
i \bp -2\e^{-1}A_0pqh+2\kappa\big(\Re(a)-A_0pq\Im(g)+q^2\Re(a) \big) \\ \eps^{-1} 2A_0 h+ 2A_0 \kappa \Im (g))\ep \\ 
&= 
2A_0 i\eps^{-1} \bp    -pqh \\ h \ep + i\bp O(1) \\ 2A_0 \kappa \Im (g))\ep \, .
\ea

\begin{remark}
	In anticipation of the vectorial case, $m\geq 2$, we've carefully placed the $h$'s, $p$'s and $\Im(g)$'s, so that the corresponding expressions make sense when $p$ is an $1\times m$ row vector, $f$ is an $m\times m$ matrix, and $h$, $g$ are $m\times 1$ column vectors.
\end{remark}

At next order, we obtain, likewise,
\be\label{M2}
\tilde M_2(\eps)= -\bp m_2 & X_1\\ X_2 &\hat M_2\ep,
\ee
where
\ba\label{WXYZ}
m_2 &= \bp Re(a)+\Im(a)q -2A_0p(\Re(g)-\Im(g)q) \ep, \\
X_1&=\bp \Im (a) - 2A_0p\Im(g)& p\Re(a)-2A_0p\Re(g)p -pe_B \ep,\\
X_2&=\bp \Im(a)-q^2\Im(a)  -2A_0pq\big( \Re(g)-\Im(g)q\big)\\  2A_0\big(\Re(g) - \Im(g)q\big) \ep, \\
\hat M_2 &= \bp -q\Im(a) + \Re(a) -2A_0pq\Im(g) & \big(q\Re(a)+\Im(a)-2A_0pq\Im(g)\big)p -pqe_B\\
2A_0 \Im(g) & 2A_0 \Re(g)p + e_B \ep.
\ea

\subsubsection{Some organizing discussion}
As described in \cite{Wh,WZ3}, the eigenvector $R_t$ corresponds to a ``translational mode'' coming
from rotational invariance in the complex Ginzburg-Landau equation for $A$,
a consequence of translational invariance in the underlying PDE,
while the eigenvector $R_c$ corresponds to a ``conservative mode'' associated with the conservation
law for $B$.
The mode $R_s$ corresponds to a (weakly) ``stable mode'' coming from supercritical Hopf bifurcation,
as is familiar from the classical complex Ginzburg-Landau equation arising in the case without conservation 
laws.

These classifications may be helpful both in following the analysis, and in connecting to other work.
For example, the neutral $\hat N$ block may be seen to correspond with the more general Whitham
modulation approximation about arbitrary-amplitude waves, which, as described in \cite{JNRZ},
is precisely an expansion in neutral translational and conservative modes.
As noted in \cite{JZ1,JZ2,BJZ2} in the context of the Whitham equations, one finds from 
study of the corresponding existence problem that the lowest-order part $\hat N(0)$ generically 
consists of a nontrivial Jordan block, as we see also here, a consequence of the conservation
structure of the equations. 

Here as there, this leads to much of the difficulty in the 
study of stability; indeed, it is counterintuitive at first glance that spectra should expand
analytically in $\hat \sigma$ instead of in a Puiseaux expansion in $\sqrt{\hat \sigma}$, as we shall see.
The reason for this, as pointed out in \cite{JZ2}, is that the	lower lefthand entry of the next-order 
block 
$\hat N_{1}$ 
opposite to the nonvanishing entry in the Jordan block necessarily vanishes,
also by conservation principles, so that the matrix perturbation problem can be converted by a ``balancing''
transformation to a standard matrix perturbation problem without a nontrivial Jordan block.
The latter procedure is described in Section \ref{s:balance} below.

Alternatively, looking at the exact spectral expansion of neutral modes
for the underlying PDE problem, corresponding to variations along the manifold of periodic traveling-wave
solutions, one finds \cite{JZ2} that there is a Jordan block whenever speed of profiles is nonstationary,
and that the corresponding generalized right zero eigenfunction lies in a Jordan chain over the genuine
eigenfunction consisting of the spatial derivative $\partial_x \bar u$ of the background 
traveling wave.  Meanwhile, associated left eigenfunctions are constant functions $\ell\equiv \const$, 
by conservation form;
the lower lefthand corner of the Jordan block, corresponding to 
$\langle \ell, \partial_x \bar u \rangle= \ell\cdot \bar(x)|_0^X$, $X$ the period
of the background wave, thus vanishes by periodicity of $\bar u$.

That is, {\it the same conservation structure leading to appearance of a Jordan block enforces constraints on
	the perturbation leading to analytic expansion nonetheless}: i.e., ``the disease is also the cure.''
The importance of analytic vs. Puiseaux expansion is that the latter involves ill-conditioning of 
the spectral expansion/diagonalization procedure in the form of blowup of associated eigenprojectors.
In the context of \cite{JZ2}, this would have wrecked the linearized stability estimates, leading to transient
time-algebraic growth instead of decay.  Here, it would have increased sensitivity to higher-order errors,
preventing our estimates from closing.

\subsubsection{First-order diagonalization}\label{s:first}
Following \cite{MZ1,MZ}, we apply the method of ``successive diagonalization'', 
defining a coordinate change 
\ba\label{calT}
\mathcal{T}=\bp 1 & 0\\ \hat \sigma t_2 &\Id_2\ep \bp 1 & \hat \sigma t_1\\ 0 &\Id_2\ep
&=\bp 1  & \hat \sigma t_1\\ \hat \sigma t_2&\Id_2+  \hat \sigma^2 t_2t_1\ep, \\
\mathcal{T}^{-1}= \bp 1 & - \hat \sigma t_1\\0 &\Id_2\ep \bp 1 & 0 \\ - \hat \sigma t_2 &\Id_2\ep &=
\bp 1  +  \hat \sigma^2 t_1 t_2 & - \hat \sigma t_1\\- \hat \sigma t_2&\Id_2\ep,
\ea
with 
$t_1 \hat M_0 - m_0 t_1= -s_1$, $\hat M_0 t_2 - t_2 m_0 = s_2$, or equivalently
\ba\label{t's}
t_1&= -s_1( \hat M_0 - m_0)^{-1} ,\\
t_2  &= (\hat M_0-m_0)^{-1} s_2,
\ea
$\hat M_0$, $m_0$ as in \eqref{M0}, in order to diagonalize to order $O( \hat \sigma)$.
With this choice, we obtain
\be\label{N}
N( \hat \sigma, \eps):= \mathcal{T}\tilde M\mathcal{T}^{-1}=N_0(\eps)+ \hat \sigma N_1(\eps) 
+  \hat \sigma^2 N_2(\eps) +  \hat \sigma^3 N_3( \hat \sigma)  + O( \hat \sigma^4),
\ee
where
\ba\label{dN's}
N_0=\tilde M_0 &= \bp m_0 &0\\ 0& \hat M_0\ep, \qquad
N_1= \bp m_1 &0\\ 0& \hat M_1 \ep, \\
N_2&= \bp * &*\\ *& \hat N_2 \ep, \qquad
N_3= \bp * &*\\ * & \hat N_3\ep, 
\ea
with 
\ba\label{N2}
\hat N_2&= -\hat M_2 + (t_2s_1- s_2t_1) + t_2t_1 \hat M_0 - t_2 m_0 t_1 \\
&= -\hat M_2 + (t_2s_1- s_2t_1) -t_2 s_1 \\
&= -\hat M_2 - s_2t_1 \\
&= -\hat M_2 +  s_2s_1(\hat M_0-m_0 )^{-1} \\
\ea
and
\ba\label{N3}
\hat N_3&=  -t_2 m_1 t_1 + t_2t_1\hat M_1-t_2X_1+X_2t_1= t_2t_1(\hat M_1-m_1)-t_2X_1+X_2t_1\\
&=
-(\hat M_0-m_0)^{-1}s_2s_1(\hat M_0-m_0)^{-1}(\hat M_1-m_1)-t_2X_1+X_2t_1.
\ea
Here, the recurring term $s_2s_1$ by direct computation is
\ba\label{s2s1}
s_2s_1&= 2\eps^{-2}A_0\bp 0 & -qphp(f+2A_0hp) \\0 & hp(f+2A_0hp)\ep + \eps^{-1} \bp -4A_0\kappa\Re(a)pqh & O(1)\\ 
4A_0\kappa h \Re(a) & O(1)\ep.
\ea

Applying standard spectral perturbation theory \cite{K}, we have that there exists an {\it exact}
analytic in $\hat \sigma$ change of coordinates decoupling $n$ and $\hat N$ blocks for which the above approximate
series are correct to $O(\hat \sigma^4)$ error. We thus have that the ``stable'' eigenvalue $\lambda_s$ corresponding
to the $n$ block satisfies
\be\label{muc}
\lambda_s(\eps)= m_0 + O(\hat \sigma)= 2A_0^2\Re(c)+ O(\hat \sigma).
\ee
The ``translational'' and ``conservative'' eigenvalues $\lambda_t$ and $\lambda_c$ can then be determined
by analysis of the reduced matrix perturbation problem
\be\label{hatNprob}
\hat N(\eps, \hat \sigma):= \hat N_0(\eps) + \hat \sigma \hat N_1(\eps)
+ \hat \sigma^2 \hat N_2 (\eps) + \hat \sigma^3 \hat N_3 (\eps).
\ee

Note, by supercriticality condition \eqref{supercrit}, that $m_0=2A_0^2\Re(c)<0$, so
that indeed the ``stable'' eigenvalue is stable.
This observation indirectly justifies our convention in assuming \eqref{supercrit} in the existence
problem, since otherwise the resulting solutions would be {\it exponentially unstable} by \eqref{muc}.

\begin{remark}
	The analogous computation in \cite{WZ3} uses a more traditional spectral perturbation argument. In order to connect this argument to that argument, we note that the first order diagonalization can also be interpreted as finding the first correctors of the left/right eigenvectors $(L_s,R_s)$ associated to the stable eigenvalue $\lambda_s(0)=m_0$, as we recall from \cite{K} that simple eigenvalues have smooth eigenvectors.
\end{remark}

\subsubsection{Balancing transformation}\label{s:balance}
In the generic case 
\be\label{gencase}
\Im(d)\neq \Re(d) \Im(c)/\Re(c),
\ee
$\hat M_0$ takes the form of a (nonzero multiple of a) Jordan block
\ba\label{Jblock}
\hat M_0= rJ; \qquad J=\bp 0 & 1 \\ 0 & 0\ep, \quad r:= 
A_0 (\Im(d)- \Re(d) \Im(c)/\Re(c))\neq 0,
\ea
giving
\ba\label{Jinv}
(\hat M_0-m_0)^{-1}&= \big(-m_0(I-(r/m_0)J)\big)^{-1}= -m_0^{-1}\big(I+(r/m_0)J\big)
\ea
and, by \eqref{s2s1}
\ba \label{N2block}
s_2s_1(\hat M_0-m_0)^{-1}&= -s_2s_1 m_0^{-1}\big(I+(r/m_0)J\big).\\
&=-\frac{1}{m_0}s_2\bp -2i\kappa\Re(a) & -i\e^{-1}p(f+2A_0hp)+O(1)\ep\bp 1 & \frac{r}{m_0}\\ 0 & 1\ep\\
&=-\frac{1}{m_0}s_2s_1+\bp 0 & O(\e^{-1})\\0 & O(\e^{-1})\ep,\\
\ea \label{t1t2}
\ba \label{Mhatinv}
(\hat M_1-m_1)&= i\eps^{-1} 
\bp  2A_0ph & -pq(f+2A_0hp )\\ 0 & f+2A_0h p+2A_0ph \ep  + O(1), 
\ea
and
\ba \label{N3block}
(\hat M_0-m_0)^{-1} & s_2s_1(\hat M_0-m_0)^{-1}(\hat M_1-m_1)=
i\frac{\e^{-2}}{m_0^2}\bp 1 & r/m_0\\ 0 & 1\ep\\
&\quad\times \bp -4A_0\kappa\Re(a)pqh & -2A_0qphp(f+2A_0hp)\e^{-1} \\4A_0\kappa h \Re(a) & 2A_0hp(f+2A_0hp)\e^{-1}\ep\\
&\quad\times \bp 1 & r/m_0\\ 0 & 1 \ep\bp  2A_0ph & -pq(f+2A_0hp)\\ 0 & f+2A_0hp+2A_0ph \ep+O(\e^2)\\
&\quad=i\frac{\e^{-2}}{m_0^2}\bp O(\e^{-1}) & O(\e^{-1})\\ 8A_0^2\kappa\Re(a)hph & O(\e^{-1})\ep+ H.O.T.
\ea

\br
The distinction between $2A_0hp$ and $2A_0ph$ in \eqref{Mhatinv} is being kept in order to more easily adapt this calculation to the systems case.
\er

We note here that the crucial fact that makes this entire calculation of dispersion relations possible is that, while $\hat N_3$ is of order $\e^{-3}$, $\hat N_3$ is upper triangular to leading order. Hence the bottom left corner of $\hat N_3$ is of order $\e^{-2}$, matching the order of $\hat N_2$. The importance of this observation comes from the ``balancing'' transformation which informally takes the top right corner and moves it up an order and similarly takes the bottom left corner and moves it down an order. We also note here that \eqref{Mhatinv} is upper triangular, and that the $O(1)$ terms are proportional to $\kappa$.\\

Thus, 
\ba\label{N1again}
\hat N_1=\hat M_1= i\bp -2\kappa(\Re(a)q+\Im(a) ) & -pq\e^{-1}(f+2A_0hp)\\
0 & \e^{-1}(f+2A_0hp)\ep  + H.O.T.\, ,\\
\ea
\ba\label{comp2}
\hat N_2 &= -\hat M_2 +  s_2s_1(\hat M_0-m_0 )^{-1} \\
&= -m_0^{-1}\e^{-1}\bp -4A_0\kappa\Re(a)pqh & -2A_0qphp(f+2A_0hp)\e^{-1} \\4A_0\kappa h \Re(a) & 2A_0hp(f+2A_0hp)\e^{-1}\ep+ H.O.T.\ ,
\ea
and
\ba\label{comp3}
\hat N_3 &= -(\hat M_0-m_0)^{-1}s_2s_1(\hat M_0-m_0)^{-1}(\hat M_1-m_1 )-t_2X_1+X_2t_1
= \bp * & *\\ O(\eps^{-2}) & * \ep.
\ea

\br\label{noJordanrmk}
Evidently, for the decoupled case $d=0$ considered in \cite{HSZ}, \eqref{gencase} fails,
hence that case is degenerate from this point of view.
The case that \eqref{gencase} fails but $d\neq 0$ is treated in \cite{WZ3}.
\er

To remove the Jordan block, following \cite{MZ1,MZ}, we perform the ``balancing'' transformation 
$\hat N\to O:= \mathcal{S}\hat N\mathcal{S}^{-1}$, where 
\be\label{bal}
\mathcal{S}:= \bp i\hat \sigma & 0\\ 0 & 1 \ep, \quad
\mathcal{S}^{-1}= \bp (i\hat \sigma)^{-1} & 0\\ 0 & 1 \ep,
\ee
yielding
\be\label{O}
O(\eps,\hat \sigma)= \hat \sigma O_1 + \hat \sigma^2 O_2 + O(\hat \sigma^3),
\ee
with 
\ba\label{O's}
O_1 &= i \bp 
-2\kappa(\Re(a)q+\Im(a) ) & r\\
4\kappa  \eps^{-1}m_0^{-1}  A_0 h\Re(a) & \eps^{-1}( f+2A_0hp)\ep  + H.O.T. \\
&=
i \bp  0 & 0\\ 4\kappa m_0^{-1}\eps^{-1}  A_0 h\Re(a) & \eps^{-1}(f+2A_0hp)\ep  + H.O.T.\, ,\\
O_2 &= \bp 4\e^{-1}m_0^{-1}A_0\kappa \Re(a)pqh & pq \eps^{-1} (f+2A_0hp )\\
-8\e^{-2}m_0^{-2}A_0^2\kappa (hph\Re(a)) & -\eps^{-2}m_0^{-1} 2A_0 hp( f+2A_0hp)\ep+H.O.T.
\ea

This can now be expanded up to order $\hat \sigma^2$ by standard perturbation of distinct eigenvalues, as
the eigenvalues of $O_1$ are $\sim 1, \eps^{-1}$ and split for $\eps>0$ sufficiently small under the 
generic condition
\be\label{fluxcond}
f+ 2A_0h p \neq 0.
\ee
Moreover (by splitting), we may conclude {\it analyticity in $\hat \sigma$} on some sufficiently small ball.

Namely, taking the eigenvectors to lowest order in $\eps$ of $O_1$, we have 
\ba\label{desing}
\ell_t&= \bp 1 & 0\ep + O(\eps),\quad \ell_c= \bp 4\kappa m_0^{-1}\Re(a) A_0 (f+2A_0hp)^{-1}h  & 1\ep+ O(\eps)\\
r_t&= \bp 1 \\ -4\kappa m_0^{-1} A_0 \Re(a)(f+2A_0hp)^{-1}h  \ep + O(\eps),\quad r_c=\bp 0 \\ 1\ep + O(\eps),
\ea
giving the analytic expansion
\ba\label{lamex}
\lambda_t&= \alpha_t \hat \sigma + \mu_t \hat \sigma^2 + O(\hat \sigma^3),\\
\lambda_c&= \alpha_c \hat \sigma + \mu_c \hat \sigma^2 + O(\hat \sigma^3),
\ea
where $\alpha_j$ are pure imaginary
\ba\label{alpha's}
\alpha_t&= \ell_t O_1  r_t=  O(1)  \, ,\\ 
\alpha_c&= \ell_c O_1  r_c=  i \eps^{-1}( f+2A_0hp) +O(1),
\ea
are {\it pure imaginary} and
\ba\label{genmu's}
\mu_t&= \ell_t O_2  r_t= O(\eps^{-1})=: \eps^{-1} \mu_t^0 + O(1)  \, ,\\ 
\mu_c&= \ell_c O_2  r_c= -2\eps^{-2} m_0^{-1}A_0  hp( f+2A_0hp) +O(\eps^{-1}) =: \eps^{-2} \mu_c^0 + H.O.T. \, .
\ea
Thus, {\it the signs of the real parts of $\lambda_t$ and $\lambda_c$ are determined
	by the signs of $\Re\mu_t$ and $\Re \mu_c$}.\\

\br\label{splitrmk}
In the vectorial case $m>1$, the lower righthand block of $O_1$ is an $m\times m$ matrix with
leading order $\eps^{-1}( f+2A_0hp)i$, hence requires the additional, first-order stability condition
\eqref{preccond} in order to ensure that its spectra are pure imaginary. The second order stability conditions also require modification, see Section \ref{s:ext} for details.
\er

\br
In the systems setting, $\ell_c\to\ell_{c,i}$ with first entry corresponding to the $i$-th entry of the vector $4\kappa m_0^{-1}\Re(a) A_0 (f+2A_0hp)^{-1}h$ and $1$ replaced with the $i$-th standard basis of $\RR^m$, thought of as a row vector. Similarly, $r_c\to r_{c,i}$ with $1$ replaced with the $i$-th standard basis element of $\RR^m$, now thought of as a column vector. Finally, we note that to complete the eigenvectors we have that $\ell_t\to \bp 1 & 0\ep$ and $r_t\to \bp 1 \\ -4\kappa m_0^{-1}\Re(a) A_0 (f+2A_0hp)^{-1}h\ep$.
\er

However, our expansion is not fine enough to determine $\mu_t$, or even its order, i.e., whether
$\mu_t^0$ is vanishing or nonvanishing.
This could be remedied by applying the method of repeated diagonalization in powers of $\eps$ to
expand \eqref{desing} to as many orders as required to obtain a nonvanishing coefficient in the expansion
of $\mu_t$. In \cite{WZ3}, $\mu_t$ was computed through a Kato-style expansion when $r=0$, i.e. $\hat{M}_0$ is not a Jordan block. In that case, $\mu_t^0=0$, or equivalently, $\mu_t(\e)=O(1)$.\\

While a similar expansion is possible here, it is significantly more complicated due to $\a_t$ and $\mu_t$ being primarily determined by error terms in the $O_j$. Instead, we elect to work with \eqref{eq:truncatedmatrix} directly and use the method of matched asymptotics. It is at this point that our argument specializes to the case of a single conservation law, see Section \ref{s:sysmatched} for a variation on this argument for the case $m\geq 2$. Suppose that \eqref{eq:truncatedmatrix} has an eigenvalue of the form $\l_t(\hat{\s})=i\a_t\hat\s+\mu_t\hat\s^2$ with $\a_t=O(1)$ and $\mu_t=O(\e^{-1})$. Recalling $M(\e,\hat\s)$ from \eqref{eq:truncatedmatrix}, we find
\begin{equation*}
	\begin{aligned}
		M(\e,\hat\s)&=-\hat\s^2\bp \Re(a)& -\Im(a) &0\\ \Im(a)& \Re(a) &0\\ 2A_0\Re(g) & 2A_0\Im(g) & e_B \ep
		+i\hat\s\bp -2\kappa \Im(a)& -2\kappa \Re(a) & 0\\
		+2\kappa \Re(a)& -2\kappa \Im(a) & 0\\ 2A_0h \e^{-1}+2A_0\kappa \Im(g) & 0 & \e^{-1}f\ep\\
		&\quad+\bp  2A_0^2\Re(c) & 0& A_0\Re(d)\\ 2A_0^2\Im(c) & 0 & A_0\Im(d)\\
		0 & 0  & 0\ep.
	\end{aligned}
\end{equation*}
Let us make the important observation that because $\l_t=O(\hat\s)$, the second column and third row of $M(\e,\hat\s)-\l(\hat\s)Id$ are both proportional to $i\hat\s$, with the intersection being order $\hat\s^2$, and so can be divided out. Hence, to determine $\a_t$, we need to work with the now $O(1)$ terms in the characteristic polynomial given by
\be\label{matchedO1}
P_0:=\det\bp 2A_0^2\Re(c) & -2\kappa\Re(a) & A_0\Re(d)\\ 2A_0^2\Im(c) & -2\kappa\Im(a)-\a_t & A_0\Im(d)\\ 2A_0h\e^{-1}+2A_0\kappa\Im(g) & 2A_0\Im(g) & \e^{-1}f-\a_t\ep.
\ee
Expanding $P_0$ along the first column gives us
\ba\label{P0exp}
P_0&=2A_0^2\Re(c)\a_t^2+\big(\e^{-1}2A_0^2(h\Re(d)-f\Re(c))+O(1) \big)\a_t\\
&\quad+\e^{-1}4A_0^2\kappa\big(f\Re(a)\Im(c)-f\Im(a)\Re(c)+h\Im(a)\Re(d)-h\Re(a)\Im(d) \big)+O(1)\\
&\quad=:c_2\a_t^2+c_1\a_t+c_0.
\ea
Now, if $\l_t$ is to be an eigenvalue, we need $P_0=0$. Applying the quadratic formula to \eqref{P0exp}, we find that the two roots of $P_0$ are
$$
\a=\frac{-c_1\pm\sqrt{c_1^2-4c_2c_0}}{2c_2}.
$$
As $c_2=O(1)$ and $c_1,c_0=O(\e^{-1})$, we see that the $c_1^2$ term in the square root dominates. Hence applying the binomial theorem we find the two roots are
$$
\a_-=-\frac{c_1}{c_2}+O(1),\qquad \a_+=-\frac{c_0}{c_1}+O(\e).
$$

Recalling the expressions in \eqref{P0exp}, we find that
$$
\a_-=\e^{-1}\Big(f-\frac{h\Re(d)}{\Re(c)} \Big)+O(1).
$$
Upon recalling the definition of $p$ from \eqref{p_q}, we notice that $\a_-=\a_c$ for $\a_c$ as in \eqref{alpha's}. Evidently, we then conclude that $\a_+=\a_t$. Returning to \eqref{P0exp}, we find that
\be\label{alphatmatched}
\a_t=-2\kappa\frac{f\Re(a)\Im(c)-f\Im(a)\Re(c)+h\Im(a)\Re(d)-h\Re(a)\Im(d)}{h\Re(d)-f\Re(c)}+O(\e).
\ee
On the other hand, using \eqref{desing}, \eqref{O's}, and a Kato-style expansion, we can also find
\be\label{alphatkato}
\a_t=-2\kappa(\Re(a)q+\Im(a))-\frac{4\kappa A_0\Re(a)}{m_0(f+2A_0hp)}rh+O(\e).
\ee
To get from \eqref{alphatmatched} to \eqref{alphatkato}, we start by observing that
$$
h\Re(d)-f\Re(c)=-\Re(c)\big(f+2A_0hp).
$$
We then see that the two central terms in the numerator of \eqref{alphatmatched} are $\Im(a)$ times the denominator and so we obtain
$$
\a_t=-2\kappa\Big(\Im(a)-\Re(a)\frac{f\Im(c)-h\Im(d) }{\Re(c)(f+2A_0hp)}  \Big)+O(\e).
$$
In the numerator, we now add and subtract $2A_0hp\Im(c)$ and cancel with the denominator to obtain
$$
\a_t=-2\kappa\Big(\Im(a)-\Re(a)\frac{\Im(c)}{\Re(c)}+h\Re(a)\frac{\Im(c)2A_0p+\Im(d)}{\Re(c)(f+2A_0hp)}  \Big)+O(\e).
$$
Now, plugging in the value of $p$ and recalling the definition of $r$, we come to
$$
\a_t=-2\kappa\Big(\Im(a)-\Re(a)\frac{\Im(c)}{\Re(c)}+h\Re(a)\frac{r}{2A_0\Re(c)(f+2A_0hp)} \Big)+O(\e).
$$
We note that upon plugging in $m_0=2A_0^2\Re(c)$ that this expression matches \eqref{alphatkato}.

Turning to $\mu_t$, we start by finding the coefficient, $P_1$, of $i\hat\s$ in $(i\hat\s)^{-2}\det(M(\e,\hat\s)-\l_t Id )$. Using multilinearity of $\det$, we find by choosing the $O(\hat\s)$ terms in each column and the $O(1)$ terms in the remaining columns
\ba\label{matchedOs}
P_1&=\det\bp -2\kappa\Im(a)-\a_t & -2\kappa\Re(a) & A_0\Re(d)\\ 2\kappa\Re(a) & -2\kappa\Im(a)-\a_t & A_0\Im(d)\\ 2A_0\Re(g) & 2A_0\Im(g) & \e^{-1}f-\a_t\ep+\det\bp 2A_0^2\Re(c) & -\Im(a) & A_0\Re(d)\\ 2A_0^2\Im(c) & \Re(a)+\mu_t & A_0\Im(d) \\2A_0h\e^{-1}+2A_0\kappa\Im(g) & 0 & \e^{-1}f-\a_t\ep\\
&\quad+\det\bp 2A_0^2\Re(c) & -2\kappa\Re(a) & 0\\ 2A_0^2\Im(c) & -2\kappa\Im(a) & 0 \\ 2A_0h\e^{-1}+2A_0\kappa\Im(g) & 2A_0\Im(g) & e_B+\mu_t\ep.
\ea

The first determinant in \eqref{matchedOs} is to leading order in $\e$ given by
\be\label{P1const}
\det\bp -2\kappa\Im(a)-\a_t & -2\kappa\Re(a) & A_0\Re(d)\\ 2\kappa\Re(a) & -2\kappa\Im(a)-\a_t & A_0\Im(d)\\ 2A_0\Re(g) & 2A_0\Im(g) & \e^{-1}f-\a_t\ep=\e^{-1}f\Big((-2\kappa\Im(a)-\a_t )^2+4\kappa^2\Re(a)^2 \Big)+O(1).
\ee

For the second determinant in \eqref{matchedOs}, we obtain
\ba\label{P1mut}
\det\bp 2A_0^2\Re(c) & -\Im(a) & A_0\Re(d)\\ 2A_0^2\Im(c) & \Re(a)+\mu_t & A_0\Im(d) \\2A_0h\e^{-1}+2A_0\kappa\Im(g) & 0 & \e^{-1}f-\a_t\ep&=\mu_t\big(2A_0^2\e^{-1}(\Re(c)f-h\Re(d) ) +O(1)\big)\\
&\quad+2\e^{-1}A_0^2\Im(a)\big(f\Im(c)-h\Im(d) \big)\\
&\quad+2\e^{-1}A_0^2\Re(a)\big(\Re(c)f-h\Re(d) \big)+O(1).
\ea
The third determinant is linear in $\mu_t$ and $O(1)$, so we will not need it. So, setting $P_1=0$, we obtain a linear equation of the form
$$
c_3\mu_t+c_4=0,
$$
with $c_3,c_4=O(\e^{-1})$. Applying \eqref{P1const} and \eqref{P1mut}, we are led to
\ba\label{mut}
\mu_t&=-\frac{f\Big((-2\kappa\Im(a)-\a_t )^2+4\kappa^2\Re(a)^2 \Big)+2A_0^2\Im(a)\big(f\Im(c)-h\Im(d)\big)+2A_0^2\Re(a)\big(f\Re(c)-h\Re(d)\big) } { 2A_0^2(\Re(c)f-h\Re(d) )}\\
&\quad+O(\e).
\ea

\br
The same kind of calculation can also be used to find $\mu_c$. Indeed, as $\a_c\sim\e^{-1}$, one replaces \eqref{P1const} with
$$
(\e^{-1}f-\a_c)\a_c^2+O(\e^{-2}).
$$
We must also adapt \eqref{P1mut} to
$$
\mu_c2A_0^2\Big(\Re(c)\big(\e^{-1}f-\a_c \big)-\e^{-1}h\Re(d) \Big)+H.O.T.
$$
We notice that this expression vanishes to leading order. In order to match the $O(\e^{-3})$ term 
from \eqref{P1const}, we consider the third matrix in \eqref{matchedOs}, which has an $O(\e^{-1})$ coefficient of $\mu_c$ coming from the third determinant in \eqref{matchedOs}, which gives us
$$
-2A_0^2\Re(c)\a_c \mu_c+ H.O.T.
$$
At this point, we then obtain
$$
\mu_c=\frac{(\e^{-1}f-\a_c)\a_c}{2A_0^2\Re(c) }+O(\e^{-1}).
$$
Plugging in the expression for $\a_c$, we obtain
$$
\mu_c=\e^{-2}\frac{\Re(d)h}{2A_0^2\Re(c)^2 }\Big(f-\frac{\Re(d)h}{\Re(c)} \Big)+O(\e^{-1}),
$$
which upon factoring a copy $f/\Re(c)$ out of the expression in parentheses yields
$$
\mu_c=\e^{-2}\frac{\Re(d)hf}{2A_0^2\Re(c)^3}\Big(\Re(c)-\frac{\Re(d)h}{f}\Big)+O(\e^{-1}), 
$$
which agrees with our previous expression.

\er

Let us make a few key observations about \eqref{mut}. The first is that, to leading order, $\mu_t$ is a function of $\kappa^2$ as $\a_t$ in \eqref{alphatmatched} is to leading order proportional to $\kappa$. Second, $\mu_t$ is, to leading order, {\it independent} of $e_B$ and $g$. Third, at $\kappa=0$, $\mu_t$ reduces to
\be\label{BFNmut}
\mu_t=-\Re(a)-\Im(a)\frac{\Im(\hat c)}{\Re(\hat c)},
\ee
where $\hat c=c-dh/f$ is the updated value of $c$ in the Darcy reduction. Hence the corresponding Benjamin-Feir-Newell criterion is the same as that for the Darcy reduction provide that $\Re(\hat b)>0$.\\

Let us start by finding necessary stability criteria, starting with $\mu_c$. We observe in the scalar case that the sign of $\mu_c^0$ is {\it independent} of $\kappa$, as by \eqref{genmu's}, \eqref{p_q}, \eqref{m0}, we see that
\be\label{mu0cexpanded}
\mu_c^0=-2m_0^{-1}A_0  hp( f+2A_0hp)=\frac{\Re(d)hf}{2A_0^2\Re(c)^3}\Big(\Re(c)-\frac{\Re(d)h}{f} \Big),
\ee
which exactly matches the corresponding expression in \cite{WZ3}. Indeed, this leads us to half of the stability criterion for $\mu_c$, extending Lemma 5.6 of \cite{WZ3} whose proof we also recall.
\begin{lemma}\label{lemma5.6}
	If $\Re(\hat c)>0$ with $\hat c=c-dh/f$, then $\mu_c>0$.
\end{lemma}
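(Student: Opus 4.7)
The plan is to read off the sign of $\mu_c$ directly from the factored expression for $\mu_c^0$ in \eqref{mu0cexpanded}, using only the supercriticality condition $\Re(c)<0$ from \eqref{supercrit} and the hypothesis $\Re(\hat c)>0$. The key observation is that, because $h$ and $f$ are real scalars in the single--conservation--law setting $m=1$, one has
\[
\Re(c)-\frac{\Re(d)\,h}{f} \;=\; \Re\!\Big(c-\frac{dh}{f}\Big) \;=\; \Re(\hat c),
\]
so the parenthetical factor in \eqref{mu0cexpanded} is exactly $\Re(\hat c)$, and $\mu_c^0$ factors as
\[
\mu_c^0 \;=\; \frac{\Re(d)\,h\,f}{2A_0^2\,\Re(c)^3}\,\Re(\hat c).
\]
In particular $\mu_c^0$ is real, so $\mu_c$ inherits its sign for $\eps>0$ sufficiently small via $\mu_c=\eps^{-2}\mu_c^0+O(\eps^{-1})$ from \eqref{genmu's}.

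Next I would check signs. Supercriticality gives $\Re(c)<0$, so $\Re(c)^3<0$. Under $\Re(\hat c)>0$, the inequality $\Re(c)-\Re(d)h/f>0$ together with $\Re(c)<0$ forces
\[
\frac{\Re(d)\,h}{f} \;<\; \Re(c) \;<\; 0,
\]
so $\Re(d)\,h$ and $f$ have opposite signs (note $f\neq 0$ is implicit in the Darcy reduction defining $\hat c$), hence $\Re(d)\,h\,f<0$. The numerator $\Re(d)\,h\,f\cdot\Re(\hat c)$ is therefore the product of a negative and a positive quantity and is negative; the denominator $2A_0^2\,\Re(c)^3$ is also negative; the ratio $\mu_c^0$ is thus strictly positive. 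Consequently $\mu_c>0$ for all sufficiently small $\eps>0$, as claimed.

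There is no substantive analytic obstacle here; the proof reduces to careful sign bookkeeping applied to the closed-form expression for $\mu_c^0$ already derived. The content of the lemma is that, in the supercritical scalar regime, the Darcy ``stability'' sign $\Re(\hat c)<0$ is necessary for diffusive stability through the conservative mode: any violation $\Re(\hat c)>0$ drives $\mu_c$ strictly positive and hence produces a genuine (non-diffusive) instability in the expansion $\lambda_c=\alpha_c\hat\sigma+\mu_c\hat\sigma^2+O(\hat\sigma^3)$ from \eqref{lamex}.
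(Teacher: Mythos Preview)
Your proof is correct and follows essentially the same approach as the paper's: both identify the parenthetical factor in \eqref{mu0cexpanded} as $\Re(\hat c)>0$, deduce $\Re(d)h/f<\Re(c)<0$, and conclude that $\Re(d)hf<0$ has the same sign as $\Re(c)^3$, yielding $\mu_c^0>0$. Your version is slightly more explicit in tracking signs and in noting that $\mu_c^0>0$ implies $\mu_c>0$ for small $\eps$ via \eqref{genmu's}, but the substance is identical.
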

\begin{proof}
	If $\Re(\hat c)>0$, then $\Re(d)h/f<0$ since $\Re(c)<0$. Hence in the right most expression of \eqref{mu0cexpanded}, the term in parentheses is positive and the term outside parentheses is also positive since $\Re(d)h/f$ and $\Re(d)hf$ have the same sign and under the assumption $\Re(\hat c)>0$ has the same sign as $\Re(c)^3$.
\end{proof}
Let us proceed further to find necessary and sufficient criteria for $\mu_c^0<0$.
\begin{proposition}\label{mucconditions}
	There holds $\mu_c^0<0$ if and only if
	\be\label{dhfcond}
	\Re(c)<\frac{\Re(d)h}{f}<0.
	\ee
\end{proposition}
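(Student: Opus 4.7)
The plan is to read off both directions directly from the closed-form expression
\[
\mu_c^0 \;=\; \frac{\Re(d)\,h\,f}{2A_0^2\,\Re(c)^3}\Big(\Re(c)-\frac{\Re(d)\,h}{f}\Big)
\]
already derived at \eqref{mu0cexpanded}, together with the standing supercriticality hypothesis $\Re(c)<0$ from \eqref{supercrit}. Writing $\alpha := \Re(d)h/f$, I observe that the prefactor $\Re(d)hf = \alpha f^2$ has the same sign as $\alpha$ (since $f^2>0$ in the scalar case), while the denominator $2A_0^2\Re(c)^3$ is strictly negative. Hence
\[
\mu_c^0<0 \iff \alpha\bigl(\Re(c)-\alpha\bigr)>0.
\]

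The remaining step is a two-line sign case-analysis on $\alpha$. If $\alpha>0$ the product is positive only when $\Re(c)>\alpha>0$, which is ruled out by $\Re(c)<0$; if $\alpha=0$ then $\mu_c^0=0$; and if $\alpha<0$ the product is positive precisely when $\Re(c)-\alpha<0$, i.e.\ $\Re(c)<\alpha<0$. Combining these, $\mu_c^0<0$ is equivalent to $\Re(c)<\alpha<0$, which is exactly \eqref{dhfcond}. Note this argument also sharpens Lemma \ref{lemma5.6}: the excluded range $\alpha\geq 0$ (with strict positivity in the top half, which is $\Re(\hat c)>0$ since $\hat c = c - dh/f$ gives $\Re(\hat c)=\Re(c)-\alpha$) yields $\mu_c^0\geq 0$, so no information is lost.

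There is no real obstacle here beyond careful bookkeeping of signs; the only point worth flagging is that the formula for $\mu_c^0$ has already been validated (it matches the expression derived in \cite{WZ3} via the Kato-style expansion, as noted just above the proposition), so one may apply it without re-deriving it. The proposition then follows by the elementary sign analysis above, with both strict inequalities in \eqref{dhfcond} arising from the requirement that the two factors $\alpha$ and $\Re(c)-\alpha$ be strictly of matching sign.
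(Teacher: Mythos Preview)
Your proof is correct and follows essentially the same approach as the paper: a direct sign analysis on the explicit formula \eqref{mu0cexpanded}. Your presentation is arguably a bit cleaner---by introducing $\alpha=\Re(d)h/f$ and observing $\Re(d)hf=\alpha f^2$, you handle both inequalities in a single unified case split, whereas the paper invokes Lemma~\ref{lemma5.6} for the necessity of $\Re(\hat c)<0$ and then argues the remaining inequality $\Re(d)hf<0$ separately under that assumption.
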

\begin{proof}
	The lower inequality in \eqref{dhfcond} was established in Lemma \ref{lemma5.6}. For the other inequality, assume $\Re(\hat c)<0$ and so $\mu_c^0<0$ if and only if $\Re(d)hf<0$ as $\Re(\hat c)/\Re(c)^3>0$ under the assumption $\Re(\hat c)<0$. This completes the proof.
\end{proof}

To obtain Eckhaus-style criteria from \eqref{mut}, we begin by multiplying and dividing by $f$. After doing so and using $\hat c=c-dh/f$, we obtain
\be\label{mut2}
\mu_t=-\frac{\Big((-2\kappa\Im(a)-\a_t)^2+4\kappa^2\Re(a)^2 \Big)+2A_0^2\Im(a)\Im(\hat c)+2A_0^2\Re(a)\Re(\hat c)}{2A_0^2\Re(\hat c)}+O(\e).
\ee
To determine the sign $\mu_t$, we observe that we can freely drop the $2A_0^2$ from the denominator as it is always positive. By Lemma \ref{lemma5.6}, we can further assume that $\Re(\hat c)<0$ as otherwise all waves are unstable. So for stability, we're left with determining for what $\kappa$
\be\label{eckhaus1}
\Big((-2\kappa\Im(a)-\a_t)^2+4\kappa^2\Re(a)^2 \Big)+2A_0^2\Im(a)\Im(\hat c)+2A_0^2\Re(a)\Re(\hat c)<0,
\ee
holds. To continue, we do the same to $\a_t$ as in \eqref{alphatmatched}, where we obtain
$$
\a_t= 2\kappa\Big(-\Im(a)+\Re(a)\frac{\Im(\hat c)}{\Re(\hat c)} \Big).
$$
Recalling the definition of $A_0^2$ from \eqref{A0soln}, we find that \eqref{eckhaus1} can be written as
\be\label{eckhaus2}
4\kappa^2\Re(a)^2(1+\hat q^2)-2\frac{\Re(\tilde b)-\Re(a)\kappa^2}{\Re (c)}(\Im(a)\Im(\hat c)+\Re(a)\Re(\hat c) )<0,
\ee
where $\hat q:=-\Im(\hat c)/\Re(\hat c)$. Collecting the $\kappa^2$ terms on the left hand side and the remainder on the right hand side, we find
\be\label{eckhaus3}
\kappa^2\Big(4\Re(a)^2(1+\hat q^2)+2\frac{\Re(a)}{\Re(c)}\big(\Im(a)\Im(\hat c)+\Re(a)\Re(\hat c) \big) \Big)<2\frac{\Re(\tilde b)}{\Re(c)}(\Im(a)\Im(\hat c)+\Re(a)\Re(\hat c) ).
\ee
For us, the Benjamin-Feir-Newell criterion is that \eqref{BFNmut} is negative. Or, after rearranging,
$$
\Re(a)+\Im(a)\frac{\Im(\hat c)}{\Re(\hat c)}>0.
$$
As we've assumed $\Re(\hat c)<0$, we then obtain
$$
\Re(a)\Re(\hat c)+\Im(a)\Im(\hat c)<0.
$$
Moreover, since we've assumed $\Re(c)<0$, $\Re(a)>0$, and $\Re(\tilde b)>0$, we find that the coefficients in \eqref{eckhaus3} are positive.

\begin{lemma}
	Let $\kappa_S^2$ be defined by
	\be\label{kappastab}
	\kappa_S^2:=\frac{2\frac{\Re(\tilde b)}{\Re(c)}(\Im(a)\Im(\hat c)+\Re(a)\Re(\hat c) )}{ 4\Re(a)^2(1+\hat q^2)+2\frac{\Re(a)}{\Re(c)}\big(\Im(a)\Im(\hat c)+\Re(a)\Re(\hat c) \big)}. 
	\ee
	Then $\kappa_S^2\leq\kappa_E^2$ for $\kappa_E^2$ as in \eqref{solndom} provided the Benjamin-Feir-Newell criterion \eqref{BFNmut} holds.
\end{lemma}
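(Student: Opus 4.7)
The plan is a direct algebraic verification after a sign bookkeeping of the factors appearing in the numerator and denominator of $\kappa_S^2$.

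First I would introduce the shorthand $X := \Im(a)\Im(\hat c) + \Re(a)\Re(\hat c)$, which is exactly the quantity controlled by \eqref{BFNmut}. Under the Benjamin-Feir-Newell assumption (written in the form $\Re(a) + \Im(a)\Im(\hat c)/\Re(\hat c) > 0$) together with the stability-forced inequality $\Re(\hat c) < 0$ established in Lemma \ref{lemma5.6} and Proposition \ref{mucconditions}, multiplying through by $\Re(\hat c)$ reverses the inequality and yields $X < 0$. With the standing hypotheses $\Re(a) > 0$, $\Re(c) < 0$, and $\Re(\tilde b) > 0$, this gives $\Re(\tilde b) X / \Re(c) > 0$ and $\Re(a) X / \Re(c) > 0$, so both the numerator and denominator of $\kappa_S^2$ in \eqref{kappastab} are strictly positive (the denominator being a sum of two positive terms).

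Next I would rewrite the desired inequality $\kappa_S^2 \leq \kappa_E^2 = \Re(\tilde b)/\Re(a)$ by cross-multiplying the (now verified) positive denominators, obtaining
\[
2\Re(a)\,\frac{\Re(\tilde b)}{\Re(c)}\,X \;\leq\; \Re(\tilde b)\Big(4\Re(a)^2(1+\hat q^2) + 2\frac{\Re(a)}{\Re(c)}X\Big).
\]
Canceling the common factor $\Re(\tilde b) > 0$ and then canceling the identical term $2\Re(a) X/\Re(c)$ from both sides leaves
\[
0 \;\leq\; 4\Re(a)^2(1+\hat q^2),
\]
which is trivially true.

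The only nontrivial step is the sign of $X$, which must be deduced by combining the BFN criterion with $\Re(\hat c) < 0$; once this is settled the remainder is bookkeeping. Thus I do not anticipate any real obstacle beyond being careful that the direction of the inequality in BFN is correctly flipped when dividing by the negative quantity $\Re(\hat c)$.
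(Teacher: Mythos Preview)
Your proof is correct and follows essentially the same approach as the paper's: both arguments identify that under BFN together with $\Re(\hat c)<0$ the quantity $X=\Im(a)\Im(\hat c)+\Re(a)\Re(\hat c)$ is negative, so that numerator and denominator of $\kappa_S^2$ are positive, and then reduce the inequality to the trivial $4\Re(a)^2(1+\hat q^2)\geq 0$. The only cosmetic difference is that the paper drops the nonnegative term $4\Re(a)^2(1+\hat q^2)$ from the denominator to bound $\kappa_S^2$ above, whereas you cross-multiply and cancel; these are equivalent manipulations.
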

\begin{proof}
	In \eqref{eckhaus3}, we observe that the left hand side is bounded from below by
	$$
	\kappa^2\Big(2\frac{\Re(a)}{\Re(c)}\big(\Im(a)\Im(\hat c)+\Re(a)\Re(\hat c) \big) \Big),
	$$
	as $4\Re(a)^2(1+\hat q^2)\kappa^2$ is readily seen to be positive. As $\kappa_S^2$ is the right hand side of \eqref{eckhaus3} divided by the left hand side, we conclude that
	$$
	\kappa_S^2<\frac{2\frac{\Re(\tilde b)}{\Re(c)}(\Im(a)\Im(\hat c)+\Re(a)\Re(\hat c) )}{2\frac{\Re(a)}{\Re(c)}\big(\Im(a)\Im(\hat c)+\Re(a)\Re(\hat c) \big) }=\kappa_E^2.
	$$
\end{proof}

\begin{remark}\label{symmrmk}
	One can expect that $\mu_t$ and $\mu_c$ are real based on an observation which played a key role in \cite{WZ3} in showing that the wave could not destabilize to leading order. More precisely, each branch of the dispersion relation satisfies
	\begin{equation}\label{realsymmetry}
		\overline{\lambda(-\sigma)}=\lambda(\sigma),
	\end{equation}
	near $\sigma=0$. Hence by \eqref{realsymmetry} and the chain rule, all even derivatives of $\lambda$ are real at $\sigma=0$ and all odd derivatives of $\lambda$ are pure imaginary at $\sigma=0$.
\end{remark}

\begin{definition}\label{astabdef}
	We define \emph{asymptotic diffusive stability} by the pair of conditions 
	\be\label{assdiffstab}
	\hbox{\rm $\Re \mu_t^0>0$, $\Re \mu_c^0>0$, with $\mu_j$ as in \eqref{genmu's}.}
	\ee
	We define \emph{asymptotic instability} by failure of {\it asymptotic neutral stability}
	\be\label{assneutralstab}
	\hbox{\rm $\Re \mu_t^0\geq 0$, $\Re \mu_c^0\geq 0$,}
	\ee
	i.e., $\Re \mu_t^0 < 0$ or $\Re \mu_c^0 < 0$.\footnote{
		As $\mu_t^0$ and $\mu_c^0$ are real in this setting, these are sign conditions for $\mu_t^0$ and $\mu_c^0$ themselves.}
\end{definition}

Evidently, asymptotic diffusive stability is necessary for strict stability, 
$\Re \lambda(\eps, \delta)<0$ for all $\eps>0$, $\delta\neq 0$, while asymptotic instability
is sufficient for instability,
$\Re \lambda(\eps, \delta)>0$ for some $\eps>0$, $\delta \in \R$.

\br\label{couplermk}
In the decoupled case $d=0$ treated in \cite{HSZ}, we see that $\mu_c=0$ and diffusion is at the lower order 
$e_B$. That is, the $\eps^{-2}$ diffusion rate in the generic case is an example of ``convection-enhanced diffusion''.
\er 

\br\label{relaxrmk}
The key quantity $f+ 2A_0h p$ in \eqref{fluxcond} may be regarded as an effective convection obtained by Chapman-Enskog type reduction; note
that our equations have the form of a relaxation system.
The condition that $f+2A_0h p$ be nonzero is equivalent to noncharacteristicity of this effective convection.
Indeed, by the desingularizing rescaling $\delta\to \tilde \delta$, 
we are effectively converting the singular truncated model to a {\it singularly perturbed} model
\ba\label{spert}
A_t &= bA + c|A|^2 A +dAB \eps A_{xx},\\
B_t  + (f-h|A|^2)_x &=  \eps(\Re(gA\bar A_x)_x +\eps e_B B_{xx}
\ea
consisting of an inviscid limit of a relaxation system, 
similar in form to the viscous Saint Venant equations treated in \cite{BJNRZ}.
This may further explain points of similarity in the analyses remarked elsewhere.
We note in particular the remarkably simple form of $\mu_c$ as proportional to the effective convection,
similar to a key relation found in \cite{JNRYZ}, so that {\it second-order behavior in the ``mean mode'' 
	c may be deduced by study of more easily accessible first-order behavior in $\hat \sigma$}, e.g., 
by the first-order Whitham modulation approximation.
\er

\br\label{degenrmk}
The degenerate case $\Im(d)=q\Re(d)$ treated already in \cite{WZ3,Wh} may be treated by the above 
argument equally well, that is, this covers both degenerate and generic case in a common framework.
However, more terms must be included in deriving asymptotics.
\er

\section{Necessity of Darcy conditions}\label{s:Dnec}
We next establish necessity of the Darcy stability conditions $\Re(\hat \sigma)<0$,
$\Re \mu_t^0<0$, determining stability of periodic solutions with respect to the reduced Darcy system 
\eqref{Darcy}, for stability with respect to the full system \eqref{ampeq}.

\subsection{Relation of linearizations}\label{s:rellin}
Our first step is to observe the relation between the linearizations of \eqref{ampeq} and \eqref{Darcy}
after reduction to constant coefficients.
Recall that the linearization of the full system \eqref{ampeq}, reduced to constant coefficients, is
\ba\label{eq:truncatedmatrix2}
M(\e,\s)&=-\s^2\bp \Re(a)& -\Im(a) &0\\ \Im(a)& \Re(a) &0\\ 2A_0\Re(g) & 2A_0\Im(g) & e_B \ep
+i\s\bp -2\kappa \Im(a)& -2\kappa \Re(a) & 0\\
+2\kappa \Re(a)& -2\kappa \Im(a) & 0\\ 2A_0h \e^{-1}+2A_0\kappa \Im(g) & 0 & \e^{-1}f\ep\\
&\quad+\bp  2A_0^2\Re(c) & 0& A_0\Re(d)\\ 2A_0^2\Im(c) & 0 & A_0\Im(d)\\
0 & 0  & 0\ep =: M_0 + \hat \sigma M_1 + \hat \sigma^2 M_2.
\ea
Likewise (see, e.g., \cite{WZ2}), the linearization of the Darcy model \eqref{Darcy}, 
reduced to constant coefficients, is
\ba\label{eq:darcysys}
m(\e,\s)&=-\s^2\bp \Re(a)& -\Im(a) \\ \Im(a)& \Re(a) \ep
+i\s\bp -2\kappa \Im(a)& -2\kappa \Re(a) \\ +2\kappa \Re(a)& -2\kappa \Im(a) \ep\\
&\quad+\bp  2A_0^2\Re(\hat c) & 0 \\ 2A_0^2\Im(\hat c) & 0 \ep
=: m_0 + \hat \sigma m_1 + \hat \sigma^2 m_2.
\ea

Note that the linearization of \eqref{Darcy} is the linearization of the first two equations of the full model
\eqref{ampeq} subject to relation  
$$
B(A)= B_0 + f^{-1} h |A_0|^2 - f^{-1} h |A|^2=  B_0 + f^{-1} h |A_0|^2 - f^{-1} h(U^2+V^2)
$$
of \eqref{Dansatz}. By the chain rule, this is equivalent to
$$
\bp \Id_2 & 0\ep L(\e,\s,\hat x) \bp \Id_2\\ dB/d(U,V)\ep, \qquad
dB/d(U,V)|_{(\bar U,\bar V)}= \bp -2\bar U f^{-1}h& -2\bar Vf^{-1}h\ep, 
$$
where $L$ is the linearization about the periodic wave in the full model \eqref{ampeq}.
As the same exponential coordinate transformation taking $L$ to constant-coefficient takes $dB/d(U,V)$
to a constant-coefficient right multiplier $\bp \Id_2 & N\ep$ canceling the singularity in the third equation
of the full system \eqref{eq:truncatedmatrix}, we may conclude that
\be\label{Dfrel}
\hbox{\rm $m(\e,\s)=\bp \Id_2 & 0\ep M(\e,\s)\bp \Id_2\\ N\ep$, \; with $N=\bp -f^{-1}2A_0 h  & 0\ep$,}
\ee
or 
\ba\label{Dver}
m(\e,\s)&=-\s^2\bp \Re(a)& -\Im(a) \\ \Im(a)& \Re(a) \ep
+i\s\bp -2\kappa \Im(a)& -2\kappa \Re(a) \\ +2\kappa \Re(a)& -2\kappa \Im(a) \ep\\
&\quad+\bp  2A_0^2\Re( c) -f^{-1}2A_0^2h \Re(d) & 0 \\ 2A_0^2\Im( c)-f^{-1}2A_0^2h \Im(d) & 0 \ep.
\ea

Alternatively, comparing \eqref{Dver} against \eqref{eq:darcysys}, and using 
relation $\hat c= c- df^{-1}h$ from \eqref{rels}, we may verify \eqref{Dfrel} by direct computation.

\subsection{Matrix perturbation expansion}\label{s:mpx}
We complete our study by a matrix perturbation analysis of $M$.
Specifically, taking a matrix perturbation point of view, we observe that the change of coordinates
$M\to S^{-1}MS$ with
\be\label{S}
S= \bp \Id_2 & 0\\ N & \Id_m\ep
\ee 
block diagonalizes the singular portion
$ \eps^{-1} \bp 0 & 0 & 0\\ 0 & 0 & 0\\ 2A_0 h & 0 & f\ep $ of $M$, giving
\be\label{Dbd}
S^{-1}M(\e, \s)S= \bp m(\e, \s) & 0\\ 0_2 & i\sigma \eps^{-1}f\ep + \bp 0_2 & O(1)\\O(1) & O(1)\ep. 
\ee

\begin{proposition}\label{darcyprop}
	Let $\det f\neq 0$.
	Then, for $|\s|\in [1/C,C]$, for any fixed $C>0$, and $\eps$ sufficiently small,
	$\spec M(\e,\s)$ consists of $m$ eigenvalues lying within $o(\eps^{-1})$ of $\eps^{-1} \spec f$ together
	with $2$ eigenvalues lying within $o(1)$ of Darcy eigenvalues $\spec(m)$.
\end{proposition}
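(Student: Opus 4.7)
The plan is a two-step block-diagonalization argument that exploits the $O(\eps^{-1})$ spectral gap between the two blocks appearing in \eqref{Dbd} to first separate the \emph{fast} and \emph{slow} dynamics, then apply classical matrix perturbation theory within each isolated block.

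First, I start from \eqref{Dbd}, which after conjugation by $S$ writes
\[
S^{-1}M(\e,\s)S \;=\; D(\e,\s) + E(\e,\s), \qquad D(\e,\s) := \bp m(\e,\s) & 0 \\ 0 & i\s\eps^{-1}f \ep,
\]
with $E$ uniformly $O(1)$ in $\eps$ on the compact range $|\s|\in[1/C,C]$ and with vanishing upper-left $2\times 2$ block. Under the hypothesis $\det f \neq 0$ together with $|\s|\geq 1/C$, every eigenvalue of $i\s\eps^{-1}f$ has modulus at least $\eps^{-1}/(C\,\|f^{-1}\|)$, while $\spec(m(\e,\s))$ remains in a fixed bounded region for $|\s|\leq C$. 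Hence, for $\eps$ sufficiently small, the spectra of the two diagonal blocks of $D$ are separated by a gap of order $\eps^{-1}$.

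Second, I further block-diagonalize $D+E$ by a near-identity similarity $T = I + K$ with $K = \bp 0 & P \\ -Q & 0 \ep$ block off-diagonal. Setting the off-diagonal blocks of $T^{-1}(D+E)T$ to zero yields, at leading order in $K$, the coupled Sylvester equations
\[
m(\e,\s)\,P - P\,(i\s\eps^{-1}f) = -E_{12}, \qquad (i\s\eps^{-1}f)\,Q - Q\,m(\e,\s) = E_{21}.
\]
The $O(\eps^{-1})$ spectral gap renders both Sylvester operators boundedly invertible with inverses of operator norm $O(\eps)$ (a standard estimate via holomorphic functional calculus, uniform on $|\s|\in[1/C,C]$). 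A Banach fixed-point iteration then absorbs the quadratic-in-$K$ corrections and produces $\|P\|,\|Q\|=O(\eps)$, effecting exact block-diagonalization, after which the two diagonal blocks take the form $m(\e,\s) + O(\eps)$ and $i\s\eps^{-1}f + O(1)$.

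Finally, I apply standard matrix perturbation theory to each decoupled block: Bauer--Fike (or semi-simple Kato expansions when eigenvalues are distinct) yields eigenvalues of the first block within $o(1)$ of $\spec(m(\e,\s))$, producing the $2$ Darcy eigenvalues, while the second block's eigenvalues lie within $O(1) = o(\eps^{-1})$ of $i\s\eps^{-1}\spec(f)$, producing the $m$ fast eigenvalues. The main technical point to verify carefully is the uniformity in $\s\in[1/C,C]$ and $\e\in(0,\e_0]$ of the Sylvester-inverse bound and the resulting contraction mapping; this in turn reduces to the uniform $\eps^{-1}$-scale spectral gap established in the first step, so no genuinely new obstacle arises.
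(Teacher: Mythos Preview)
Your proposal is correct and follows essentially the same approach as the paper: exploit the $O(\eps^{-1})$ spectral gap between the two diagonal blocks of \eqref{Dbd} to construct an exact near-identity block-diagonalizer $T=\Id+O(\eps)$, then read off the spectra of the decoupled blocks $m(\e,\s)+O(\eps)$ and $i\s\eps^{-1}f+O(1)$ via continuity of spectra. The paper simply cites standard matrix perturbation theory \cite{K,MZ,PZ} for the existence of $T$ with $\|\theta_j\|=O(\eps)$, whereas you spell out the underlying Sylvester-equation and contraction-mapping mechanism; the content is the same.
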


\begin{proof}
	The first, block-diagonal term of approximate block-diagonalization \eqref{Dbd} 
	under the assumption $\det f\neq 0$ has spectral separation of order $\s/\eps \gtrsim \eps^{-1}$
	between the upper left and lower right blocks.
	By standard matrix perturbation theory \cite{K,MZ,PZ}, it follows that that there exists a further {\it exact}
	near-identity diagonalizer
	\be\label{nearid}
	T:=\bp \Id_2 & \theta_1\\ \theta_2 & \Id\ep
	\ee
	with $|\theta_j|$ bounded by the size $O(1)$ of off-diagonal blocks divided by the spectral separation,
	or $|\theta_j|=O(\eps)$.
	This gives
	\be\label{xDbd}
	T^{-1}S^{-1}M(\e, \s)ST= \bp m(\e, \s)+ O(\eps) & 0\\ 0_2 & \eps^{-1}i\sigma f+ O(1)\ep,
	\ee
	whence the result follows by continuity of spectra under matrix perturbation.
\end{proof}

\bc\label{Darcycor}
For $\hat c\neq 0$, $\kappa^2\neq \kappa_{Dstab}^2$, and $\det f\neq 0$, stability of
the Darcy system \eqref{Darcy}--\eqref{rels}
is \emph{necessary} for stability of the full model \eqref{ampeq} for $\e$ sufficiently small.
\ec

\begin{proof}
	For $\hat c\neq 0$, $\kappa^2\neq \kappa_{Dstab}^2$, failure of stability of
	the Darcy system \eqref{Darcy}--\eqref{rels}
	implies that some Darcy eigenvalue in $\spec (m)$ must take on a strictly positive real part
	for some $|\s_*|>0$. Taking $C>0$ large enough in Proposition \ref{darcyprop} that 
	$\s_*\in [1/C,C]$, we find for $\e$ small enough that the corresponding eigenvalue of
	$M(\e,\s_*)$ must also have strictly positive real part, by continuity.
	Thus, Darcy stability is necessary for full stability, by contradiction.
\end{proof}

\br\label{complrmk}
In the above argument, we used crucially that $(\s/\e)f$ have eigenvalues of norm $\gtrsim \eps^{-1}$
assuming $\det f\neq 0$, or equivalently $\s \gtrsim 1$, and also uniform boundedness of all terms not
involving $\eps^{-1}$, in particular $\s^2$. Thus, the restriction $\s\in [1/C,C]$ is sharp.
Note that this region in which the Darcy model is relevant is disjoint from the region $|\s|\leq \e/C$
for which the Taylor expansion leading to full Eckhaus conditions is valid.
\er

\br\label{impDarcy}
Recalling that the Eckhaus stability condition $\mu_t^0<0$ in the translational mode,
already shown to be necessary for stability in Section \ref{s:nec},
agrees with the Darcy condition for stability of {\it its} translational mode, we see that the
new information contained in Corollary \ref{Darcycor} is precisely the condition for stability
of the order-one Darcy mode: $\Re(\hat c)<0$.
\er

\subsection{Refinement in the scalar case}
Let us show that Proposition \ref{darcyprop} can be refined in the scalar case.
\begin{proposition}\label{darcyproprefined}
	Suppose $\mu_c^0<0$. Then $\mu_t=\mu_{D,t}+O(\e)$ where $\l_{D,t}(\hat \s)=i\a_{D,t}\hat\s+\mu_{D,t}\hat\s^2$ is the expansion of the neutral eigenvalue of \eqref{eq:darcysys}.
\end{proposition}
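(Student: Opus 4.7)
The plan is to verify the identity by direct comparison with the explicit formula \eqref{mut2} for $\mu_t$ already derived in Section \ref{s:nec}. Specifically, I will apply the matched asymptotics argument of that section to the $2\times 2$ Darcy linearization \eqref{eq:darcysys}, extract $\mu_{D,t}$ in closed form, and observe that it matches the leading-order part of \eqref{mut2} term for term.

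The first step is to carry out the analogue of the expansion \eqref{matchedO1}--\eqref{matchedOs} on $\det(m(\e,\hat\s) - \lambda I) = 0$ with ansatz $\lambda_{D,t}(\hat\s) = i\alpha_{D,t}\hat\s + \mu_{D,t}\hat\s^2 + O(\hat\s^3)$. Because $m(\e,\hat\s)$ contains no singular $\e^{-1}$ terms, the computation is far simpler than its analogue for the full $3\times 3$ matrix $M(\e,\hat\s)$. Extracting the $O(\hat\s)$ coefficient of the determinant gives $\alpha_{D,t} = -2\kappa(\Im(a) - \Re(a)\Im(\hat c)/\Re(\hat c))$, which coincides exactly with the simplified form of \eqref{alphatmatched}; in particular $\alpha_t = \alpha_{D,t} + O(\e)$.

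The second step is to extract the $O(\hat\s^2)$ coefficient of the determinant, which yields, after the same algebraic manipulations that took \eqref{mut} into \eqref{mut2},
$$
\mu_{D,t} = -\frac{(-2\kappa\Im(a)-\alpha_{D,t})^2 + 4\kappa^2\Re(a)^2 + 2A_0^2\Im(a)\Im(\hat c) + 2A_0^2\Re(a)\Re(\hat c)}{2A_0^2\Re(\hat c)}.
$$
This is functionally identical to \eqref{mut2} with $\alpha_t$ replaced by $\alpha_{D,t}$. Since $\alpha_t - \alpha_{D,t} = O(\e)$ and the expression is a smooth function of its arguments with $-2\kappa\Im(a) - \alpha_{D,t} = O(1)$, perturbing $\alpha_t$ by $O(\e)$ perturbs the numerator by $O(\e)$, giving $\mu_t = \mu_{D,t} + O(\e)$, as claimed.

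The role of the hypothesis $\mu_c^0 < 0$ is exactly to make this scheme well-defined. By Proposition \ref{mucconditions} it is equivalent to $\Re(c) < \Re(d)h/f < 0$, so in particular $\Re(\hat c) \neq 0$, which keeps the denominator $2A_0^2\Re(\hat c)$ in both formulas bounded away from zero and ensures that the two eigenvalues of $m(\e,0)$, namely $0$ and $2A_0^2\Re(\hat c)$, remain separated; this is what permits standard analytic matrix perturbation to produce the Taylor expansion of $\lambda_{D,t}$. Beyond this, no obstacle remains: the hard work of rewriting \eqref{mut} in the Darcy-adapted form \eqref{mut2} was precisely to expose the $\hat c$-dependence, so that once $\mu_{D,t}$ is computed the identification is immediate.
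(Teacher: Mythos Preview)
Your proof is correct and follows essentially the same route as the paper: compute $\mu_{D,t}$ from the $2\times 2$ Darcy linearization \eqref{eq:darcysys} by matched asymptotics, obtain the closed form identical to Appendix \ref{s:matched} with $c$ replaced by $\hat c$, and compare directly with \eqref{mut2}. The only notable difference is in how the hypothesis $\mu_c^0<0$ is deployed: the paper uses it via Proposition \ref{mucconditions} to show $\Re(\hat b(\kappa^2))>0$, so that the Darcy reduction fits the standing assumptions of \cite{WZ2} and Appendix \ref{s:matched} and the formula for $\mu_{D,t}$ can be quoted; you instead observe that the linearization \eqref{eq:darcysys} does not involve $\hat b$ at all, so the only condition actually needed for the eigenvalue expansion is $\Re(\hat c)\neq 0$, which already follows from $\mu_c^0<0$. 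Your framing is slightly more economical; the paper's buys the additional contextual point that the Darcy model is itself a bona fide supercritical Ginzburg--Landau equation. Both arrive at the same comparison, and the paper's ``critical observation'' that the Darcy reduction at frequency $\kappa$ has the same amplitude $A_0$ is implicit in your working directly with \eqref{eq:darcysys}.
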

\begin{proof}
	From Proposition \ref{mucconditions}, we can conclude that $\Re(\hat b(\kappa^2))$ is an increasing function of $\kappa^2$ as \eqref{A0soln} implies $A_0(\kappa^2)$ is a decreasing function and $\Re(d)h/f<0$. Moreover, $\hat b(0)$ is given by
	$$
	\hat b(0)=\tilde b-\frac{dh}{f}\frac{\Re(\tilde b)}{\Re(c)}=\tilde b-\frac{dh}{f\Re(c)}\Re(\tilde{b}).
	$$
	Taking real parts, we find
	$$
	\Re(\hat b(0))=\Big(1-\frac{\Re(d)h}{f\Re(c)} \Big)\Re(\tilde b)>0,
	$$
	by the lower bound on $\Re(d)h/f$ in Proposition \ref{mucconditions}.\\
	
	Hence the Darcy reduction gives a complex Ginzburg-Landau equation of the form studied in \cite{WZ2}, from which we can obtain an expression for $\mu_{D,t}$ by either Kato-style expansion as in \cite{WZ2} and then rearranging or by matched asymptotics as done in Appendix \ref{s:matched}. In either case, the expression we want is
	\be\label{darcymut}
	\mu_{D,t}=-\frac{\big(-2\kappa\Im(a)-\a_{D,t}\big)^2+4\kappa^2\Re(a)^2+2A_0^2\big(\Re(a)\Re(\hat c)+\Im(a)\Im(\hat c) \big) }{2A_0^2\Re(\hat c)},
	\ee
	with
	\be\label{darcyat}
	\a_{D,t}=-2\kappa\Im(a)+2\kappa\Re(a)\frac{\Im(\hat c)}{\Re(\hat c)}.
	\ee
	Comparing \eqref{darcymut} and \eqref{mut2} completes the proof, where we've used the critical observation that the Darcy reduction at the desired frequency has the same amplitude $A_0$ as the original amplitude system.
\end{proof}
Note that this refinement is still of the form ``Darcy stability is necessary but not sufficient'' as we did not need $\Re(d)h/f<0$, only the supercriticality condition $\Re(\hat c)<0$. To adapt the proof to $\Re(d)h/f>0$, we note that then $\Re(\hat b)$ is then a decreasing function of $\kappa^2$, with $\Re(\hat b(\kappa_E^2))=\Re(\tilde b)>0$.\\

Let us comment that the Darcy reduction at frequency $\kappa_*$ does not necessarily carry stability information for other frequencies, and in particular the Eckhaus condition for a typical $\kappa_*$ does not play a significant role. Indeed, for fixed $\kappa_*$ and assuming $\mu_c^0<0$, the existence range for the Ginzburg-Landau equation is smaller than that of the original amplitude system. In fact, by cleverly choosing $a,b,c,d,f,h$, one can make the existence range for Darcy at small $\kappa_*^2$ smaller than the stable range for the original amplitude system.

\begin{remark}
	One might call our Darcy reduction ``amplitude-adapted'' as it perfectly reconstructs the periodic wave it was generated by. The downside of this ``amplitude-adapted'' Darcy reduction is that it only carries information about that specific wave.\\
	
	An alternative way to perform the Darcy reduction, which one might call ``frequency-adapted'', is to choose $\tilde B_0$ to be $B_0$, so that the reduced Ginzburg-Landau equation is
	$$
	A_{\Ht}=aA_{\hat x\hat x}+\tilde bA+\hat c|A|^2A.
	$$
	This Ginzburg-Landau equation has periodic traveling waves in the same frequency range as the original amplitude system, hence the name ``frequency-adapted'' as it is generically the only way to preserve the range of frequencies, but the waves generically have different amplitudes compared to the corresponding frequency in the original amplitude system. An entirely similar computation to what we've done so far yields an Eckhaus condition for the ``frequency-adapted'' Darcy reduction of the form
	$$
	\kappa^2\leq\tilde{\kappa}^2_S:=\frac{2\frac{\Re(\tilde b)}{\Re(\hat c)}(\Im(a)\Im(\hat c)+\Re(a)\Re(\hat c) )}{ 4\Re(a)^2(1+\hat q^2)+2\frac{\Re(a)}{\Re(\hat c)}\big(\Im(a)\Im(\hat c)+\Re(a)\Re(\hat c) \big)}.
	$$
	We claim under the assumption that $\mu_c^0<0$ and the Benjamin-Feir-Newell criterion \eqref{BFN} that $\kappa^2_S\leq \tilde\kappa_S^2$ for $\kappa_S^2$ as in \eqref{kappastab}. Indeed, Proposition \ref{mucconditions} can be rephrased as $\Re(c)<\Re(\hat c)<0$, and so we readily obtain
	$$
	\Re(c)4\Re(a)^2(1+\hat q^2)+2\Re(a)\big(\Im(a)\Im(\hat c)+\Re(a)\Re(\hat c) \big)\leq \Re(\hat c)4\Re(a)^2(1+\hat q^2)+2\Re(a)\big(\Im(a)\Im(\hat c)+\Re(a)\Re(\hat c) \big).
	$$
	Taking the reciprocal then yields
	$$
	\begin{aligned}
		&\frac{1}{\Re(c)4\Re(a)^2(1+\hat q^2)+2\Re(a)\big(\Im(a)\Im(\hat c)+\Re(a)\Re(\hat c) \big)}\\
	&\quad \geq\frac{1}{\Re(\hat c)4\Re(a)^2(1+\hat q^2)+2\Re(a)\big(\Im(a)\Im(\hat c)+\Re(a)\Re(\hat c) \big)}.
	\end{aligned}
	$$
	Multiplying both sides by $\Re(\tilde b)(\Im(a)\Im(\hat c)+\Re(a)\Re(\hat c) )<0$ by \eqref{BFN} then yields
	$$
	\begin{aligned}
		&\frac{\Re(\tilde b)(\Im(a)\Im(\hat c)+\Re(a)\Re(\hat c) )}{\Re(c)4\Re(a)^2(1+\hat q^2)+2\Re(a)\big(\Im(a)\Im(\hat c)+\Re(a)\Re(\hat c) \big)}\\ &\quad \leq 
		\frac{\Re(\tilde b)(\Im(a)\Im(\hat c)+\Re(a)\Re(\hat c) )}{\Re(\hat c)4\Re(a)^2(1+\hat q^2)+2\Re(a)\big(\Im(a)\Im(\hat c)+\Re(a)\Re(\hat c) \big)}.
	\end{aligned}
	$$
	As this is a simple rearrangement of $\kappa_S^2\leq\tilde\kappa_S^2$, we obtain our desired conclusion. Hence, we conclude that the ``frequency-adapted'' Darcy reduction also carries necessary but not sufficient stability about {\it every} wave, however, it is less precise than the stability information of the ``amplitude-adapted'' Darcy reduction.
\end{remark}

\section{Sufficiency of Eckhaus conditions, case $m=1$}\label{s:suff}

Up to now, we have determined asymptotics for the second-order
Taylor expansion at the origin of the dispersion relation
for the singular \eqref{ampeq} model. This gives useful necessary conditions \eqref{assneutralstab}
for stability of exponential solutions in terms of the signs of the real parts of 
second-order coefficients $\mu_t$ and $\mu_c$.
We shall show later that the second-order Taylor expansions for \eqref{ampeq} well-approximate those for the key neutral
modes of the exact spectrum of the linearized operator around spatially periodic convective Turing patterns,
hence these conditions are necessary also for diffusive stability of the full periodic waves.
Indeed, the proof relies strongly on the spectral perturbation analysis done above for \eqref{ampeq}, the exact spectra
being shown to coincide to that of a small perturbation of the Fourier symbol analyzed above.

The above study, however, concerns only the radius of convergence of the Taylor series about the origin, which
can be seen (see Section \ref{s:suff}) to 
correspond to $|\hat\sigma|\leq \eps/C$ in the scaling for \eqref{ampeq}.
To obtain {\it sufficient} conditions for stability, we must show stability for all $\hat \sigma\in \R$.

In this section we continue our study, treating the two-parameter matrix perturbation in $\eps$ and $\hat \sigma$
as a one-parameter perturbation in various regimes, in order to complete all cases and obtain equivalent conditions
for stability: specifically, to show that (strict) {\it asymptotic diffusive stability \eqref{assdiffstab} 
	is sufficient for stability,} just as (nonstrict) {\it asymptotic neutral stability \eqref{assneutralstab} 
	is necessary.}

We start by introducing the rescaled parameter $\check \sigma:= \hat \sigma/\eps$, or $\hat \sigma=\eps \check \sigma$,
resulting in a desingularized matrix perturbation problem
\ba\label{eq:desing1}
\l \bp u_0 \\ v_0 \\ w_0\ep&= \Bigg(
-\check \sigma^2 \eps^2 \bp \Re (a) & -\Im (a) & 0 \\ -\Im(a) & \Re(a)& 0\\ 2A_0\Re(g) & 2A_0\Im(g) & e_B\ep 
+i\check \sigma\bp O(\eps) & O(\eps) &  0 \\ O(\eps) & O(\eps) &  0 \\ 2A_0h + O(\eps)  & 0 & f\ep \\ 
&\quad+\bp  2A_0^2\Re(c) & 0& A_0\Re(d)\\ 2A_0^2\Im(c) & 0 & A_0\Im(d)\\
0 & 0  & 0\ep  \Bigg)\bp u_0 \\ v_0 \\ w_0\ep
\ea
corresponding to a parabolic singular perturbation of a relaxation system.

\subsection{Case (i) $|\check \sigma|\leq 1/C$, $C\gg 1$ ($\hat \sigma\leq \eps/C$)}\label{s:step1check}
On this region, we consider \eqref{desing} as a family of matrix perturbation problems in $\check \sigma$
parametrized by $\eps$, with coefficients uniformly bounded, and Taylor expand around $\check \sigma=0$
as described just above. The balancing procedure described in Section \ref{s:disp} 
then yields an analytic expansion in $\check \sigma$ of associated eigenvalues that is
uniformly convergent in a small ball with respect to $\eps$ sufficiently small.

Comparing to the expansions \eqref{muc}, \eqref{lamex} 
in $\sigma$ derived in Section \ref{s:nec}, we see that these are
\ba\label{rescdisp1}
\hat \lambda_s(\eps,\check \sigma)&= 2A_0^2\Re(c)+ O(\check \sigma),\
\hat \lambda_t&= \check \alpha_t \check \sigma + \check \mu_t \check \sigma^2 + O(\check \sigma^3),\\
\hat \lambda_c&= \check \alpha_c \check \sigma + \check \mu_c \check \sigma^2 + O(\check \sigma^3),
\ea
where
\ba\label{talpha's1}
\check \alpha_t&= \eps \alpha_t=  O(\eps) \, ,\\ 
\check \alpha_c&= \eps \alpha_c= i ( f+2A_0hp) +O(\eps),
\ea
and
\ba\label{tgenmu's1}
\check \mu_t&=\eps^2 \mu_t= \eps \mu_t^0 + O(1)  \, ,\\ 
\check \mu_c&= \eps^2 \mu_c =  2A_0  ph( f+2A_0hp) +O(\eps). 
\ea

Thus the stability conditions are sufficient as well as necessary on this region.

\subsection{Case (ii) $1/C\leq |\check \sigma|\leq C$ ($\eps/C\leq |\hat \sigma|\leq C\eps$)}\label{s:step2check}
On this region, we take a different point of view, considering
\eqref{eq:desing1} as a compact family of matrix perturbation problems parametrized by 
$$
1/C\leq |\check \sigma|\leq C,
$$
$C>0$ arbitrary, with perturbation parameter $\rho:=\eps \check \sigma = o(\check \sigma)$,
with again all coefficients uniformly bounded.
That is, we consider 
$$
M(\check \sigma):= M_0(\check \sigma)+ \rho M_1(\check \sigma) + \rho^2 M_2(\check \sigma)
$$
with
\ba\label{Ms1}
M_0&= \bp 2A_0^2\Re(c) & 0& A_0\Re(d)\\ 2A_0^2\Im(c) & 0 & A_0\Im(d)\\
2A_0hi\check \sigma   & 0 & fi\check \sigma \ep,\\
M_1&= 
i \bp -2\kappa \Im(a)& -2\kappa \Re(a)&  0 \\ 2\kappa \Re(a)& -2\kappa \Im(a) & 0 \\ 2A_0\kappa \Im(g)  & 0 & 0\ep ,\\
M_2&= -\bp \Re (a) & \Im (a) & 0 \\ -\Im(a) & \Re(a)& 0\\ 2A_0\Re(g) & 2A_0\Im(g) & e_B\ep. 
\ea

Evidently, $\det M_0\equiv 0$, hence we may factor the characteristic polynomial 
\be\label{char1}
p(\lambda;\check \sigma):= \det (\lambda -M_0(\check \sigma))
\ee
as $p(\lambda;\check \sigma)= \lambda q(\lambda;\check \sigma)$, where $q$ is quadratic,
namely
\be\label{factor1}
q(\lambda;\check \sigma)=\lambda^2 + \beta \lambda + \gamma,
\ee
where
\be\label{coeffs1}
\beta= -(2A_0^2\Re(c)+ fi\check \sigma), \quad \gamma= 2A_0^2 if\check \sigma\Big(\Re(c)-\frac{h\Re(d)}{f}\Big)=2A_0^2if\check\sigma\Re(\hat c),
\ee
where, following \eqref{rels},
$\hat{c}:=c-\frac{dh}{f}$
is the updated value of $c$ in the Darcy reduction.
\begin{remark}
	The factorization \eqref{factor1} can also be seen by direct expansion of the determinant along the central column of $M_0(\check\sigma)$.
\end{remark}

Let us investigate the appearance of a pure imaginary root $\lambda=i\tau$. This gives from
$$
0=q(i\tau; \check \sigma)=-\tau^2 +\beta i\tau +\gamma
$$
the pair of equations
\ba\label{pair1}
0=\Re q(i\tau;\check \sigma)&= -\tau^2 - \tau \Im (\beta)+ \Re(\gamma)
= \tau^2 - \tau f\check \sigma ,\\
0=\Im q(i\tau;\check \sigma)&= \tau \Re(\beta) + \Im(\gamma)
=  2A_0^2\Big(-\Re(c) \tau + \check \sigma f\Re(\hat{c}) \Big).
\ea

Solving, we obtain for $\tau=0$ the trivial solution $\tau=0$, $\check \sigma=0$, which we have specifically excluded by taking $\check \sigma \neq 0$, or else $\tau=0$, $\hat c=0$, which we exclude by the genericity assumption
\be\label{genc}
\hat c\neq 0,
\ee
and, otherwise, the linear system
\ba\label{linpair1}
\tau = f\check \sigma, \quad \tau = \check \sigma f\frac{\Re(\hat{c})}{\Re(c)},
\ea
which is consistent for $\check \sigma\neq 0$ if and only if 
$$
\Re(c)=\Re(\hat{c})\iff 0= h\Re(d).
$$

We exclude the latter case by the genericity assumption
\be\label{genz1}
h\Re(d)\neq 0.
\ee
Recall that first-order coupling coefficients are $d$ and $h$, with
the hyperbolic model decoupling if either of these vanish. Thus, failure of \eqref{genz1}
is related to a degenerate (at least partial) decoupling of the system.

Assuming \eqref{genz1}, we find that there are no imaginary eigenvalues of $q$ on $1/C\leq |\check \sigma|\leq C$.
Thus, by continuity/compactness, there is a uniform spectral gap between $\lambda=0$ and the remaining two
eigenvalues of $p(\cdot;\check \sigma)$. By standard matrix perturbation theory, this yields an analytic
branch $\lambda_0(\rho;\check \sigma)$ with $\lambda_0(0;\sigma)=0$, convergent for $|\rho|=\eps |\check \sigma|$
sufficiently small.
By standard matrix perturbation theory, we obtain also
continuous expansions in $\rho$ of the remaining two nonzero eigenvalues of $M_0$.

Stability of large (nonzero real part) eigenvalues is straightforward, at least for $|\check \sigma|$ bounded, 
since continuity of spectrum is then enough to conclude stability or instability on the whole domain. 
Thus, the problem reduces to checking stability at the left 
endpoint $|\check \sigma|=1/C$, already determined by Taylor series analysis.  
Stability of the perturbed zero mode requires more care, as, being neutral to lowest order, it will be
determined by the second-order term in the Taylor expansion.

\subsubsection{Matched expansion of the zero mode}\label{s:matchcheck}
As often the case with matrix perturbation theory, the abstract theory (as above) is useful for establishing
analyticity, but it is easier to compute coefficients by positing an analytic expansion and finding coefficients by
matching terms at successive orders. Namely, defining 
$\lambda_0(\rho)= \lambda_1 \rho + \lambda_2 \rho^2 + H.O.T.$,
we may expand the characteristic polynomial equation $0= \det (M(\rho;\check \sigma)-\lambda_0)$, after
factoring out a common factor $\rho$ from the second column, as
\ba\label{fcharexp1}
0 &=\det \bp
2A_0^2\Re(c)+\rho(-2i\kappa\Im(a)-\lambda_1 ) & 
 \rho\Im (a)  - 2i\kappa \Re(a)& A_0\Re(d)\\
2A_0^2\Im(c)+2i\kappa \Re(a)\rho  &  -2i\kappa \Im(a) - \lambda_1+\rho(\Re(a)-\lambda_2)
& A_0\Im(d) \\
2A_0  i\kappa \Im(g)\rho+ 2A_0 hi\check \sigma & \rho 2A_0\Im(g) & -\rho \lambda_1  + fi\check \sigma
\ep + O(\rho^2)\\
& =
\det \bp 2A_0^2\Re(c)& - 2i\kappa \Re(a)& A_0\Re(d)\\
2A_0^2\Im(c) &  -  2i\kappa \Im(a) - \lambda_1 & A_0\Im(d) \\
2A_0hi\check \sigma & 0 & fi\check \sigma  \ep + O(\rho)\\
&= 
i\check \sigma \det \bp 2A_0^2\Re(c)& - 2i\kappa \Re(a)& A_0\Re(d)\\
2A_0^2\Im(c) &  -  2i\kappa \Im(a) & A_0\Im(d) \\
2A_0h& 0 & f\ep 
-i\check \sigma \lambda_1 
\det \bp 2A_0^2\Re(c)& 0& A_0\Re(d)\\
2A_0^2\Im(c) &   1 & A_0\Im(d) \\
2A_0 h& 0 & f \ep + O(\rho),\\
\ea
where in the third inequality we have factored out $i\check \sigma$ from the third row.
Thus,
\ba \label{l11}
\lambda_1&= i \det \bp 2A_0^2\Re(c)& - 2\kappa \Re(a)& A_0\Re(d)\\
2A_0^2\Im(c) &  -  2\kappa \Im(a) & A_0\Im(d) \\ 2 A_0h& 0 & f\ep  /
\det \bp 2A_0^2\Re(c)& 0& A_0\Re(d)\\
2A_0^2\Im(c) &   1 & A_0\Im(d) \\
2A_0 h& 0 & f \ep \\
&=
i\Big(-2\kappa\Im(a)+2\kappa\Re(a)\frac{\Im(\tilde{c})}{\Re(\tilde{c})} \Big).
\ea

We note that {\it $\lambda_1$ is pure imaginary} (as noted earlier by symmetry), {\it and independent
	of $\check \sigma$}. 

\medskip

To determine the second-order coefficient $\lambda_2$, we now expand to the next order in \eqref{fcharexp1}
and match first-order coefficients in $\rho$.
Expanding the first line of \eqref{fcharexp1} and collecting first-order terms, we have
the term 
$$
-i\check \sigma \lambda_2 \rho 
\det \bp 2A_0^2\Re(c)& 0& A_0\Re(d)\\
2A_0^2\Im(c) &   1 & A_0\Im(d) \\
2A_0 h& 0 & f \ep 
=
-i\check \sigma \lambda_2 \rho 2A_0^2\big(\Re(c)f-h\Re(d)\big),
$$
as the only term involving $\lambda_2$, similarly as in the previous calculation.
The remaining terms can be calculated from the $\cO(\rho)$ coefficient in \eqref{fcharexp1} after using multilinearity to remove the $\lambda_2$ in the central column. In particular, denoting the remaining terms as $\rho\Sigma$, where
\ba\label{1terms1}
\Sigma&:=\frac{\d}{\d \rho}\det\bp 2A_0^2\Re(c)+\rho(-2i\kappa\Im(a)-\lambda_1 ) & -2i\kappa\Re(a)+\rho\Im(a)& A_0\Re(d)\\
	2A_0^2\Im(c)+2i\kappa\Re(a)\rho & (-2i\kappa\Im(a)-\lambda_1 )+\rho \Re(a) & A_0\Im(d)\\
	2A_0hi\check{\sigma}+2A_0i\kappa\Im(g)\rho & 2A_0\Im(g)\rho & fi\check{\sigma}-\rho\lambda_1
 \ep\Bigg\rvert_{\rho=0}
\ea
we see that we have 
\be\label{l21}
\lambda_2= \frac{i\Sigma}{ \check \sigma 2A_0^2\big(\Re(c)f-h\Re(d)\big)}.
\ee
While the actual expression for $\Sigma$ in \eqref{1terms1} is quite complicated, it is important to note that it is \emph{affine} in $\check{\sigma}$ as can readily be seen by performing cofactor expansion along the third column. 
Hence, to determine stability by $\sgn \lambda_2$, we need only compute the sign of $\Im(\Sigma)$.\\

The expression for $\Im(\Sigma)$ is also rather complicated, but the important point is that it too is affine in $\check \sigma$,
hence, by \eqref{l21}, $\Re(\lambda_2)$ is affine in $(1/\check \sigma)$.

Thus, to check stability, it is sufficient to check only at the endpoints, $|\check \sigma|=1/C$- where it is
already known from the Taylor expansion- and $|\check \sigma|=C$, $C>0$ arbitrarily large.
At the latter boundary, $|\check \sigma|$ large, we may either check by throwing out $O(1/\check \sigma)$ terms
in the just-completed computation, or else by comparing to a separate computation for $|\check \sigma|\geq C$,
$C>0$ sufficiently large, to be done in the next steps.
As the first possibility is rather complicated, we will follow that latter approach and defer this to the following
sections.

\br\label{affinermk}
In the vectorial case $m=1$, the computations in \eqref{1terms1}-\eqref{l21} become significantly 
more complicated, another point in which the scalar and vectorial cases significantly differ.
However, the end result is the same, giving a quantity affine in $1/\check \sigma$.
Likewise, the computations \eqref{pair1}-\eqref{linpair1}, though a bit more complicated, extend
to give the same result for $m>1$.  We carry out these computations in detail in Section \ref{s:ext}.
\er

\subsection{Case (iii) $C\leq |\check \sigma|\leq 1/C\eps$ ($C\eps \leq |\hat \sigma|\leq 1/C$)}\label{s:step3check}
Here, $|\check \sigma|\gg 1$ but $|\rho|\ll 1$.
On this region, reviewing $M_0$ in \eqref{Ms1}, we see that, assuming as usual invertibility of $f$,
there is now a dominant large eigenvalue $i\check \sigma f$, with associated left and right eigenvectors
\be\label{newlr}
l= \bp 2f^{-1}A_0h  & 0& 1\ep, \qquad r=\bp 0 & 0& 1\ep^T.
\ee
Motivated by this observation, we make the global change of coordinates 
$\tilde M(\sigma):=T^{-1} M(\sigma)T$, where
\be\label{Tdef}
T=\bp 1 & 0 & 0\\ 0 & 1 & 0\\ -2f^{-1}A_0 h & 0 & 1\ep,
\ee
yielding 
\be\label{tildeM}
\tilde M(\check \sigma)= \bp \check M & \theta_1\\ \theta_2 & if\check \sigma + 2f^{-1}A_0^2 h\Re (d) - e_B \rho^2\ep,
\ee
with $|\theta_j|, |\rho|=O(1)$ and 
\ba\label{checkM}
\check M(\check \sigma)&= \bp 2A_0^2 (\Re(c)-\Re(d)f^{-1}h) & 0\\2A_0^2(\Im (c)-\Im(d)f^{-1}h)& 0\ep
+\rho i\bp -2\kappa \Im(a) & -2\kappa \Re(a)\\ 2\kappa \Re(a) & - 2\kappa \Im (a)\ep
-\rho^2 \bp \Re(a)& \Im(a)\\ -\Im(a) & \Re(a)\ep.
\ea

Here, the lower righthand entry is dominated by the $\check \sigma$ order term $if\check \sigma\gg 1$,
whereas remaining terms are $O(1)$; in particular, there is separation of order $|\check \sigma|$ between
this entry and the spectra of the upper lefthand block, and off-diagonal blocks of order one.
Thus, there exists an exact diagonalizing transformation
of form $S=\Id+ O(1/|\check \sigma|)$ showing that the real part of the largest eigenvalue 
is $2f^{-1}A_0 \Re (d)+o(1)$, which has sign to first order independent of $\check \sigma$.
We have from the analysis of case (ii) that this largest eigenvalue must have negative real part at the
innermost endpoint $|\check\sigma|=C$, hence
$$
\hbox{\rm $2f^{-1}A_0^2 h\Re (d)<0$ and therefore also $2f^{-1}A_0^2h \Re (d)-e_B\rho^2 <0$,}
$$
yielding by exact diagonalization that this eigenvalue is stable on the entire $|\check \sigma|$-interval.

On the complementary- i.e., upper left- exactly diagonal block, we obtain, similarly an $O(1/|\check \sigma)$
perturbation of \eqref{checkM}.
But, \eqref{checkM} may be recognized as the linearized Darcy system, with $\hat \sigma=\eps \check \sigma$
replaced by $\rho$, on the interval $C \eps\leq |\rho|\leq C$.
By standard analysis of the Darcy system, the associated eigenvalues are to leading order $2A_0^2\Re(\hat c)$
and $\mu_t^0 \rho^2$.
By comparison at the boundary $|\check \sigma|=C$, the former must have negative real part, as one of the $m+1$ stable
order-one eigenvalues. We note that we have just verified indirectly the fact shown by direct
computation for the scalar case $m=1$ in Section \ref{s:Dnec} that the stability conditions imply $\Re(\hat c)<0$.
Our abstract argument here has the advantage that it generalizes to the vector case $m>1$.

As the latter has $\Re \mu_t<0$ by assumption/agreement of translational expansions for Darcy and full model,
we may conclude that the reduced Darcy system is {\it stable} for all $\rho\in \R$, by standard stability
theory for complex Ginzburg-Landau as may be obtained readily by the quadratic formula:
in particular, on the $|\rho|\ll 1$ region under consideration, where it may be obtained by inspection of
the second-order Taylor series.

However, the order $|\rho|^2$ perturbation could change sign under $o(1)$ perturbation, hence must be treated
differently. Fortunately, having already determined that the two other eigenvalues are order $1$ and $|\hat \sigma|\gg1$, we have a spectral separation of order one between those and the order $|\rho|^2$ eigenvalue, hence can
repeat the argument of case (ii) to conclude that the analytic expansion in $\rho$ deduced there remains valid
up to $|\rho|\ll 1$, and so the coefficient of $\rho^2$ in that expansion is affine in $\check \sigma$
as already determined, with domain of validity $\eps/C\leq |\check \sigma|\leq 1/C\eps$ including both those of cases
(ii) and (iii), hence this eigenvalue is stable iff and only if it is stable at the endpoints
$|\check \sigma|=\eps /C$ and $|\check \sigma|=1/C\eps$.
As observed in the treatment of case (ii), it is stable
at the inner endpoint $|\check \sigma|=\eps /C$ as the outer endpoint of region (i) already determined.
At the outer endpoint $|\check \sigma|=1/C\eps$, on the other hand, the associated Darcy eigenvalue
has strictly negative value bounded away from zero, which property persists under $o(1)$ perturbation to
give stability of the exactly diagonalized eigenvalue. Thus, we may conclude stability also of this smallest,
order $|\rho|^2$ eigenvalue.

Note that this in passing gives stability on the regime of case (ii), completing the analysis there.

\subsection{Case (iv) $1/C\eps \leq |\check \sigma|\leq 1/C\eps^2$ ($1/C \leq |\hat \sigma|\leq 1/C\eps$)}\label{s:step4check}
On this region, $|\rho|\geq 1/C$ and $1+\rho^2\ll |\check \sigma|$, hence the lower righthand entry
$$
if\check \sigma + 2f^{-1}A_0^2h \Re (d) - e_B \rho^2
$$
of $\tilde M(\check \sigma) $ in \eqref{tildeM} still dominates all other terms, giving a spectral separation
of order $|\check \sigma|$ between this and the upper lefthand diagonal block, with off-diagonal terms
$\theta_j$ now of order $1+|\rho|^2$.
It follows that there is an analytic exact block diagonalization of form 
$$
\Id + O((1+|\rho|^2)/|\check \sigma|= \Id + o(1)
$$
yielding in the lower righthand entry a real part $2f^{-1}A_0^2h \Re (d) +o(1)$ already verified as stable,
and in the $2\times 2$ upper lefhand block $o(1)$ perturbations of the eigenvalues of the
associated Darcy system. As the later have real parts bounded above by a constant times $-(1+|\rho|^2)$,
hence by a strictly negative constant, this property persists under $o(1)$ perturbation, giving
stability of the corresponding exactly diagonalized eigenvalues.

\subsection{Case (v) $ 1/C\eps^2\leq |\check \sigma|\leq C/\eps^2$ ($1/C\eps\leq |\hat \sigma|\leq C/\eps$)}\label{s:step5}
Defining $\sigma:= \eps^2 \check \sigma$, and factoring out $1/\eps^2$, we have the matrix perturbation 
problem $\eps^{-2}(M_0 + O(\eps))$, where
\be\label{lastM0}
M_0=\bp -\sigma^2\Re(a) & -\sigma^2 \Im(a) & 0 \\ \sigma^2 \Im(a) & \sigma^2 \Re(a) & 0\\
2A_0hi \sigma- \sigma^2 2A_0 \Re(g)& -\sigma^2 2A_0 \Im(g) & fi \sigma- \sigma^2 e_B\ep,
\ee
where $1/C\leq |\sigma|\leq C$, i.e., $\sigma $ varies on a compact range.
Evidently, $M_0$ is lower block-diagonal, with upper lefthand $2\times 2$ block of symmetric part negative
definite with spectral gap $\Re(a) \sigma^2$, and lower lefthand block having negative real part of spectral gap 
$e_B \sigma^2$. Thus, the spectra of $M_0$ have real part uniformly negative over the range under
consideration, and we obtain negativity of the perturbed spectra by continuity of spectra under perturbation.
Combining with previous cases, this verifies stability up to $|\check \sigma|\leq C/\eps^2$ provided
asymptotic diffusive stability \eqref{assdiffstab} holds.

\br\label{compatrmk}
In the vectorial case $m>1$, the real part of the spectrum of
the lower righthand block $(fi \sigma- \sigma^2 e_B)$ is not determined simply by the real part of
the spectrum of $e_B$.  However, $\Re\spec(fi \sigma- \sigma^2 e_B)<0$ by Assumption \eqref{compat},
hence by continuity/compactness we still obtain a uniform spectral gap. 
\er

\subsection{Case (vi) $ |\check \sigma|\geq C/\eps^2$ ($|\hat \sigma|\geq C/\eps$)}\label{s:step6}
In this case, the order $\rho^2=\check \sigma^2\eps^2$ terms dominate order $\check \sigma$ and other
terms, and stability follows in straightforward fashion from parabolicity of the truncated system.
We omit the details of this standard argument; see, e.g., \cite{MZ}.

\subsection{Final result}\label{s:fres}
Combining the results of Sections \ref{s:step1check}-\ref{s:step2check}, we have the following simple condition
for stability, analogous to those for the standard (nonsingular) complex Ginzburg-Landau system.

\begin{proposition}\label{stabcrit}
	Assuming the generic conditions of supercriticality \eqref{supercrit}, 
	noncharacteristicity of effective flux \eqref{fluxcond},
nontrivial Jordan structure \eqref{gencase}, and nonvanishing of $\Re \mu_t^0,\Re \mu_c^0$ in \eqref{genmu's},
	 asymptotic diffusive stability \eqref{assdiffstab}, or
	$\Re \mu_t^0, \Re \mu_c^0>0$ with $\mu_j^0$ as in \eqref{genmu's}, is \emph{necessary and sufficient} 
	for diffusive stability \eqref{dss} in the sense of Schneider of
	periodic (exponential) solutions \eqref{tw} of \eqref{ampeq} 
	with $m=1$, 
	for $0<\eps\leq \eps_0$ sufficiently small, where $\eps_0$ is uniform on compact
	parameter sets satisfying the above assumptions.
\end{proposition}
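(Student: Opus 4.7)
The plan is to combine the region-by-region analyses already carried out into a single necessary-and-sufficient statement. Necessity is exactly the content of Section \ref{s:nec}: the Taylor expansion \eqref{lamex}, together with the real-symmetry observation of Remark \ref{symmrmk}, yields for small $|\hat \sigma|$ neutral branches $\lambda_j(\hat \sigma) = i\alpha_j \hat \sigma + \mu_j \hat \sigma^2 + O(\hat \sigma^3)$ with $\alpha_j$ pure imaginary, so that $\Re \lambda_j(\hat \sigma) = (\Re \mu_j)\hat \sigma^2 + \ldots$ determines stability at leading order; failure of either sign condition in \eqref{assdiffstab} produces a branch crossing into the open right half-plane for arbitrarily small $\hat \sigma$, contradicting \eqref{dss}. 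So only sufficiency remains.

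For sufficiency, I would patch stability across the six frequency regions (i)--(vi) of Sections \ref{s:step1check}--\ref{s:step6}, expressed in the rescaled variable $\check \sigma = \hat \sigma / \eps$. Region (i) is covered by the convergent Taylor expansion, whose second-order coefficients are exactly those controlled by \eqref{assdiffstab}. Region (v), after a second rescaling $\sigma = \eps^2 \check \sigma$, becomes a compact perturbation of the block-triangular matrix \eqref{lastM0}, whose spectrum has real part uniformly bounded away from zero on $1/C \leq |\sigma| \leq C$ by parabolicity together with the compatibility assumption \eqref{compat}; region (vi) follows from outright parabolic dominance of the $\hat \sigma^2 M_2$ term. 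Regions (iii) and (iv) are handled using the global similarity $T$ of \eqref{Tdef}, which peels off the dominant eigenvalue near $if\check \sigma + 2f^{-1}A_0^2 h \Re(d) - e_B \rho^2$ and leaves a $2\times 2$ block that is an $o(1)$ perturbation of the linearized Darcy operator \eqref{eq:darcysys}; standard analysis of the shifted complex Ginzburg--Landau equation under \eqref{supercrit} and $\Re \mu_t^0$ of the correct sign gives diffusive decay on the small block, while the sign $2f^{-1}A_0^2 h \Re(d) < 0$ needed for the large eigenvalue is extracted indirectly by matching with the inner region (ii) at $|\check \sigma| = C$, precisely the abstract route used in Section \ref{s:step3check} to conclude $\Re(\hat c) < 0$ without direct computation.

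The main obstacle is region (ii), where $|\check \sigma| \in [1/C, C]$ and the small parameter is $\rho = \eps \check \sigma$. Here I would use $\det M_0 \equiv 0$ to factor the characteristic polynomial as $p(\lambda; \check \sigma) = \lambda\, q(\lambda; \check \sigma)$ and verify, from the real/imaginary system \eqref{pair1} for a prospective imaginary root $\lambda = i\tau$ of $q$, that the genericity conditions \eqref{genc} and \eqref{genz1} preclude any such root on $[1/C, C]$; combined with noncharacteristicity \eqref{fluxcond}, this yields a uniform spectral gap and an analytic bifurcating branch $\lambda_0(\rho; \check \sigma) = \rho \lambda_1 + \rho^2 \lambda_2 + \ldots$. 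Matching coefficients in the characteristic polynomial shows $\lambda_1$ is pure imaginary and $\check \sigma$-independent, while the crucial observation is that $\lambda_2(\check \sigma)$ is \emph{affine in $1/\check \sigma$}, visible by cofactor expansion along the third column of the determinant in \eqref{1terms1}. Hence $\Re \lambda_2$ attains its extrema on $[1/C, C]$ at the endpoints, so stability of the perturbed zero mode in region (ii) reduces to stability at $|\check \sigma| = 1/C$, furnished by region (i), and at $|\check \sigma| = C$, furnished by region (iii) — which must therefore be established \emph{without} reference to region (ii), closing the potentially circular dependence. Uniformity of $\eps_0$ on compact parameter sets then follows since every perturbation estimate above depends continuously on the coefficients $a, b, c, d, f, g, h, e_B$ and on the strict margins in the genericity hypotheses, all of which stay bounded away from degeneracy on compact subsets of the admissible parameter space.
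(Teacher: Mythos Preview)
Your proposal follows the paper's approach closely and is essentially correct, but there is one gap in how you close the circularity between regions (ii) and (iii). In region (iii), where $C \leq |\check\sigma| \leq 1/(C\eps)$ so that $\rho = \eps\check\sigma$ ranges over $[C\eps, 1/C]$, the Darcy translational eigenvalue is of order $\mu_t^0 \rho^2$, which can be as small as $O(\eps^2)$ near the inner boundary $|\check\sigma|=C$. The $2\times 2$ block obtained after the similarity $T$ differs from the Darcy operator by $O(1/|\check\sigma|)$, which is $O(1/C)$ there, not by anything small compared to $\rho^2$. Hence ``standard analysis of the shifted complex Ginzburg--Landau equation'' does \emph{not} directly control the sign of the translational mode throughout region (iii), and treating region (iii) independently of region (ii), as you propose, does not actually furnish the outer endpoint check at $|\check\sigma|=C$.

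The paper's resolution (Section~\ref{s:step3check}, final paragraph) is to observe that the spectral separation between the small mode and the remaining eigenvalues persists across the \emph{union} of regions (ii) and (iii), so the analytic expansion $\lambda_0 = \lambda_1\rho + \lambda_2(\check\sigma)\rho^2 + \ldots$ and the affine-in-$1/\check\sigma$ structure of $\Re\lambda_2$ extend to the full interval $1/C \leq |\check\sigma| \leq 1/(C\eps)$. One then checks the sign of $\Re\lambda_2$ at the inner endpoint $|\check\sigma| = 1/C$ via region (i), and at the \emph{outer} endpoint $|\check\sigma| = 1/(C\eps)$, where $|\rho| = 1/C$ is bounded away from zero so that the Darcy translational eigenvalue is of definite order and survives the now $O(C\eps)$ perturbation. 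This closes regions (ii) and (iii) simultaneously, with no circularity. Apart from this point, your outline matches the paper's proof, including the uniformity argument.
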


\begin{proof}
	For fixed model parameters, the result follows for $\eps>0$ sufficiently small, by
	the arguments of cases (i)-(vi) above, where the upper bound $\eps_0$ needed for $\eps$ depends
	only on lower bounds for the various quantities assumed to be nonvanishing.
	As these quantities are continuous, their minima are uniformly bounded from below on compact
	parameter sets where they do not vanish, and so $\eps_0>0$ may be chosen uniformly
	for compact parameter sets on which our hypotheses hold.
\end{proof}

Coefficients $\mu_t$ and $\mu_t$ are explicitly computable, giving simple necessary and sufficient
conditions for stability on a par with those for the classical (nonsingular) complex Ginzburg-Landau equation.

\br\label{remrmk}
The above argument is reminiscent of multi-parameter spectral perturbation computations
carried out in \cite[\S 4]{JNRZ2} and \cite[\S 2]{BJNRZ}, in which,
similarly, stability in successive regions is related back ultimately
to stability of a Taylor expansion about the origin.
In all three of these cases, there is an interesting analogy to relaxation systems and 
the studies of Kawashima, Shizuta, Zeng, and others \cite{ShK,Ze}.
For other examples of multi-parameter expansion as related to spectral stability,
see, e.g., \cite{PZ,BHZ,FS1,FS2}.
\er

\section{Extension to $m$ conservation laws}\label{s:ext}
As noted earlier, the arguments in Section \ref{s:nec} for the scalar case $m=1$
extend for the most part word for word to the vector case $m>1$, with the various symbols now representing
vectors and matrices rather than scalars, modulo the details
indicated in Remarks \ref{compatrmk},  \ref{splitrmk}, and \ref{affinermk}.  
We shall therefore not repeat the full argument here, but only supply the specific computations in cases (i) and
(ii) that are needed to treat the aspects in which the system case requires further analysis beyond what
was given for the scalar case.

\subsection{Region (i): Taylor expansion for vectorial case}\label{s:sysmatched}
There are two main steps in adapting our proof of sufficiency in case (i). The first is to carry out the Kato-style expansions for $\mu_{c,i}$ from the $m$ large eigenvalues and then to use matched asymptotics to obtain $\mu_t$ from the small eigenvalue.\\


We start as before with the preliminary diagonalization. First, we define a vectorized $p$, thought of as a row vector, by $$p:=-\frac{\Re(d)}{2A_0\Re(c)},$$ while keeping the same value of $q$ as the in the scalar case. We further denote by $e_i$ the standard basis, regarded as column vectors, of $\RR^m$. We will let $0$ denote an appropriately sized array whose entries are all $0$. Then our left and right (generalized) eigenvectors become
\be\label{vector-left}
L_s=\bp 1 & 0 & -p\ep, \qquad L_t=\bp q & 1 & -qp\ep,\qquad L_{c,i}=\bp 0 & 0 & e_i^T\ep,
\ee
and
\be\label{vector-right}
R_s=\bp 1 \\ -q\\ 0\ep,\qquad R_t=\bp 0 \\ 1\\ 0 \ep,\qquad R_{c,i}=\bp p_i \\ 0 \\ e_i\ep,
\ee
where $p_i$ denotes the $i$-th entry of $p$. From here, a similar computation to the $m=1$ case shows that this basis block diagonalizes $M_0$, with $\hat{M}_0$ a matrix of the form
\be\label{vector-hatm0}
\hat{M}_0=\bp 0 & A_0(\Im(d)+q\Re(d) )\\
0 & 0\ep.
\ee

In the vector case, the matrix $M_1$ is given by
$$
M_1=i\bp -2\kappa\Im(a) & -2\kappa\Re(a) & 0\\ 2\kappa\Re(a) & -2\kappa\Im(a) & 0\\ 2A_0h\e^{-1}+2A_0\kappa\Im(g) & 0 & \e^{-1} f\ep.
$$
Computing the action of $M_1$ on the left eigenvectors yields
$$
\begin{aligned}
	L_sM_1&=i\bp-2\kappa\Im(a)-2A_0p\big(\e^{-1}h+\kappa\Im(g) \big) & -2\kappa\Re(a) & -\e^{-1}pf\ep,\\
	L_tM_1&=i\bp -2\kappa\Im(a)q+2\kappa\Re(a)-2A_0qp\big(\e^{-1}h+\kappa\Im(g)\big)& -2\kappa\Re(a)q-2\kappa\Im(a) & -\e^{-1}qpf\ep,\\
	L_{c,i}M_1&=i\bp 2A_0h_i\e^{-1}+2\kappa A_0\Im(g_i) & 0 & e_i^Tf\ep. 
\end{aligned}
$$
We can then readily obtain the expression for $M_1$ in the eigenbasis as
$$
M_1=\bp m_1 & s_1\\ s_2 & \hat M_1\ep,
$$
with $m_1$ scalar, $s_1$ an $m+1$-row vector, $s_2$ an $m+1$-column vector, and $\hat M_1$ an $m+1\times m+1$ matrix. More specifically,
\ba\label{vectorM1s}
m_1&= L_sM_1R_s=-\e^{-1}2A_0\vec{p}\vec{h}i+O(1),\\
s_1&=L_sM_1\bp R_t & R_{c,i}\ep=i\bp -2\kappa\Re(a) & -\e^{-1}p(f+2A_0hp)\ep+i\bp 0 & O(1)\ep,\\
s_2&=\bp L_t\\ L_{c,i}\ep M_1R_s=i\bp -2\e^{-1}A_0qph+O(1)\\ 2A_0h\e^{-1}\ep,\\
\hat M_1&= \bp L_t \\ L_{c,i}\ep M_1\bp R_t & R_{c,i}\ep=i\bp -2\kappa\Re(a)q-2\kappa\Im(a) & -\e^{-1}qp(f+2A_0hp)+O(1)\\ 0 & \e^{-1}(f+2A_0hp)+O(1)\ep.
\ea

We can perform a similar first order diagonalization, by choosing
$$
\cT(\hat\sigma,\e)=\bp 1 & 0 \\ \hat\sigma t_2 & Id_{m+1}\ep\bp 1 & \hat\sigma t_1 \\ 0 & Id_{m+1}\ep,
$$
with corresponding inverse
$$
\cT(\hat\sigma,\e)^{-1}=\bp 1 & -\hat\sigma t_1 \\ 0 & Id_{m+1}\ep\bp 1 & 0 \\ -\hat\sigma t_2 & Id_{m+1}\ep.
$$

Setting $N(\e,\hat \s)=\cT M(\e,\hat \s)\cT^{-1}$ as before, we obtain
$$N(\e,\hat \s)=N_0+\hat \s N_1(\e)+\hat \s^2 N_2(\e)+ \hat \s^3 N_3(\e)+O(\hat \s^4),$$
with $N_0=M_0$, and $N_1$ given in block form
$$
N_1=\bp m_1 & s_1+t_1(\hat M_0-m_0)\\s_2-(\hat M_0-m_0)t_2 & \hat M_1\ep,
$$
from which we discover that choosing
\be\label{vectorts}
t_1=- s_1(\hat M_0-m_0)^{-1},\qquad t_2=(\hat M_0-m_0)^{-1}s_2,
\ee
makes $N_1$ block diagonal. Under this choice, we find that the coefficient of $\hat\s^2$, $N_2$, is of the form
\ba
N_2&=-M_2+\bp 0 & t_1\\ t_2 & 0\ep M_1+M_1 \bp 0 & -t_1 \\ -t_2 & 0 \ep\\
&\quad+\bp 0 & t_1 \\ t_2  &0\ep M_0\bp 0 & -t_1\\ -t_2 & 0\ep +M_0\bp t_1t_2 & 0 \\ 0 & 0\ep+\bp 0 &0\\ 0 & t_2t_1\ep M_0.
\ea

As $M_2$ and the left/right eigenvectors are all $O(1)$, we see that to leading order, $N_2$ is given by
$$
N_2=\bp 0 & t_1\\ t_2 & 0\ep M_1+M_1 \bp 0 & -t_1 \\ -t_2 & 0 \ep+\bp 0 & t_1 \\ t_2  &0\ep M_0\bp 0 & -t_1\\ -t_2 & 0\ep+M_0\bp t_1t_2 & 0 \\ 0 & 0\ep+\bp 0 & 0\\ 0 & t_2t_1\ep M_0+O(1).
$$
Computing the requisite matrix products and zooming in on the bottom $m+1\times m+1$ block, we find
$$
\hat N_2= t_2s_1-s_2t_1+t_2t_1 (\hat M_0-m_0)+O(1).
$$
Plugging in \eqref{vectorts}, we then find that
\be\label{vectorhatN2}
\hat N_2=- s_2t_1+O(1).
\ee
We observe that, as before, $\hat M_0^2=0$, and so $(\hat M_0-m_0)^{-1}$ is given by
\be\label{vectorreducedresolvent}
(\hat M_0-m_0)^{-1}=-\frac{1}{m_0}\Big(Id+\frac{1}{m_0}\hat M_0 \Big).
\ee
In particular, we discover that
\be\label{vectort1exp}
t_1=-s_1(\hat M_0-m_0)^{-1}=\frac{1}{m_0}s_1+\frac{4A_0i\kappa\Re(a)}{m_0^2}\bp 0 & \Im(d)+q\Re(d)\ep.
\ee
So to leading order, $\hat N_2$ is $-m_0^{-1}s_2s_1$. Expanding \eqref{vectorhatN2} out, we get
\be\label{vectorhatN2exp}
\hat N_2=\frac{1}{m_0}\bp 4\kappa\Re(a)A_0qph\e^{-1} & 2A_0qphp\big(f+2A_0hp \big)\e^{-2}\\ -4\kappa\Re(a) A_0h\e^{-1} & -2A_0hp(f+2A_0hp)\e^{-2} \ep+ H.O.T.
\ee
The final piece of information we need before we ``balance'' the Jordan block away is $\hat N_3$, the bottom $m+1\times m+1$ block of $N_3$. We begin by collecting cubic terms in our expansion to get
$$
N_3= M_1\bp t_1 t_2 & 0\\ 0 & 0\ep+\bp 0 & 0 \\ 0 & t_2t_1\ep M_1+\bp 0 & t_1 \\ t_2 & 0 \ep M_1\bp 0 & - t_1\\ - t_2 & 0 \ep+ H.O.T.
$$
From this, we can extract the bottom right block as
\be\label{vectorhatN3}
\hat N_3=t_2t_1\hat M_1-m_1t_2t_1+ H.O.T.=t_2t_1(\hat M_1-m_1)+ H.O.T.
\ee
We observe from \eqref{vector-hatm0} and \eqref{vectorM1s} that the reduced spectral problem
$$
\hat N(\hat \s,\e)=\hat M_0+\s \hat M_1+\hat\s^2\hat N_2+\hat\s^3\hat N_3+O(\hat\s^4),
$$
is, to first order in $\hat\s$, upper block triangular. We then observe that we only need the bottom left block of $\hat N_3$ for our balancing transformation. We observe that the first column of $ t_2t_1$ is $O(\e^{-1})$, as the first entry of $ t_1$ is $O(1)$ and $t_2=O(\e^{-1})$. In particular, since $\hat M_1$ is also upper block triangular, we find that the bottom left block of $\hat N_3$ is $O(\e^{-2})$, compared to the expected order of $O(\e^{-3})$.\\

In order to find the bottom left block of $\hat N_3$, we begin by observing that
$$
t_2=-\frac{i}{m_0}\bp O(\e^{-1})\\ 2A_0h\e^{-1}+O(1)\ep.
$$
Hence, we find that
$$
\hat N_3=-\frac{i}{m_0^2}\bp O(\e^{-1})\\ 2A_0 h\e^{-1}\ep\bp 2\kappa\Re(a) & O(\e^{-1})\ep \bp 2A_0ph\e^{-1} & -\e^{-1} qp f \\ 0 & \e^{-1}(f+2A_0hp)+\e^{-1}2A_0ph Id_m\ep+ H.O.T.
$$
Computing the above products, we find
\be\label{vectorhatN3exp}
\hat N_3=-\frac{i}{m_0^2}\bp O(\e^{-2}) & O(\e^{-3})\\ 8\e^{-2}\kappa \Re(a) A_0^2hph+O(\e^{-1}) & O(\e^{-3})\ep.
\ee

We now ``balance'' $\hat N(\e,\hat \s)$ by sending $\hat N(\e,\hat \s)\to O(\e,\hat \s):=\cS(\hat \s)\hat N(\e,\s)\cS(\s)^{-1}$ with $\cS(\hat\s)$ given by
$$
\cS(\hat \s):=\bp i\hat \s & 0 \\ 0 & Id_{m}\ep.
$$
Expanding $O$ in powers of $\hat\s$ as $O(\e,\hat \s)=\hat \s O_1+\hat \s^2 O_2+O(\hat \s^3)$, we obtain
\ba\label{vectorO}
O(\e,\hat\s)&=i\hat\s \bp -2\kappa\big(\Im(a)+q\Re(a)\big) & A_0\big(\Im(d)+q\Re(d) \big)\\ \frac{4\kappa A_0\Re(a)}{m_0}h\e^{-1}+O(1) & \e^{-1}(f+2A_0hp)+O(1)\ep\\
&\quad+\hat \s^2\bp \frac{4\kappa\Re(a)A_0qph}{m_0}\e^{-1}+O(1) & \e^{-1}qp(f+2A_0hp)+O(1)\\ -\frac{8\kappa \Re(a)A_0^2hph}{m_0^2}\e^{-2}+O(\e^{-1}) & -\frac{2A_0}{m_0}hp(f+2A_0hp)\e^{-2}+O(\e^{-1})\ep+O(\hat \s^3).   
\ea

For the purposes of analyticity of the dispersion relations, we require the reduced convection matrix $f+2A_0hp$ to be invertible, with simple eigenvalues. 
Let us recall that the definition of $p$ implies that the reduced convection matrix is independent of $\kappa$ as $p=-\Re(d)/(2A_0\Re(c))$. Under this assumption we obtain $m+1$ analytic in $\hat \s$ dispersion relations which we denote $\l_t$ and $\l_{c,i}$, $i=1,...m$, admitting the expansions
\ba
\l_t(\e,\hat\s)&=i\a_t\hat\s+\mu_t\hat\s^2+O(\hat\s^3),\\
\l_{c,i}(\e,\hat\s)&=i\a_{c,i}\hat\s +\mu_{c,i}\hat\s^2+(\hat\s^3),
\ea
with $\a_{c,i}$ admitting the expansion
\be
\a_{c,i}=\e^{-1}\a_{c,i}^0+O(1),
\ee
where $\a_{c,i}^0\in\spec(f+2A_0hp)$. 

We note that $\a_t$ is real-valued by standard matrix perturbation arguments (as in the scalar case).
However, in the vector case, the $\a_{c,i}$ are not necessarily real-valued, hence we require
the additional, first-order stability condition \eqref{preccond} that $i\a_{c,i}$ are pure imaginary. 
If there are $\a_{c,i}$ with with nonzero real part,
corresponding to a conjugate pair of eigenvalues of the effective flux matrix
$( f+2A_0hp)$, then stability fails in the first-order term of the spectral expansion,
for $\hat \sigma$ of appropriate sign.
If on the other hand all $\a_{c,i}$ are real and distinct, then we may conclude by symmetry that not only the
leading order contribution, but the entire first-order terms of these eigenvalues are pure imaginary,
and thus stability holds (neutrally!) at first order (see Remark \ref{symmrmk}).
Thus, we require that the reduced convection matrix $f+2A_0hp$ be strictly hyperbolic, and noncharacteristic.
Again, this makes very much sense from the point of view of hyperbolic relaxation on systems, 
as the effective flux is none other than the convection matrix for the associated 
Chapman-Enskog equilibrium system, in which 
$\eps^{-1}$ term is required to be in equilibrium, or vanishing to lowest order (see Remark \ref{relaxrmk}).

To evaluate the $\mu_{c,i}$, we let $r_i$ and $\ell_i$ be the right and left eigenvectors of $f+2A_0hp$ respectively, and set
\be\label{vector-lrs}
r_{c,i}=\bp 0 \\ r_i\ep+O(\e),\qquad \ell_{c,i}=\bp c_i & \ell_i \ep+O(\e), 
\ee
where $c_i$ is a constant whose value is chosen so that $\ell_{c,i}$ is a left eigenvector of $O_1$. We remark that we will not need the precise value of $c_i$ in order to determine $\mu_{c,i}$ to leading order. To find $\mu_{c,i}$ to leading order, we compute $\ell_{c,i} O_2 r_{c,i}$, where we find
\be\label{vectormucs}
\mu_{c,i}=\e^{-2}\Big(-\frac{2A_0\a_{c,i}^0 \ell_i h pr_i}{m_0}  \Big)+O(\e^{-1}),
\ee
or after plugging in the definition of $p$ and $m_0$,
\ba\label{vectormucs2}
\mu_{c,i}&=   
\eps^{-2}\ell_i \frac{h\Re(d) }{2A_0^2 \Re(c)}\Big( f- h \frac{\Re(d)}{\Re(c)}\Big)r_i + O(\e^{-1})
\\& =\e^{-2}\frac{\a_{c,i}^0 \ell_i h\Re (d)r_i}{2A_0^2\Re(c)^2}+O(\e^{-1}). 
\ea

The other step in adapting Case (i) is to perform the matched asymptotics to find the coefficients $\a_t$ and $\mu_t$. As before, Kato-style expansions are much more complicated than matched asymptotics due to $\a_t$ and $\mu_t$ being lower order than $\a_{c,i}$ and $\mu_{c,i}$. In a similar fashion to the $m=1$ case, we can factor out an $i\hat \s$ from the last $m$ rows of $M(\e,\hat\s)$ and a copy of $i\hat\s$ from the second column, leading to an expansion of the form
$$
\det(M(\e,\hat\s)-\l_t Id_{m+2} )=(i\hat\s)^{m+1}\big(P_0+i\hat\s P_1+ H.O.T. \big),
$$
with $P_0$ given by
\be\label{vecP0}
P_0:=\det \bp 2A_0^2\Re(c) & -2\kappa\Re(a) & A_0\Re(d)\\ 2A_0^2\Im(c) & -2\kappa\Im(a)-\a_t & A_0\Im(d)\\ 2A_0 h\e^{-1}+2A_0\kappa\Im(g) & 2A_0\Im(g) & \e^{-1}f-\a_t Id_m\ep.
\ee
Let $f_1,...,f_m$ denote the eigenvalues of $f$. By our Kato expansion, we know that we can take $\a_t=O(1)$, and so the bottom right $m\times m$ block is invertible. Hence, by standard identities for block matrices
$$
\begin{aligned}
	P_0&=\Big(\prod_{i=1}^m\big(\e^{-1}f_i-\a_t \big)\Big)\det\Bigg(\bp 2A_0^2\Re(c) & -2\kappa\Re(a)\\ 2A_0^2\Im(c) & -2\kappa\Im(a)-\a_t\ep\\
	&\quad-\bp A_0\Re( d)\\ A_0\Im(d)\ep (\e^{-1}f-\a_t\Id_m)^{-1}\bp 2A_0h\e^{-1}+2A_0\kappa\Im( g) & 2A_0\Im(g)\ep   \Bigg).
\end{aligned}
$$

We note that
$$
\big(\e^{-1}f-\a_t Id_m \big)^{-1}=\e f^{-1}+O(\e^2),
$$
and so $P_0=0$ can be approximated by
$$
P_0=\det\Bigg(\bp 2A_0^2\Re(c) & -2\kappa\Re(a)\\ 2A_0^2\Im(c) & -2\kappa\Im(a)-\a_t\ep-\bp A_0\Re(d)\\ A_0\Im(d)\ep f^{-1}\bp 2A_0 h & 0\ep+ H.O.T.  \Bigg)=0.
$$
We then conclude that
\be\label{vectorat}
\a_t=-2\kappa\Im(a)+2\kappa\Re(a)\frac{\Re(c)-\Re(d)f^{-1}h}{\Im(c)-\Im(d)f^{-1}h }+O(\e), 
\ee
which matches the scalar case as $\hat c=c-df^{-1}h$ in the systems case.\\

Turning to $\mu_t$, we find that $P_1$ is given by
\ba\label{vecP1}
P_1&:=\det \bp -2\kappa\Im(a)-\a_t & -2\kappa\Re(a) & A_0\Re(d)\\ 2\kappa\Re(a) & -2\kappa\Im(a)-\a_t & A_0\Im(d)\\ 2A_0\Re(g) & 2A_0\Im(g) & \e^{-1}f-\a_t Id_m\ep\\
&\quad +
\det \bp 2A_0^2\Re(c) & -\Im(a) & A_0\Re(d)\\ 2A_0^2\Im(c) & \Re(a)+\mu_t & A_0\Im(d)\\ 2A_0 h\e^{-1}+2A_0\kappa\Im(g) & 0 & \e^{-1}f-\a_t Id_m\ep\\
&\quad+ \sum_{j=1}^m\det \bp 2A_0^2\Re(c) & -2\kappa\Re(a) & \widehat{A_0\Re(d)}_j  \\ 2A_0^2\Im(c) & -2\kappa\Im(a)-\a_t & \widehat{A_0\Im(d)}_j\\ 2A_0h\e^{-1}+2A_0\kappa\Im(g) & 2A_0\Im(g) & \cP_{j}\ep,
\ea
where for a vector $v$, $\widehat{v}_j$ denotes the vector $v$ with the $j$-th entry set to zero, and $\cP_j$ denotes the $m\times m$ matrix
$$
\cP_j:=\bp \e^{-1}f_{11}-\a_t & \e^{-1}f_{12} & ... & \e^{-1}f_{1j-1} & [e_B]_{1j} & \e^{-1}_{1j+1} & ... \e^{-1}f_{1m}\\ 
\e^{-1}f_{21} & \e^{-1}f_{22}-\a_t  & ... & \e^{-1}f_{2j-1} & [e_B]_{2j} & \e^{-1}_{2j+1} & ... \e^{-1}f_{2m}\\
\vdots\\
\e^{-1}f_{j1} & \e^{-1}f_{j2}& ... &\e^{-1}f_{jj-1} & [e_B]_{jj}+\mu_t & \e^{-1}f_{jj+1} & ... &\e^{-1}f_{jm}\\
\vdots\\
\e^{-1}f_{m1} & \e^{-1}f_{m2}& ... & \e^{-1} f_{mj-1} & [e_B]_{mj} & \e^{-1}f_{mj+1} & ... &\e^{-1}f_{mm}-\a_t\ep.
$$
In \eqref{vecP1}, we note that the dominant term is given by
$$
\begin{aligned}
	&\det \bp -2\kappa\Im(a)-\a_t & -2\kappa\Re(a) & A_0\Re(d)\\ 2\kappa\Re(a) & -2\kappa\Im(a)-\a_t & A_0\Im(d)\\ 2A_0\Re(g) & 2A_0\Im(g) & \e^{-1}f-\a_t Id_m\ep \\
	&\quad +
\det \bp 2A_0^2\Re(c) & -\Im(a) & A_0\Re(d)\\ 2A_0^2\Im(c) & \Re(a)+\mu_t & A_0\Im(d)\\ 2A_0 h\e^{-1}+2A_0\kappa\Im(g) & 0 & \e^{-1}f-\a_t Id_m\ep,
\end{aligned}
$$
as each entry in the sum over $j$ in \eqref{vecP1} has one less power of $\e$, most readily seen by taking the $j$-th entry and performing cofactor expansion along the $j$-th column. Indeed, we notice that as the first two entries in that column are zero, there are in each cofactor $m-1$ rows of size $\e^{-1}$ where as the first two matrices have $m$ rows of size $\e^{-1}$. Taking advantage of the same trick as we did for $P_0$, we observe that
$$
\begin{aligned}
	P_1&=\Big(\prod_{i=1}^m\big(\e^{-1}f_i-\a_t \big)\Big)\det\Bigg(\bp -2\kappa\Im(a)-\a_t & -2\kappa\Re(a) \\ 2\kappa\Re(a) & -2\kappa\Im(a)-\a_t \ep+O(\e)\Bigg)\\
	&\quad+\Big(\prod_{i=1}^m\big(\e^{-1}f_i-\a_t \big)\Big)\det\Bigg(\bp 2A_0^2\Re(c) & -\Im(a)\\ 2A_0^2\Im(c) & \Re(a)+\mu_t \ep-\bp A_0\Re(d)\\ A_0\Im(d)\ep f^{-1}\bp 2A_0h & 0 \ep+O(\e)\Bigg)\\
	&+O(\e^{m-1}).
\end{aligned}
$$
Setting $P_1=0$, and computing the leading $2\times 2$ determinants and solving for $\mu_t$, we obtain
\be\label{vecmut}
\mu_t=-\frac{\big(2\kappa\Im(a)+\a_t\big)^2+4\Re(a)^2\kappa^2+2A_0^2\big(\Re(a)\Re(\hat c)+\Im(a)\Im(\hat c) \big) }{2A_0^2\Re(\hat c) }+O(\e),
\ee
which we remark matches the scalar case.

\br
Much like the scalar case, the $\a_{c,i}$ and $\mu_{c,i}$ can also be obtained in this manner. However, unlike the scalar case, the procedure is not as simple. Indeed, our trick here was to invert the bottom right block of \eqref{vecP0}, however, if $\a_{c,i}\sim\e^{-1}$, then there is no guarantee of that holding. Instead, we use the invertibility of the top left corner to instead obtain
$$
\begin{aligned}
	P_0&=2A_0^2\Re(c)\det\Bigg(\bp -2\kappa\Im(a) & A_0\Im(d)\\ 2A_0\Im( g) & \e^{-1}f\ep-\frac{1}{2A_0^2\Re(c)}\bp 2A_0^2\Im(c)\\ 2A_0h\e^{-1}+2A_0\kappa\Im(g)\ep\bp-2\kappa\Re(a) & A_0\Re(d)\ep\\
	&\quad-\a_{c,i}Id_{m+1} \Bigg).
\end{aligned}
$$
Collecting the leading order terms, we discover
$$
P_0=2A_0^2\Re(c)\det\Bigg(\bp O(1) & O(1)\\ O(\e^{-1}) & \e^{-1}\Big(f-\frac{h\Re(d)}{\Re(c)}\Big)+O(1)\ep-\a_{c,i}Id_{m+1} \Bigg).
$$
As $P_0$ is approximately the determinant of a lower block triangular matrix, we conclude that $\a_{c,i}$ is an eigenvalue of $f-h\Re(d)/\Re(c)$ to leading order, matching our earlier conclusion.\\

Turning to $\mu_{c,i}$, the determinant independent of $\mu_{c,i}$ in \eqref{vecP1} is to leading order given by
$$
\begin{aligned}
	c_4&:=\det \bp -2\kappa\Im(a)-\a_{c,i} & -2\kappa\Re(a) & A_0\Re(d)\\ 2\kappa\Re(a) & -2\kappa\Im(a)-\a_{c,i} & A_0\Im(d)\\ 2A_0\Re(g) & 2A_0\Im( g) & \e^{-1}f-\a_{c,i} Id_m\ep\\
	&\quad =\a_{c,i}^2\det\big(\e^{-1}f-\a_{c,i}Id_m \big)+O(\e^{-(m+1)}).
\end{aligned}
$$
The first matrix outside of the sum yields a coefficient of $\mu_{c,i}$ of the form
$$
\det \bp 2A_0^2\Re(c) & A_0\Re(d)\\ 2A_0h\e^{-1}+2A_0\kappa\Im(g)& \e^{-1}f-\a_{c,i} Id_m\ep.
$$
The same block determinant trick that gave us $\a_{c,i}$ shows that this determinant is $O(\e^{-(m-1)})$, which is an acceptable error term for our purposes.

The remaining sum can be seen to give a coefficient of $\mu_{c,i}$ of the form
$$
c_3:=-2A_0^2\Re(c)\a_{c,i}\operatorname{Tr}\Big(\operatorname{cof}(\e^{-1}f-\a_{c,i}Id_m-\e^{-1}\frac{h\Re(d)}{\Re(c)})+ H.O.T.\Big),
$$
where $\operatorname{cof}(A)$ denotes the cofactor matrix of $A$. We recall from \cite{DPTZ} that the adjugate matrix, $\operatorname{adj}(A)=\operatorname{cof}(A)^T$, has the same left/right eigenvectors as $A$, with the eigenvectors $(\ell_i,r_i)$ having eigenvalue $\prod_{j\not=i}\l_j$, where $\l_i$ denotes the eigenvalues of $A$. In particular, the cofactor matrix has eigenvalues $\prod_{j\not=i}\l_j$, where $i=1,...,m$, and so by assuming $A$ has a single eigenvalue $\l_i=0$ we have that
$$
\operatorname{Tr}\Big(\operatorname{cof}(\e^{-1}f-\a_{c,i}Id_m-\e^{-1}\frac{h\Re(d)}{\Re(c)})+
H.O.T.\Big)=\prod_{j\not=i}\a_{c,j}+ H.O.T.\sim\e^{-(m-1)}.
$$
Hence, $c_3\sim\e^{-m}$ is the dominant coefficient of $\mu_{c,i}$ in $P_1=0$. To make $c_4$ more closely resemble $c_3$, we notice that $\e^{-1}f-\a_{c,i}Id_m$ is a rank-one update of $\e^{-1}f-\a_{c,i}Id_m-h\Re(d)/\Re(c)$, and so one can use the corresponding update formula for the determinants to get
$$
\det(\e^{-1}f-\a_{c,i}Id_m)=\det\Big(\e^{-1}f-\frac{h\Re(d)}{\Re(c)}-\a_{c,i}Id_m \Big)+\frac{1}{\Re(c)}\Re(d)\operatorname{adj}\big(\e^{-1}f-\frac{h\Re(d)}{\Re(c)}-\a_{c,i}Id_m \big) h.
$$
In particular, the leading order component of $\det(\e^{-1}f-\a_{c,i}Id_m)$ is then, after diagonalizing $\operatorname{adj}(f-h\Re(d)/\Re(c))$, given by
$$
\det(\e^{-1}f-\a_{c,i}Id_m)=\frac{1}{\Re(c)}\Re(d)r_i\ell_ih \prod_{j\not=i}\a_{c,j}+ H.O.T.
$$
from which we recover the Kato-style formula from $\mu_{c,i}=-c_4/c_3$.
\er

\subsection{Region (ii): nonexistence of imaginary eigenvalues}\label{s:transfer}
The first is to show that $M_0$ in case (ii) has no pure imaginary eigenvalues other than the simple
zero eigenvalue in its kernel.
To this end, we note that the zero e-vectors for $M_0$ are
$\ell=(\alpha, 1, \beta), r=(0, 1,0)^T$, which gives complementary projections
$$
\hbox{$L=\bp 1 & 0 & 0\\ 0 & 0 & 1\ep$  and $R=\bp 1 & 0\\ -\alpha & -\beta \\ 0 & 1\ep$}.
$$
This gives the reduced matrix
\be\label{redhatM0}
\hat M_0:= LM_0 R= \bp 2A_0^2 \Re(c) & A_0 \Re(d)\\ 2A_0 h i\hat \hat\sigma & fi\hat \hat\sigma \ep
\ee
complementary to the kernel of $M_0$.

What we need to show is that $\hat M_0$ has no imaginary eigenvalues $i\tau$ for $\hat \sigma\neq 0$,
or, equivalently,
\be\label{transcond}
\hbox{\rm
	$\Delta(\tau, \check \sigma):=
	\det \bp 2A_0^2\Re(c)-i\tau  & A_0 \Re(d)\\ 2A_0h\check \sigma& f\check \sigma-\tau \ep
	$
	has no real roots $(\tau, \check \sigma)$ for $\check \sigma \neq 0$.}
\ee

\bl\label{transferlemma}
Under nondegeneracy conditions \eqref{indcouple} and \eqref{shyp} below, along with $\Re \hat c\neq 0$,
condition \eqref{transcond} holds for all choices of model parameters.
\el

\begin{proof}
	Expanding $\Delta=0$, and setting real and imaginary parts separately to zero, we obtain for the imaginary 
	part
	\be\label{im}
	0= \det \bp -\tau  & A_0 \Re(d)\\ 0 \check \sigma& f\check \sigma-\tau \ep
	= \tau \det(f\check \sigma-\tau).
	\ee
	and (factoring out $2A_0$ from the first column and $A_0$ from the first row)
	\be\label{re}
	0= \det \bp 2A_0^2 \Re(c)  & A_0 \Re(d)\\ 0 \check \sigma& f\check \sigma-\tau \ep
	= \det \bp \Re(c)  &  \Re(d)\\ h \check \sigma& f\check \sigma-\tau \ep
	\ee
	for the real part.
	
	From \eqref{im}, either $\tau=0$, or else $\det(f\check \sigma-\tau)=0$.
	In the first case, \eqref{re} simplifies to
	$$
	0= \check \sigma \det \bp \Re(c)  &  \Re(d)\\ h & f\ep=\det(f) \Re (\hat c),
	$$
	which is excluded by our nondegeneracy condition $\hat c\neq 0$, $\det f\neq 0$,
	and by the assumption $\check \sigma \neq 0$.
	
	Thus, we need only examine the case $\det(f\check \sigma -\tau)=0$, or (using $\tau, \check \sigma\neq 0$)
	$$
	\tau/\check \sigma \in \sigma(f).
	$$
	Taking without loss of generality coordinates such that $f$ is diagonal, 
	$$
	f=\diag\{f_1,\dots, f_m\},
	$$
	impose the additional nondegeneracy conditions of ``individual coupling'':
	\be\label{indcouple}
	\hbox{\rm $h_j \Re (d_j)\neq 0$ for each $j=1,\dots,m$}
	\ee
	and strict hyperbolicity
	\be\label{shyp}
	\hbox{\rm $\spec(f)$ simple.}
	\ee
	Then, taking $\tau/\check \sigma= f_j$, without loss of generality $j=1$,
	and substituting in \eqref{re}, we obtain, factoring out $\check \sigma \neq 0$ from the final row,
	$$
	0= \check \sigma \det \bp \Re(c)  &  \Re(d_1) & \Re(d_2)&\dots & \Re(d_m)\\ h_1 & 0 &0& \dots & 0\\
	h_2 & 0 & f_2-f_1 & \dots & 0\\
	\vdots & \vdots & \vdots & \dots & \vdots\\
	h_m & 0 &  \dots &0& f_m-f_1\ep
	= \check \sigma  h_1 \Re(d_1) \det \bp   f_2-f_1 & \dots & 0\\
	\vdots &  \dots & \vdots\\ 0 &  \dots & f_m-f_1\ep,
	$$
	giving $0= \check \sigma h_1 \Re(d_1) \Pi_{j\neq 1}(f_j-f_1)$, 
	which, by $\check \sigma \neq 0$, \eqref{indcouple}, and \eqref{shyp}, is a contradiction.
\end{proof}

\subsection{Region (ii): affine dependence on $1/\check \sigma$}
The second is to show that the observation for $m=1$ that \eqref{l21} is affine in $\check\sigma^{-1}$ 
remains true in the vector case $m>1$. 
The same argument as in \ref{s:step2check} showing the denominator was proportional to $\check\sigma$ 
is easily adapted to the vectorial case to show that the corresponding denominator is proportional to 
$\check\sigma^m$ and is thus omitted. Focusing on the numerator and recalling \eqref{fcharexp1}, 
we see that the matrix $\tilde{M}(\rho;\check\sigma)$ whose determinant is in the numerator takes the block form
\begin{equation}\label{mblock}
	\tilde{M}(\rho;\check\sigma)=\bp
	\tilde{M}_{11}(\rho) & \tilde{M}_{12}\\
	\tilde{M}_{21}(\rho;\check\sigma) & \tilde{M}_{22}(\rho;\check\sigma)
	\ep,
\end{equation}
where $\tilde{M}_{12}$ is a $2\times m$ constant matrix, $\tilde{M}_{21}$ is a $m\times 2$ matrix taking the form
\begin{equation}\label{mblock21}
	\tilde{M}_{21}(\rho;\check\sigma)=\bp 2A_0i\check\sigma h & 0_{m\times1}\ep +\cO(\rho)_{m\times 2},
\end{equation}
and $\tilde{M}_{22}$ is an $m\times m$ matrix taking the form
\begin{equation}\label{mblock22}
	\tilde{M}_{22}(\rho;\check\sigma)=i\check\sigma f+\cO(\rho)_{m\times m}.
\end{equation}
In \eqref{mblock21} and \eqref{mblock22}, the subscripts denote the shape of the error terms. Of particular interest is the order  $\rho$ term in the determinant of $\tilde{M}(\rho;\check\sigma)$. Let us illustrate the computation in the $m=2$ case, the argument extends naturally to the higher $m$-case. Each column of $\tilde{M}(\rho;\check\sigma)$ may be written as
\begin{equation}\label{mcolumns}
	\tilde{M}_i(\rho;\check\sigma)=\tilde{M}_i^0(\check\sigma)+\rho\tilde{M}_i^1(\check\sigma).
\end{equation}
Hence by multilinearity of $\det$, we may compute the coefficient $\rho$ in $\det(\tilde{M}(\rho;\check\sigma))$ as
\ba\label{detexpanded}
	\frac{\d}{\d \rho}\det\tilde{M}(\rho;\check\sigma)\rvert_{\rho=0}&
	=\det\Big(\begin{bmatrix}\tilde{M}_1^1(\check\sigma) & \tilde{M}_2^0(\check\sigma) & \tilde{M}_3^0(\check\sigma) &\tilde{M}_4^0(\check\sigma)\end{bmatrix} \Big)+...\\
		&\quad +\det\Big(\begin{bmatrix}\tilde{M}_1^0(\check\sigma) & \tilde{M}_2^0(\check\sigma) & \tilde{M}_3^0(\check\sigma) &\tilde{M}_4^1(\check\sigma)\end{bmatrix} \Big),
\ea
where the $\tilde{M}_i^j$ are as in \eqref{mcolumns}. There are three types of terms in the sum in \eqref{detexpanded}, the first column having upper index one, the second column having upper index one, and the final $m$ columns having upper index one. For each summand in \eqref{detexpanded}, we use cofactor expansion along the column whose upper index is one, which we now illustrate for the first three columns when $m=2$. Starting with the first summand, we find
\begin{equation}\label{detexpandedcol1}
	\begin{aligned}
		&\det\Big(\begin{bmatrix}\tilde{M}_1^1(\check\sigma) & \tilde{M}_2^0(\check\sigma) & \tilde{M}_3^0(\check\sigma) &\tilde{M}_4^0(\check\sigma)\end{bmatrix} \Big)=c_{11}\det\bp * & * & *\\
		0 & i\check\sigma f_{11} & i\check\sigma f_{12}\\
		0 & i\check\sigma f_{21} & i\check\sigma f_{22}\ep\\
		&\quad -c_{21}\bp * & * & *\\0 & i\check\sigma f_{11} & i\check\sigma f_{12}\\
		0 & i\check\sigma f_{21} & i\check\sigma f_{22}\ep
		+c_{31}\det\bp * & * & *\\ * & * & *\\ 0 & i\check\sigma f_{21} & i\check\sigma f_{22}\ep-c_{41}\det\bp * & * & * \\ * & * & *\\ 0 & i\check\sigma f_{11} & i\check\sigma f_{12}\ep,
	\end{aligned}
\end{equation}
where the $*$'s denote suitable entries of $\tilde{M}_{11}$ and $\tilde{M}_{12}$, whose precise values do not matter save that they are independent of $\check\sigma$, and the $f_{ij}$ denote the entries of the matrix $f$. Finally, we have denoted the entries of the vector $\tilde{M}_1^1$ by $c_{j1}$, for $j=1,2,3,4$. Observe that we can always factor out exactly one or two powers of $\check\sigma$ in \eqref{detexpandedcol1}. For the second summand, we find
\begin{equation}\label{detexpandedcol2}
	\begin{aligned}
		&\det\Big(\begin{bmatrix}\tilde{M}_1^0(\check\sigma) & \tilde{M}_2^1(\check\sigma) & \tilde{M}_3^0(\check\sigma) &\tilde{M}_4^0(\check\sigma)\end{bmatrix} \Big)=c_{12}\det\bp * & * & *\\
		2A_0h_1i\check\sigma & i\check\sigma f_{11} & i\check\sigma f_{12}\\
		2A_0h_2i\check\sigma & i\check\sigma f_{21} & i\check\sigma f_{22}\ep\\
		&\quad -c_{22}\bp * & * & *\\2A_0h_1 i\check\sigma & i\check\sigma f_{11} & i\check\sigma f_{12}\\
		2A_0h_2i\check\sigma & i\check\sigma f_{21} & i\check\sigma f_{22}\ep
		+c_{32}\det\bp * & * & *\\ * & * & *\\ 2A_0h_2i\check\sigma & i\check\sigma f_{21} & i\check\sigma f_{22}\ep\\
&\quad -c_{42}\det\bp * & * & * \\ * & * & *\\ 2A_0h_1i\check\sigma & i\check\sigma f_{11} & i\check\sigma f_{12}\ep.
	\end{aligned}
\end{equation}
As with, \eqref{detexpandedcol1}, we find that \eqref{detexpandedcol2} is of the form $\check\sigma (a\check\sigma+b)$ for known complex coefficients $a,b$. For our final example column, we find
\begin{equation}\label{detexpandedcol3}
	\begin{aligned}
		&\det\Big(\begin{bmatrix}\tilde{M}_1^0(\check\sigma) & \tilde{M}_2^0(\check\sigma) & \tilde{M}_3^1(\check\sigma) &\tilde{M}_4^0(\check\sigma)\end{bmatrix} \Big)=c_{13}\det\bp * & * & *\\
		2A_0h_1i\check\sigma & 0 & i\check\sigma f_{12}\\
		2A_0h_2i\check\sigma & 0 & i\check\sigma f_{22}\ep\\
		&\quad -c_{23}\bp * & * & *\\2A_0h_1i\check\sigma & 0 & i\check\sigma f_{12}\\
		2A_0h_2i\check\sigma & 0 & i\check\sigma f_{22}\ep
		+c_{33}\det\bp * & * & *\\ * & * & *\\ 2A_0h_2i\check\sigma & 0 & i\check\sigma f_{22}\ep-c_{43}\det\bp * & * & * \\ * & * & *\\ 2A_0h_1i\check\sigma & 0 & i\check\sigma f_{12}\ep.
	\end{aligned}
\end{equation}
There is an additional simplification of \eqref{detexpandedcol3} coming from the observation that $\tilde{M}_{12}$ is independent of $\rho$ and so $c_{13}$ and $c_{23}$ vanish, leading us to conclude \eqref{detexpandedcol3} is proportional to $\check\sigma$. Similar considerations apply to the final term, where the fourth column has upper index one. Hence combining \eqref{detexpandedcol1}, \eqref{detexpandedcol2}, and \eqref{detexpandedcol3}, we find that \eqref{detexpanded} is of the form
\begin{equation}\label{newSigma}
	\frac{\d}{\d \rho}\det\tilde{M}(\rho;\check\sigma)\rvert_{\rho=0}=\check\sigma(A\check\sigma+B),
\end{equation}
for known complex constants $A$ and $B$. To extend to the general case, we observe that the analogs of \eqref{detexpandedcol1}, \eqref{detexpandedcol2}, and \eqref{detexpandedcol3} always have either $m$-rows proportional to $\check\sigma$ or $(m-1)$-rows proportional to $\check\sigma$, leading to the overall conclusion that the numerator is of the form $\check\sigma^{m-1}(A\check\sigma+B)$, which upon division by $\check\sigma^m$, gives us the desired conclusion that $\lambda_2$ is affine in $\check\sigma^{-1}$.

\medskip
This completes the proof of the second and final postponed computation from Remark \ref{affinermk},
thereby completing the proof of sufficiency in the vector case $m>1$.

\subsection{Final result}\label{s:fresvec}
With these modifications, we obtain the following vector version of Proposition \ref{stabcrit}.

\begin{definition}\label{astabdefvec}
	We define \emph{asymptotic diffusive stability} in the vector case as satisfaction of conditions
	\ref{tcond}, \ref{ccond}, and \eqref{preccond}.
	We define \emph{asymptotic instability} by failure of {\it asymptotic neutral stability} defined
	by satisfaction of \eqref{preccond} and
	\be
	\hbox{\rm $\Re \mu_t^0\geq 0$, $\Re \mu_{c,j}^0\geq 0$.}
	\ee
\end{definition}

\begin{proposition}\label{stabcritvec}
	Assuming the generic conditions of supercriticality \eqref{supercrit}, 
	hyperbolicity and noncharacteristicity of effective flux \eqref{splitass} and \eqref{fluxcond},
nontrivial Jordan structure \eqref{gencase}, and nonvanishing of $\Re \mu_t^0,\Re \mu_c^0$ in \eqref{genmu's},
	 asymptotic diffusive stability \eqref{astabdefvec},
	is \emph{necessary and sufficient} 
	for diffusive stability \eqref{dss} in the sense of Schneider of
	periodic (exponential) solutions \eqref{tw} of \eqref{ampeq} 
	for $m>1$, for $0<\eps\leq \eps_0$ sufficiently small, where $\eps_0$ is uniform on compact
	parameter sets satisfying the above assumptions.
\end{proposition}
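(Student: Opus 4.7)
The plan is to follow the six-region analysis of Section \ref{s:suff} used to prove Proposition \ref{stabcrit}, replacing scalar perturbation coefficients with their matrix/vector counterparts and invoking the vector versions of the key computations already carried out in Section \ref{s:ext}. For the innermost region (i), $|\check\sigma|\leq 1/C$, the block-diagonalization and balancing transformation of Section \ref{s:disp} go through verbatim with $p$ a row vector, $h$ a column vector, and $f$, $e_B$ matrices; the Taylor expansions \eqref{alpha's}--\eqref{genmu's} are replaced by the vector expansions computed in Section \ref{s:sysmatched}, yielding $m$ conservative modes $\lambda_{c,j}$ with leading-order imaginary parts $\alpha_{c,j}^0\in\spec(f-h\Re(d)/\Re(c))$ (invoking \eqref{preccond} to ensure these are pure imaginary) and real parts controlled by $\mu_{c,j}^0$ as in \eqref{ccond}, plus one translational mode $\lambda_t$ governed by $\mu_t^0$. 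Strict asymptotic diffusive stability thus gives stability in this region by continuity of the analytic expansions in $\eps$.

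For the intermediate region (ii), $1/C\leq|\check\sigma|\leq C$, the scalar argument reduced after factoring out a zero eigenvalue to excluding pure imaginary roots of the reduced characteristic polynomial; this step is carried out in Lemma \ref{transferlemma} under the nondegeneracy hypotheses, yielding a uniform spectral gap between the zero eigenvalue and the remaining $m+1$ branches. Standard matrix perturbation theory then produces $m+2$ continuous branches in $\rho=\eps\check\sigma$, with the perturbed zero mode $\lambda_0(\rho;\check\sigma)=\lambda_1\rho+\lambda_2\rho^2+h.o.t.$ requiring careful evaluation of $\lambda_2$. The extended multilinear expansion of Section \ref{s:transfer} establishes that $\Re(\lambda_2)$ remains affine in $1/\check\sigma$ in the vector case as well, so its sign may be checked only at the endpoints $|\check\sigma|=1/C$ (already handled by the Taylor expansion of region (i)) and $|\check\sigma|=C$ (matched to region (iii) below). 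The remaining nonzero eigenvalues at $\rho=0$ inherit stability from their negative real parts by continuity.

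For region (iii), $C\leq|\check\sigma|\leq 1/(C\eps)$, we perform the global change of coordinates by $T$ in \eqref{Tdef}, now with $h$, $p$ understood as vectors; under \eqref{splitass} and \eqref{fluxcond} the lower-right block $i\check\sigma f+2f^{-1}A_0^2h\Re(d)-e_B\rho^2$ has spectrum of size $\gtrsim|\check\sigma|$, so an exact block-diagonalization $\Id+O(1/|\check\sigma|)$ yields $m$ fast eigenvalues together with $2$ Darcy-type eigenvalues whose leading part is $\Re(\hat c)<0$ (inherited by continuity from the $|\check\sigma|=C$ matching) and $\mu_t^0\rho^2$. As in the scalar case, stability of the Darcy block $m$ and the affine-in-$1/\check\sigma$ structure of $\Re(\lambda_2)$ together close the estimate on this region, thereby also completing region (ii). Regions (iv)--(vi) extend without essential change: region (iv) uses spectral separation of size $|\check\sigma|$ between lower-right and upper-left blocks; region (v), upon the rescaling $\sigma=\eps^2\check\sigma$, uses the compactness hypothesis \eqref{compat} (replacing the scalar sign condition on $e_B$, cf.\ Remark \ref{compatrmk}) to obtain a uniform spectral gap; and region (vi) follows from parabolicity at leading order.

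The main obstacle, and the aspect requiring the separate treatment of Section \ref{s:ext}, is that in the vector case the effective flux $f-h\Re(d)/\Re(c)$ need not have real spectrum and may possess repeated or nonsemisimple eigenvalues; this is precisely why hypotheses \eqref{preccond} and \eqref{splitass} are imposed. Hypothesis \eqref{preccond} guarantees that the first-order coefficients $\alpha_{c,j}$ of the conservative branches are pure imaginary, so that the conjugate-symmetry argument of Remark \ref{symmrmk} applies (otherwise a first-order instability would appear under a sign change of $\hat\sigma$), while \eqref{splitass} ensures simplicity of the splitting so that the $m$ branches $\mu_{c,j}$ are well-defined as distinct analytic eigenvalues and the exact block-diagonalization of region (iii) is available. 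Uniformity of $\eps_0$ on compact parameter sets then follows, exactly as in the scalar proof of Proposition \ref{stabcrit}, from joint continuity of all spectral gaps and nonvanishing quantities.
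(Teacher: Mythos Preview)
Your proposal is correct and takes essentially the same approach as the paper: the paper itself does not give a standalone proof of Proposition \ref{stabcritvec}, but rather states (at the start of Section \ref{s:ext}) that the scalar six-region argument of Section \ref{s:suff} goes through word for word modulo the specific modifications for regions (i) and (ii) supplied in Sections \ref{s:sysmatched}--\ref{s:transfer} and the observations of Remarks \ref{splitrmk}, \ref{compatrmk}, and \ref{affinermk}, all of which you correctly invoke. One small correction: the affine-in-$1/\check\sigma$ computation you cite is not in Section \ref{s:transfer} (which contains Lemma \ref{transferlemma} on the absence of imaginary eigenvalues) but in the subsequent subsection; and note that Lemma \ref{transferlemma} also requires the individual-coupling and strict-hyperbolicity conditions \eqref{indcouple}--\eqref{shyp}, which you should list explicitly among the nondegeneracy hypotheses.
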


\section{Rigorous validation}\label{s:validation}
Finally, we discuss the relation between solutions of \eqref{ampeq} and their behavior with 
convective Turing bifurcation in \eqref{bl}, as described in \cite{WZ3}.
Denote by $u=\bp u_1\\u_2\ep$, $f=\bp f_1 \\ f_2\ep$, $B=\bp B_1\\B_2\ep$ the decompositions in
nonconservative (i.e., the first $n-m$) and conservative (i.e., the last $m$) coordinates
in \eqref{bl}, with $f$, $B$, $R$ sufficiently smooth, and $B$ strictly parabolic,
and let $u(x,t)\equiv u^*$ be a constant, equilibrium solution of \eqref{bl} satisfying the Turing
assumptions \cite[Hypothesis 1]{WZ3}.
Note that parabolicity of $B$ implies parabolicity of $e_B$ and $\Re(a)>0$ in \eqref{ampeq},
by the recipe given in \cite{WZ3}, verifying our assumptions on \eqref{ampeq}.

Under \cite[Hypothesis 1(2)]{WZ3}, we have $R_1$ full rank with respect to $u_1$, hence, local to
$u^*$, there is a function $u_1=\phi^*(u_2)$ for which $R_1(\phi^*(u_2), u_2)\equiv 0$, uniquely
determining $u^*_1$ as a function of $u_2^*$. Hence, by a coordinate shift $u_1 \to u_1-\phi^*(u_2)$,
$u_2\to u_2- u_2^0$, where $u_2^0$ is some base point under consideration,
we may assume without loss of generality $u_1^*=0$ for any such equilibrium state, while $u_2^*$ varies
freely within an open set of $u_2^0$.
Linearizing \eqref{bl} about the constant solution $u^*$, we assume that the dispersion relation of
the associated constant-coefficient symbol has strictly negative spectrum for Fourier frequencies $k\neq 0$
for negative values of the bifurcation parameter $\nu$, and at the bifurcaton point $\nu=0$
there exists simple eigenvalues $\lambda =\pm i\tau$ at $k=\pm k_*\neq 0$ and an m-fold semisimple
eiganvalue $\lambda=0$ at $k=0$, with all other spectra strictly negative; moreover, we assume
that the real part of the spectrum has second-order contact in $k$ with the imaginary axis at $k=0, \pm k_*$,
departing to second order as $k$ is varied from $0$ or $\pm k_*$, and first-order contact at $k=\pm k_*$,
growing linearly in $\nu$ as $\nu$ increases through the bifurcation value $\nu=0$.
For the structure assumed in \eqref{bl}, these conditions are equivalent to the Turing assumptions of \cite{WZ3}.

Then, by the results of \cite{WZ3}, there exists a self-consistent and well-posed multi-scale expansion of form
\eqref{ampeq}, \eqref{eq:ansatz} formally governing small-amplitude solutions for $\nu>0$ sufficiently small.
Moreover, this expansion may be continued to all orders.
The coefficients may be determined as described in \cite[\S 3]{WZ3}, with linear terms coming from the
eigenstructure of the linear dispersion relation considered as a function of $k$, $\nu$, and $B_0$; 
however, the details of this recipe will not concern us here, other than the fact that generic coefficients
of \eqref{ampeq} induce generic coefficients of \eqref{ampeq}, so that generic assumptions on \eqref{bl}
may be made via assumptions on \eqref{ampeq}.

\subsection{Existence}\label{s:valexist}
We begin by recalling (a slightly refined version of) the rigorous validation result
established previously in \cite{WZ3} in the context of existence of periodic traveling waves.
Refining a bit the description of \cite[Thm. 6.5]{WZ3}, we have the following result
asserting existence of exact solutions nearby the asymptotic solutions predicted by \eqref{ampeq}, \eqref{tw}.

\begin{theorem}[Existence of exact solutions]\label{main1}
	Under the above-described Turing Hypotheses (encompassing those of \cite{WZ3}),
	with $\nu=\eps^2$, for $\kappa$, $B_0$ lying in any compact subset of domain \eqref{solndom},
	for $0<\eps\leq \eps_0 $ sufficiently small
	there exists a smooth family of traveling-wave solution 
	\be\label{twexact}
	u(x,t)=\bar u^{\eps,\kappa, B_0} (kx+\bar \Omega t), \qquad k=k_*+ \eps \kappa
	\ee
	of amplitude $|u|\sim \eps$ of \eqref{bl}, lying within $O(\eps^2)$ of
	approximate solution \eqref{eq:ansatz}, \eqref{tw} with $\omega$ in \eqref{tw} replaced
	by $\tilde \omega=\omega + O(\eps^3)$ (thus determining $\bar \Omega$ up to $O(\eps^3)$.
	Moreover, up to translation, these are the unique nontrivial small-amplitude periodic traveling waves 
	of \eqref{bl}.
\end{theorem}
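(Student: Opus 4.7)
The plan is to apply an implicit function theorem about the approximate solution \eqref{eq:ansatz}, \eqref{tw}, exploiting the observation recorded in Section \ref{s:previous} that the existence problem is \emph{nonsingular} once one restricts to the traveling-wave manifold of form \eqref{tw} with $|A|$ and $B$ constant: in this regime $B$ is slaved to $A$ by an algebraic relation, so that the formally singular $\eps^{-1}$ term in \eqref{ampeq}(ii) vanishes identically.

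First I would substitute the ansatz $u(x,t) = \bar{u}(kx + \bar\Omega t)$ with $k = k_* + \eps \kappa$ into \eqref{bl}, converting the PDE into a quasilinear ODE on a circle of normalized period $2\pi$ with unknowns $(\bar u, \bar\Omega)$, where $\bar\Omega$ will absorb the translational degree of freedom. Writing $\bar{u} = U^\eps_{\mathrm{approx}} + \eps^N V$, with $V \in H^s$ of the circle and $U^\eps_{\mathrm{approx}}$ the expansion \eqref{eq:ansatz} continued to sufficient order as in \cite{WZ3}, the profile equation becomes a functional equation $F(V, \bar\Omega; \eps, \kappa, B_0) = 0$. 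The self-consistency of the multi-scale construction makes the residual $F(0, \omega; \eps, \kappa, B_0)$ of order $O(\eps^N)$ for arbitrarily large $N$, so it suffices to invert the linearization $L^\eps := \partial_{(V, \bar\Omega)} F|_{(0,\omega)}$ with uniform-in-$\eps$ bounds on an appropriate complement of its finite-dimensional kernel.

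The second step is to analyze $L^\eps$. After the exponentially weighted change of coordinates used in Section \ref{s:redcc}, $L^\eps$ has constant coefficients, and its Fourier symbol at integer wave number is governed, modulo the algebraic slaving $u_1 = \phi^*(u_2)$ coming from $R_1$ full rank, by the matrix family $M(\eps,\cdot)$ of \eqref{eq:truncatedmatrix}. The Turing Hypotheses and the compatibility assumption \eqref{compat} together guarantee a uniform spectral gap at all nonzero integer wave numbers; the only obstruction to inversion is the finite-dimensional kernel at wave number zero, spanned by one translational mode plus the parameter directions associated to $(A_0,\kappa,B_0)$. Parametrizing the family by $(\kappa,B_0)$ and fixing a translational phase absorbed by $\bar\Omega$, a Lyapunov--Schmidt reduction collapses the problem to a scalar bifurcation equation whose unique solvability follows from the nondegeneracy of \eqref{A0soln} under the supercriticality hypothesis \eqref{supercrit} and within the domain \eqref{solndom}.

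The main obstacle is obtaining uniform-in-$\eps$ bounds on $(L^\eps)^{-1}$ despite the formally unbounded $\eps^{-1} f \partial_x$ contribution. This is resolved by observing that the singular term acts only in the mean-mode block and that \eqref{compat} supplies a spectral gap of order $\eps^{-1}$ there for all nonzero integer wave numbers, so that the resolvent on the complement of the neutral modes is actually \emph{bounded} (in fact vanishingly small on the singular block) rather than growing as $\eps \to 0$. Combined with the $O(\eps^N)$ residual, this yields a correction of order $\eps^{N - O(1)}$; taking $N$ large enough produces the claimed $O(\eps^2)$ approximation error and $O(\eps^3)$ frequency correction. Uniqueness modulo translation follows from nonsingularity of the Jacobian of the reduced bifurcation map at $(\kappa,B_0)$, the same condition used to apply the inverse function theorem.
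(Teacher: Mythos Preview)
Your overall strategy---implicit function theorem / Lyapunov--Schmidt reduction about an approximate solution built from the multi-scale expansion---is correct and is essentially what the paper defers to (the paper's own proof is just a citation to \cite{WZ3} and \cite[Thm.~1.2]{WZ1}, plus the remark that the restriction to small $B_0$ can be removed by recentering).

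However, there is a conceptual confusion running through your second and fourth paragraphs. The existence problem is posed for the original PDE \eqref{bl}, not for the amplitude equations \eqref{ampeq}. The linearization of \eqref{bl} about the small periodic wave (or, at leading order, about the constant state $u^*$ at $\nu=0$, $k=k_*$) is a periodic-coefficient operator on the circle whose Fourier symbol at integer wave number $n$ is the dispersion symbol of \eqref{bl} at frequency $nk_*$; it contains no $\eps^{-1}$ term whatsoever. The singular factor $\eps^{-1}f\partial_{\hat x}$ and the matrix $M(\eps,\cdot)$ of \eqref{eq:truncatedmatrix} are artifacts of the amplitude-equation rescaling $\hat x=\eps x$, and do not appear in the profile equation you actually have to solve. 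Likewise, the exponentially weighted change of coordinates of Section~\ref{s:redcc} reduces the \emph{amplitude} linearization to constant coefficients; it does not apply to the \eqref{bl} linearization, which is only approximately constant-coefficient (constant plus $O(\eps)$). Your entire fourth paragraph is therefore addressing a nonexistent obstacle: as the paper emphasizes in Section~\ref{s:previous}, the existence problem is \emph{nonsingular} precisely because for traveling waves with $|A|$ and $B$ constant the $B$-equation is trivially satisfied, so the amplitude reduction collapses to standard Ginzburg--Landau and the correspondence with \eqref{bl} becomes an ordinary bifurcation problem with uniform estimates coming directly from the Turing spectral gap hypotheses on \eqref{bl}, not from \eqref{compat}.

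Once this confusion is removed, your first three paragraphs describe the standard argument accurately: the Turing hypotheses give a uniform spectral gap on nonzero Fourier modes for the \eqref{bl} linearization, the $(m+2)$-dimensional kernel at mode zero and modes $\pm 1$ is handled by Lyapunov--Schmidt with $(\kappa,B_0,\bar\Omega)$ as free parameters, and the reduced bifurcation equation is solved via \eqref{A0soln} under \eqref{supercrit}, exactly as in \cite{WZ1,WZ3}.
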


\begin{proof}
	The version of this theorem stated in \cite{WZ3} is for $(\kappa, B_0)$ sufficiently small.
	Since choice of the center $B^*$ is arbitrary, that there is no loss of generality in fixing
	$B_0\equiv 0$, so long as all estimates are uniform, so that small $B_0$ is no real restriction.
	Validation on the full existence domain \eqref{solndom}, along with the additional detail given
	here on the speed of the wave, then follows as in \cite[Thm. 1.2]{WZ1} for the case without
	conservation laws. We omit the further details of this involved but by 
	now standard (in particular, nonsingular)
	argument. See also the comments of \cite[\S 1.5]{WZ1} on nonlocal reduction of
	the conservation law case to the standard one, which gives another route to this result
	via \cite[Thm. 1.2]{WZ1}, which as noted in \cite{WZ1} applies also to nonlocal equations. 
\end{proof}

\subsection{Stability}\label{s:valstab}
We next turn to the question of stability of exact periodic solutions of \eqref{bl}, 
and its relation with stability of periodic solutions \eqref{tw} of \eqref{ampeq}.
Following \cite{JNRZ}, 
let $\mathcal{L}^{\eps,\kappa, B_0}$ denote the linearized operator about an exact periodic traveling-wave
solution $\bar u^{\eps,\kappa, B_0}$ considered in a comoving frame for which it becomes stationary.
The $L^2(\R)$ spectrum of $\mathcal{L}^{\eps,\kappa, B_0}$ consists of $\lambda$
such that there exists a solution $v(x)$ on a single period of
\be\label{interior}
\mathcal{L}^{\eps,\kappa, B_0}v=\lambda v, \qquad x\in [0,X]= [0,1/\kappa],
\ee
satisfying for some Bloch-Floquet number $\sigma \in [-\pi/X, \pi/X)$ the boundary conditions
\be\label{twist}
v(X)e^{i\sigma  X}=v(0).
\ee

Diffusive spectral stability in the sense of Schneider is defined in this context as
\be\label{dssBF}
\hbox{\rm $\Re  \lambda \leq c(\eps)| \sigma|^2/(1+| \sigma|^2)$ \; 
for $\lambda \in \spec(\mathcal{L}^{\eps,\kappa, B_0})$,}
\ee
where $\nu=\eps^2$.
In the absence of neutral spectra other than the $m+1$ translational/conservational modes
at $(\sigma, \lambda)=(0,0)$, \eqref{dssBF} is
necessary and sufficient for linearized and nonlinear stability \cite{JNRZ}.
By continuity of spectra for parabolic operators, diffusive spectral stability holds automatically
for $0<\eps\leq \eps_0$ sufficiently small, except possibly for 
\be\label{apriori}
|\lambda|, |\sigma|\leq 1/C, 
\ee
where $C>0$ is arbitrary.

Expanding $ \mathcal{L}^{\eps,\kappa, B_0}= \mathcal{L}^{0,0 , B_0} + O(\eps)$, one may then perform
as in \cite{WZ3} a Lyapunov-Schmidt reduction, following the approach laid out in \cite{M1,M2,M3,S1,S2}, to
obtain a reduced problem consisting in the rescaled $\check \lambda, \check \sigma$ variables 
of an $(m+2)\times (m+2)$ linear system of equations 
\be\label{redeq}
		\Big[-\check\l+\check M(\e,\check \s,\kappa,B_0)+
		\check \cE(\e,\check \s,\kappa,B_0,\check \l) \Big]\bp a_1\\a_2\\ \vec{b}\ep=0,
\ee
with $a_j\in \R$, $\vec{b}\in \R^m$, corresponding
to the $m+2$ dimensional kernel of $\mathcal{L}^{0,0,B_0}$ at $\sigma=0$,
where $\check M$ is as in Section \ref{s:suff}.

That is, first, the question of diffusive stability is reduced to determining diffusive spectral stability
of the $m+2$ neutral eigenvalues branching as $\eps$ increases from zero
from the kernel of the linearized operator about the constant
state $(0,B_0)$ at bifurcation point $\eps=0$.
And, second, these neutral eigenvalues may be identified as solutions of the reduced problem
problem \eqref{redeq}, a nonlinear eigenvalue problem that is a
perturbation by $\cE$ of the matrix perturbation problem corresponding to stability of
traveling waves of \eqref{ampeq}.

We collect here a streamlined version of the results established in \cite[Thm. 7.11 and Thm 8.7]{WZ3}
and in the course of their proofs (in the case of Thm. 8.7, see the discussion just below).\footnote{
	See also the explicit computations of \cite[Appendix A.1]{Wh} for an example model with $m=1$.}

\begin{proposition}[Truncation error bound \cite{WZ3}]\label{truncation_bd}
	For $0<\eps\leq\eps_0$ sufficiently small,
	diffusive stability of periodic solutions of $\bar u^{\eps,\kappa, B_0} (kx+\bar \Omega t)$
	is equivalent to diffusive stability for $\sigma$, $\lambda$ arbitrarily small of 
	$m+2$ ``neutral'' modes $\check \Lambda_j$ satisfying
	the reduced eigenvalue problem \eqref{redeq}, where $\check M$ as in Section \ref{s:suff} is
	the eigenvalue problem associated with \eqref{ampeq}, and 
\be\label{totalerror_check}
|\check\cE|= O(\eps, \eps^2 \check \sigma, \eps^3 \check \sigma^2, \eps^4 \check \sigma^3, 
	(\eps^4 + \eps^6\check \sigma^2)  \check \lambda^2).
\ee
\end{proposition}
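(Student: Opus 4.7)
The plan is to pass through Bloch--Floquet decomposition, localize to the low-frequency/low-eigenvalue regime, and carry out a Lyapunov--Schmidt reduction around the $(m+2)$-dimensional kernel of the unperturbed linearization, then carefully track the order at which residual errors enter after rescaling to the $(\check\sigma,\check\lambda)$ variables.

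First I would Bloch--Floquet decompose the $L^2(\R)$ spectrum of $\mathcal{L}^{\eps,\kappa,B_0}$ into the union over $\sigma \in [-\pi/X,\pi/X)$ of spectra of the periodic operators $\mathcal{L}^{\eps,\kappa,B_0}_\sigma$ defined by \eqref{interior}--\eqref{twist}. By uniform parabolic continuity of spectra and the Turing gap hypothesis at $\eps=0$, the dangerous spectrum is confined to \eqref{apriori}; outside this range diffusive stability \eqref{dssBF} holds automatically for $0<\eps\le\eps_0$, which reduces matters to the small-$|\sigma|$, small-$|\lambda|$ regime.

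Next, at the base point $(\eps,\sigma)=(0,0)$ the operator $\mathcal{L}^{0,0,B_0}_0$ is the constant-coefficient linearization about $(u_1,u_2)=(0,B_0)$ at bifurcation, and by the Turing hypotheses has an exactly $(m+2)$-dimensional kernel: two real directions spanning the neutral complex Turing--Hopf mode at $k=k_*$, together with $m$ constant mean-mode directions at $k=0$. I would perform a Lyapunov--Schmidt reduction adapted to this kernel by writing $v=v_0+v_\perp$ with $v_0$ in the kernel (coordinates $(a_1,a_2,\vec b)$) and $v_\perp$ in its spectral complement, noting that the complementary block $\Pi_\perp(\mathcal{L}^{\eps,\kappa,B_0}_\sigma-\lambda)\Pi_\perp$ is uniformly invertible on \eqref{apriori} by the spectral gap, solving for $v_\perp=v_\perp(a_1,a_2,\vec b;\eps,\sigma,\kappa,B_0,\lambda)$ by a convergent Neumann series, and substituting back to obtain the finite-dimensional bifurcation equation \eqref{redeq}. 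By construction, the principal part of this equation is precisely the Bloch symbol $\check M$ of the formally derived amplitude system \eqref{ampeq}, since the amplitude equations \eqref{ampeq} are the Lyapunov--Schmidt reduction of \eqref{bl} on the same kernel.

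The technical heart of the argument, and the main obstacle, is the bookkeeping of the error $\check\cE$. The residual has two sources: (i) the $O(\eps^q)$ formal truncation of the multi-scale ansatz \eqref{eq:ansatz}, which can be made arbitrarily small in $\eps$ by carrying the expansion of \cite{WZ3} to sufficiently high order, and (ii) higher-order contributions of the complementary inverse, which each come with additional powers of the Bloch frequency $\sigma$ and the eigenvalue parameter $\lambda$ through the Neumann series. In the original $(\sigma,\lambda)$ variables one obtains a raw bound of the schematic form $|\cE|=O(\eps^Q, \sigma\eps^{Q-1}, \sigma^2\eps^{Q-2},\sigma^3\eps^{Q-3},(\eps^{Q-2}+\eps^{Q-4}\sigma^2)\lambda^2)$ for the consistent expansion order $Q$ (with the $\lambda^2$ factor reflecting that the linear-in-$\lambda$ part is already absorbed into $-\check\lambda$ in \eqref{redeq}). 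Rescaling $\sigma=\eps\check\sigma$, $\lambda=\eps^2\check\lambda$ as in the analysis of Sections \ref{s:nec}--\ref{s:suff} converts this directly into the claimed mixed bound
\begin{equation*}
|\check\cE|=O(\eps,\eps^2\check\sigma,\eps^3\check\sigma^2,\eps^4\check\sigma^3,(\eps^4+\eps^6\check\sigma^2)\check\lambda^2).
\end{equation*}
The delicate point is that the order-by-order solvability of the formal multi-scale scheme of \cite{WZ3} is exactly what ensures the cancellations needed so that no $O(1)$ or $O(\eps\check\sigma^0)$ remainder is left outside the retained symbol $\check M$; invoking the explicit recipe from \cite[\S 3]{WZ3} certifies this at each order.

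Finally, the claimed equivalence follows because, on \eqref{apriori}, $\lambda\in\spec(\mathcal{L}^{\eps,\kappa,B_0}_\sigma)$ iff the $(m+2)\times(m+2)$ determinant $\det[-\check\lambda+\check M+\check\cE]$ vanishes; in view of the smallness of $\check\cE$ just established, this determinant has the same $m+2$ roots in $\check\lambda$, up to a perturbation in the same mixed order, as the unperturbed matrix problem for \eqref{ampeq} analyzed in Sections \ref{s:nec} and \ref{s:suff}. Hence diffusive spectral stability for $(\sigma,\lambda)$ arbitrarily small of the $m+2$ neutral modes of \eqref{redeq} is equivalent to diffusive spectral stability of $\bar u^{\eps,\kappa,B_0}$, completing the proposal.
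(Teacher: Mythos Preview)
Your outline—Bloch decomposition, localization via \eqref{apriori}, Lyapunov--Schmidt reduction onto the $(m+2)$-dimensional kernel, then error tracking—is the right skeleton, and indeed is what \cite{WZ3} does. But the paper's proof here is quite different in character: it does not redo that argument. It simply cites \cite[Thms.~7.11, 8.7]{WZ3}, which establish the \emph{stronger} bound $|\check\cE|=O(\eps^2,\eps^2\check\sigma,\eps^3\check\sigma^2,\eps^4\check\sigma^3,(\eps^4+\eps^6\check\sigma^2)\check\lambda^2)$ for a higher-order ``tilde'' expansion of \eqref{ampeq}, and then observes that passing from the tilde model back to the truncated model \eqref{ampeq} costs an additional $O(\eps)$, yielding \eqref{totalerror_check}. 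You miss this two-step structure (tilde bound plus model comparison) entirely.

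More concretely, your bookkeeping contains a genuine error. You write ``rescaling $\sigma=\eps\check\sigma$, $\lambda=\eps^2\check\lambda$ as in the analysis of Sections~\ref{s:nec}--\ref{s:suff},'' but the paper's rescaling is $(\check\lambda,\check\sigma)=\eps^{-2}(\lambda,\sigma)$, i.e.\ $\sigma=\eps^2\check\sigma$; see Remark~\ref{Ermk} and the repeated statements $(\lambda,\sigma)=\eps^2(\check\lambda,\check\sigma)$ in the proof of Theorem~\ref{main2}. (The $\check\sigma=\hat\sigma/\eps$ of Section~\ref{s:suff} involves the \emph{amplitude-equation} frequency $\hat\sigma$, itself already an $\eps^{-1}$ rescaling of the PDE Bloch number.) With your incorrect scaling, your schematic raw bound $O(\eps^Q,\sigma\eps^{Q-1},\dots)$ cannot be made to reproduce \eqref{totalerror_check} for any single $Q$: the powers simply do not line up, and your $\lambda^2$ coefficient $(\eps^{Q-2}+\eps^{Q-4}\sigma^2)$ is already inconsistent with the correct $(\eps^2+\sigma^2)$ of \eqref{totalerror} at $Q=3$. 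The actual derivation of the specific exponents requires the detailed order-by-order computation of \cite{WZ3} (and \cite[App.~A.1]{Wh}), not a schematic Neumann-series count; your appeal to ``solvability of the formal multi-scale scheme'' is too vague to certify the cancellations needed. Finally, your last paragraph overreaches: the proposition asserts only equivalence with the \emph{perturbed} reduced problem \eqref{redeq}, not yet with the unperturbed $\check M$ problem—that further step is the content of Theorem~\ref{main2}.
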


\begin{proof}
In \cite{WZ3}, there is demonstrated the stronger truncation error bound
$$
|\check\cE|= O(\eps^2, \eps^2 \check \sigma, \eps^3 \check \sigma^2, \eps^4 \check \sigma^3, 
(\eps^4 + \eps^6\check \sigma^2)  \check \lambda^2)
$$
	for the ``tilde model'', a related, higher-order expansion of \eqref{ampeq} (denoted the
	``truncated model'' in \cite{WZ2}).
	Accounting for the difference in truncated vs. tilde models gives an additional $O(\eps)$ error
	contribution, resulting in \eqref{totalerror_check}.
See \cite[Appendix A.1]{Wh}, for a detailed computation of errors for the truncated model
in an example case with $m=1$.
\end{proof}

\br\label{Ermk}
Expressing \eqref{redeq} in the ``original'' coordinates 
$(\check \lambda, \check \sigma)= \eps^{-2}(\lambda, \sigma)$ natural
to PDE \eqref{bl}, as the system for \eqref{ampeq} perturbed by error $ \cE(\e,\s,\kappa,B_0,\l)$, 
	\eqref{totalerror_check} translates to
\be\label{totalerror}
 |\cE|=O(\eps^3, \eps^2 \sigma, \eps \sigma^2, \sigma^3, (\eps^2 + \sigma^2) \lambda^2).
\ee
The corresponding estimate for the tilde model becomes
 $|\cE|=O(\eps^4, \eps^2 \sigma, \eps \sigma^2, \sigma^3, (\eps^2 + \sigma^2) \lambda^2)$,
 and the difference between truncated and tilde models $O(\eps^3)$.
\er


The estimate \eqref{totalerror_check} is effectively a ``truncation error'' or residual bound.
What is needed to determine stability for \eqref{bl} is to convert this truncation error to ``convergence error,''
or difference between $\Lambda_j$ and the solutions $\lambda_j$ of the unperturbed system corresponding
to stability for \eqref{ampeq}, an issue requiring a detailed analysis of the eigenstructure of $\check M$. 
In this regard, it is worth mentioning that, while the estimates \eqref{totalerror_check} come via
\eqref{totalerror} through Taylor expansion valid on the entire regime $\lambda$, $\sigma$ small, the 
solutions $\check \lambda_j$ have Taylor expansions valid only only on the much smaller
regime $\check \lambda=\sigma/\eps^2 $, $\check \sigma=\sigma/\eps^2$ sufficiently small \cite[Thm. 5.9 and 
discussion in proof]{WZ3}.

Such an analysis was carried out in \cite{WZ3} on the region of analyticity
$|\check \sigma|\leq 1/C$ of Case (i), by essentially the same argument followed in Section \ref{s:nec}.
However, the corresponding treatment of remaining regions was left as an important and apparently difficult
open problem. Indeed, the possibility of completing such an analysis was conjectured in \cite{WZ3},
somewhat optimistically, on the basis of numerical evidence, with a positive outcome far from clear,
either for closeness of $\Lambda_j$ and $\lambda_j$, or for determination of practical stability criteria
for the $\lambda_j$.

The latter problem we have resolved in Section \ref{s:suff}, showing that the stability requirements
already determined in \cite{WZ3} on the analyticity region $|\check\sigma|\leq 1/C$ are in fact
sufficient for stability on all regions.
But, the $\check \cE=0$ case studied there was already a matrix perturbation problem, 
hence in the course of this analysis we have had to analyze the eigenstructure of the principal
parts of $\check M$ in various regimes in order to absorb higher-order truncation errors arising 
from various spectral expansions.
Hence, the analysis already completed for the second problem is also sufficient to resolve the first problem,
provided only that we show that errors \eqref{totalerror_check} are also absorbably small.

By this approach, we obtain as a corollary our second, and final, main theorem resolving stability.


\bt[Stability of exact solutions]\label{main2}
Under the Turing hypotheses described above, with $\nu=\eps^2$ and $\kappa$, $B_0$ satisfying 
\eqref{solndom}, let $\bar u^{\eps,\kappa, B_0} (kx+\bar \Omega t)$ be the exact periodic solutions \eqref{twexact}
of PDE \eqref{bl} guaranteed by Theorem \ref{main1}, and $(\bar A, \bar B)^{\kappa,\omega}=(A_0 e^{i(\kappa x - \omega t)}, B_0)$
the corresponding periodic solutions \eqref{tw} of the associated amplitude equations \eqref{ampeq}.
Then, under the nondegeneracy conditions of Proposition \ref{stabcrit}, diffusive stability of
$\bar u^{\eps,\kappa, B_0}$ is equivalent to diffusive stability of 
$(\bar A, \bar B)^{\kappa,B_0}=(A_0 e^{i(\kappa x - \omega t)}, B_0)$
for $0<\eps\leq \eps_0$ sufficiently small, which in turn is equivalent to the linear algebraic conditions
\eqref{tcond} and \eqref{preccond}-\eqref{ccond}.
Here, as elsewhere, $\eps_0$ may be chosen uniformly for compact parameter sets on which the Turing and
nondegeneracy assumptions are satisfied.
\et

\begin{proof}
Evidently, it suffices to establish necessity and sufficienty of conditions 
\eqref{tcond} and \eqref{preccond}-\eqref{ccond} 
for stability of $\bar u^{\eps,\kappa, B_0}$, 
as we have shown already for stability of $(\bar A, \bar B)^{\kappa,B_0}$.
We study separately the regions described in cases (i)--(vi) of Section \ref{s:suff}.
To prove necessity of conditions \eqref{tcond} and \eqref{preccond}-\eqref{ccond}, 
it is sufficient to show on just one of these regions, that eigenvalues $\Lambda_j$ and $\lambda_j$ 
are sufficiently close that their real parts have the same sign. 
To prove sufficiency, we must show this on each of the regions of cases (i)-(vi). 

\smallskip \noindent
	\textbf{Case (i)}, ($|\sigma|\leq \eps^2/C$):  
	In \cite[Thm. 8.7]{WZ3}, it is shown for the tilde model that 
	$ |\Lambda_j-\lambda_j|=O(\eps^4, \eps \sigma, \eps \sigma^2, \sigma^3)  $
	for all $j$; moreover \cite[Thm. 8.7, final line]{WZ3}, for $\lambda_t$ and $\lambda_c$, for
	which both $\lambda_j(\eps, 0)=0$ and $\Lambda_j(\eps, 0)=0$,
	$ |\Lambda_j-\lambda_j|=O(\eps \sigma, \eps \sigma^2, \sigma^3).  $
	For, both $\lambda_j$ and $\Lambda_j$ are analytic on this regime, hence there can be
	no error term of order $\eps^4$, nonvanishing at $\sigma=0$.
	The same argument applies for the truncated model \eqref{ampeq} studied here, but
	with $\eps^4$ order errors replaced by $\eps^3$.
	Since both $\lambda_j$ and $\Lambda_j$ have first-order Taylor coefficient
	pure imaginary (also shown in \cite{WZ3}), we have by similar reasoning that 
	\be\label{ierrest}
	|\Re	\Lambda_j-\Re \lambda_j|=O(\eps \sigma^2, \sigma^3)=o(\min\{\eps^2 , |\sigma|^2\}),
	\ee
	as there can be no error term in real parts of order $\eps \sigma$ nonvanishing to order $\sigma$.

	On the other hand, the analysis of Section \ref{s:suff}, case (i) gives under
	our nondegeneracy assumptions that the stable eigenvalue
$\lambda_s$ has real part negative of order $-\eps^2$, the translational eigenvalue $\lambda_t$
	has real part of size $ \eps^{-2} \sigma^2$, and the conservational eigenvalues $\lambda_c$ have real part of
	size $\eps^{-4}\sigma^2$ in the (original, pde) coordinates
	$(\lambda, \sigma)=\eps^2(\check \lambda, \check \sigma)$;
	see \eqref{rescdisp1}-\eqref{tgenmu's1}.
	(Note, as $|\sigma|\ll \eps^2$ on this region, that all $|\lambda_j|\ll 1$, despite large coefficients.)
	Comparing with \eqref{ierrest}, we thus see that the real parts of $\lambda_j$ and $\Lambda_j$
	have common sign in each case.

	\medskip
	The analysis of case (i), carried out previously in \cite{WZ3}, both establishes {\it necessity}
	of conditions \eqref{tcond} and \eqref{preccond}-\eqref{ccond}, and reduces the study of
	{\it sufficiency} to the examination of cases (ii)-(vi).
	In the rest of the proof, we carry out this remaining part, showing sufficiency
	in each of the cases (ii)-(vi).

	\medskip

	\noindent
	\textbf{Case (ii)}.  ($1/C\leq |\check \sigma|\leq C $):  
	This case for \eqref{ampeq} is about continuity of spectra of $M_0$ with respect
	to ``errors'' $\rho M_1$ and $\rho^2M_2$ that are merely $o(1)$.
	So, to accomodate also the additional error $\check \cE$, we have only to show that
\eqref{totalerror_check} is small on this regime, and absorbably in the errors induced by $M_1$ and
$M_2$.
	We first observe, by $\check \lambda V= o(|\check \lambda|)V + (M_0 + o(1))V$ for $V\neq 0$ that 
	$(1+o(1))|\check \lambda|\leq |M_0+o(1)|\lesssim C$, hence $\check \lambda= O(1)$,
	and thus by \eqref{totalerror_check} 
	\be\label{crudecheck}
	|\check \cE|= 
	O(\eps, \eps^2 \check \sigma, \eps^3 \check \sigma^2, \eps^4 \check \sigma^3, 
	(\eps^4 +\eps^6\check \sigma^2) \check \lambda^2)=
	O(\eps^2, \eps^2 \check \sigma, \eps^3 \check \sigma^2, \eps^4 \check \sigma^3, o(\check \lambda) )= o(1).
	\ee

	Thus, the induced errors are indeed small, preserving the order one separation between the kernel of
	$M_0$ and remaining eigenvalues featuring a spectral gap. It follows that the ``small'' eigenvalue
	bifurcating from the kernel remains analytic in $\rho$, uniformly in $\check \sigma$, and the remaining
	``order one'' eigenvalues possess a spectral gap, having real parts negative and uniformly bounded from zero.

	By the $o(1)$ error estimate \eqref{crudecheck}, the order one eigenvalues with spectral gap retain
	this spectral gap under perturbation, by simple continuity of spectra, and so $\Re\Lambda_j$ and 
	$\Re \lambda_j$ have the same signs for these modes, carrying the same stability information.

	For the remaining ``small'' eigenvalue $\lambda_t$, we have
	$\Re \lambda_t\sim \rho^2=\eps^2\check \sigma^2$, we must look more carefully,
	as potential errors of order $\eps^2$ or $\eps^2\check \sigma$ are much greater than
	$\Re \lambda_t$ near the lower boundary $|\check  \sigma|=1/C$ where $|\check \sigma|\ll 1$.
	However, the same reasoning shows that such error terms therefore cannot occur, by matching
	at the boundary with region (i).  For, as we have demonstrated analyticity of the projector
	onto the small eigenmode, and as the initial error $\check \cE$ is analytic on all $|\sigma|\ll 1$, we 
	have that the projected error $\check \cE_t$ in the reduced problem for $\Lambda_t$ is analytic
	as well, as a function of $\check \sigma, \eps, \rho$, for $\s$ in region (ii), and indeed
	for complexified $\check \sigma$, $\eps$, for $1/C\leq |\s|\leq C$ and $|\eps|\ll 1$.
	But, by the balancing transformation argument in region (i), we have already shown that the projector
	onto the small, $\lambda_t$ mode is analytic in (complexified) $\s$, $\eps$ for 
	for $ |\s|\leq 1/C$ and $|\eps|\ll 1$, hence the projector, and projected error are analytic for
	$|\s|\leq C$ and $|\eps|\ll 1$, so that {\it $\lambda_t$ and $\Lambda_t$ are analytic as well, with
	convergent power series representations about $(\s,\eps)=(0,0)$.}
	We may thus obtain the desired sharpened estimates from the power series analysis of region (i), which shows
	that ``harmful'' order $\eps^4$ or $\eps^2\s$ terms do not occur.

	Note, in showing analyticity of the complexified $\lambda_t$ projector on region (ii), we require that
	$\hat M_0$ in \eqref{Ms1} have a kernel of dimension one, i.e., that the complementary matrix
$$ \hat M_0:= LM_0 R= \bp 2A_0^2 \Re(c) & A_0 \Re(d)\\ 2A_0 h i\hat \hat\sigma & fi\hat \hat\sigma \ep$$
derived in \eqref{redhatM0} be invertible for $\s\neq 0$. But, direct computation gives
$$
\det \hat M_0= 2A_0^2i\s \Re(c)\det\Big( f - h\Re(d)\Re(c) \Big)\neq 0
$$
for $\s\neq 0$, since $\Re(c)=0$ and $\det ( f - h\Re(d)\Re(c) )=0$ are excluded 
by our previous nondegeneracy assumptions.

\smallskip\noindent
	\textbf{Case (iii)}. ($C\leq |\check \sigma|\leq 1/C\eps $):  
	Similarly as in Case (ii), the treatment of Case (iii) in Section \ref{s:suff}
	requires only that errors be $o(1)$, much smaller than spectral separation between small
	eigenvalue $\lambda_t$ and remaining eigenvalues.
	Starting with $\check \lambda V= o(|\check \lambda|)V + (M_0 + o(1))V$ for $V\neq 0$, giving 
	$(1+o(1))|\check \lambda|\leq |M_0+o(1)|\lesssim \check \sigma$, hence $\check \lambda= O(\check \sigma)$,
	we find that 
	$$
	\eps^4 |\check \lambda|^2 =O(\eps^4 \check \sigma^2)=O(1/C)=o(1).
	$$

	Likewise,
	$$
	O(\eps, \eps^2 \check \sigma, \eps^3 \check \sigma^2, \eps^4 \check \sigma^3, \eps^4\check \sigma^2) )=
	O(\eps, \eps/C, \eps/C^2 , \eps/C^3, \eps^2/C^2)=o(1),
	$$
confirming smallness of remaining error terms.	
Finally, we note that the ``small'' $\lambda_t$ mode has real part 
$\sim \rho^2= \eps^2|\check \sigma|^2\gg \eps^2 |\check \sigma|$
on this region, hence can absorb all error terms other than constant, $O(\eps)$ ones. 
Observing that analyticity of $\lambda_t$ and $\Lambda_t$ holds up to $|\rho|\ll 1$, by the same
argument as in region (ii), we find again that such an error term cannot exist.

\smallskip\noindent
	\textbf{Case (iv)}.  ($1/C\eps \leq |\check \sigma|\leq 1/C\eps^2$): 
	This case is straightforward. For, both symbol and real parts of eigenvalues are of the same
	order $(1+|\rho|^2)$, hence we need only show that truncation errors are order $o(1+|\rho|^2)$
	in order to see by simple continuity of eigenvalues under perturbation
	that $\Re\Lambda_j$ and $\Re \lambda_j$ have the same signs.

	Here, $|\rho|=\eps |\check \sigma|\geq 1/C$,  while $|\check \sigma|\gg 1$
	and $\eps |\rho|=\eps^2\check \sigma|\ll 1$.
	Thus, $\lambda$-errors in $\check \cE$ are bounded by $\eps^4 |\check \lambda|^2=|\lambda|^2=o(1)$
	and $\eps^6|\check \sigma|^2 |\check \lambda|^2=|\rho|^2\eps^4 |\check \lambda|^2=o(\rho^2)$,
	both $o(1+|\rho|^2)$.
	Likewise, the remaining errors in $\check \cE$ are bounded by
	$$
	O(\eps, \eps^2 \check \sigma, \eps^3 \check \sigma^2, \eps^4 \check \sigma^3, \eps^4\check \sigma^2) )=
	o(1,1,|\rho|^2, |\rho|^2, |\rho|^2)=o(1+|\rho|^2),
	$$
	so again are absorbable.

\smallskip\noindent
	\textbf{Case (v)}.  $ 1/C\eps^2\leq |\check \sigma|\leq C/\eps^2$:
	This case was carried out for \eqref{ampeq} in the ``original PDE coordinates'' 
	$\lambda, \sigma$ coordinates with $1/C\leq | \sigma|\leq C$,
featuring a spectral gap of order $\sigma^2\geq 1/C^2$ for eigenvalues $\lambda_j$.
The error \eqref{totalerror} of
 $O(\eps^3, \eps^2 \sigma, \eps \sigma^2, \sigma^3, \lambda^2)$ need thus be only $o(\sigma^2)$, or,
 sufficiently, $o(1)$.
 Recalling that $|\lambda|=o(1)$, we see that this is evidently so.

\smallskip\noindent
	\textbf{Case (vi)}.  $ C/\eps^2\leq |\check \sigma|$:
In this case, working with the original PDE coordinates $(\sigma, \lambda)=\eps^2(\check \sigma , \check \lambda)$,
we have $|\sigma|\geq C$.
Recall that this case is always stable for \eqref{ampeq}.
Likwise, as noted in \eqref{apriori}, the spectra for the exact PDE problem is also automatically stable
in this regime. Thus, stability of $\lambda_j$ and $\Lambda_j$ are (trivially) equivalent.
\end{proof}

\br\label{lambda_texp}
An examination of the analyticity argument for the projector onto the $\lambda_t$ mode in region (ii) 
shows that the argument can in fact be extended also to all of region (iii).
This shows that $\lambda_t$ and $\Lambda_t$ have convergent analytic expansions
in $\hat \sigma$, $\eps$ for $|\hat \sigma|\leq 1/C$ and $\eps\ll 1$, similarly as
in the nonconservative case.
In particular, it gives an additional verification that the second order coefficient $\mu_t^0$
derived on region (i) is valid also on region (iii), for which the Dary approximation is valid.
This gives an additional proof that the descriptions of $\lambda_t$ behavior for Eckhaus and Darcy
approximations agree.
\er


%

\appendix

\section{Turing bifurcation for example model}\label{s:Teg}
For the model problem \eqref{MOeq}. 
homogeneous equilibrium states take the form $(\rho_0, u_0, c_0)$, with  
\be\label{eqeg}
u_0=0,\quad  c_0=\alpha \tau \rho_0,
\ee
without loss of generality (rescaling parameters if necessary) $\rho_0=1$.
For simplicity in writing, let us fix $\mu=\nu=0$; as we note at the end, this
does not affect the end result.

Linearized about such a state, \eqref{MOeq} then becomes
\be\label{lineg}
\bp \rho\\u\\c\ep_t= \bp 0&0&0\\0&-\gamma&0\\\alpha&0&-1/\tau\ep \bp \rho\\u\\c\ep
+
\bp 0&-1&0\\-2A&0& \beta\\ 0&0&0 \ep \bp \rho\\u\\c\ep_{x}
+
\bp 0&0&0\\ 0&0&0\\ 0&0&D\ep \bp \rho\\u\\c\ep_{xx}.
\ee

The associated dispersion relation $\lambda=\lambda(k)$ is thus given by
\be\label{dispeg}
0=\det
\bp -\lambda & -ik & 0\\ -2Aik & -\gamma -\lambda & \beta ik\\ \alpha & 0 & -k^2D-1/\tau -\lambda
\ep.
\ee
Examining \eqref{dispeg} at $k=0$, we see that the eigenvalues are $0, -\gamma, -1/\tau$, 
so that there is a single ``critical'' analytic branch $\lambda_*(k)$ with $\lambda_*(0)=0$.
Moreover, it is readily seen by matching common orders of $k$ that $\lambda_*$ vanishes to first order as well.
Substituting 
$$
\lambda_*(k)= \theta k^2 + O(k^3)
$$
into \eqref{dispeg} and collecting terms of order $k^2$,
we find that $(-1/\tau)(\theta k^2 \gamma + Ak^2)+\beta\alpha k^2=0$, or
\be\label{thetaform}
\theta = \alpha \beta \tau -A.
\ee
Thus, diffusive stability of the dispersion relation near $k=0$ is equivalent to 
\be\label{disscriteg}
A>\alpha \beta \tau.
\ee

On the other hand, for $\alpha\beta=0$, the linearized matrix becomes block triangular, and
we can solve the dispersion relation explicitly as
\be\label{abzero}
\lambda= -1/\tau - Dk^2, \frac{-\gamma \pm \sqrt{\gamma^2 -8Ak^2}}{2},
\ee
yielding strict diffusive stability by inspection. Thus, strict diffusive stability holds for $\alpha\beta$
sufficiently small, while, by \eqref{disscriteg}, fails for $\alpha\beta$ sufficiently large.

We may conclude therefore that a Turing bifurcation occurs for some value $(\alpha \beta)_*>0$.
Essentially the same argument yields the result for arbitrary $\mu, \nu >0$ as well.
It would be very interesting to determine the nature and properties of this bifurcation, in the
spirit of the current analysis.

\section{Numerical illustration, case $m=1$}\label{s:num}
In the below figures, we display the results of a numerical comparison of the
spectra of the linearized equations for \eqref{ampeq} and the results predicted by Taylor expansion,
for a generic example system with $m=1$.
The parameters chosen are $a=1+i$, $b=1$, $c=-3+2i$, $d=-1+2i$, $e_B=1$, $f=1$, $g=2+2i$, $h=2$, $\eps=10^-2$.
In the illustrations, blue dots denote the real parts of (numerically approximated) true eigenvalues, 
	green, predictions from $\mu_t$, and red, predictions from $\mu_c$, with lefthand
	panel displaying results for small $ \sigma$ and righthand large $ \sigma$,
	and each pair of panels associated with a different wave-number $\kappa$.\\
	
	In the left column, we've plotted the curves on $\abs{\hat\sigma}\leq10\e$ and in the right column, we have the curves on $\abs{\hat\sigma}\leq1$. We note the strong agreement between $\mu_c\hat\sigma^2$ and $\Re\lambda_c(\hat\sigma)$ on $\hat\sigma=o(\e)$ and good agreement between $\mu_t\hat\sigma^2$ and $\Re\lambda_t(\hat\sigma)$ on the region where $\hat\sigma=o(1)$, as is expected by Remark \ref{lambda_texp}. We emphasize that the singularity in the dispersion relations are quite prominent, as $\Re\lambda_c(\hat\sigma)$ reaches $O(1)$ when $\hat\sigma\sim \e$, in contrast to the classical Ginzburg-Landau or Matthews-Cox cases where $\Re\lambda_c(\hat\sigma)\sim\e^2$ when $\hat\sigma\sim\e$. Note also that the effects of singularity are visible
	in the right-hand column of figures through the ``spikes'' near $\hat\sigma=0$. The final point to which we wish to draw emphasis is that it is $\lambda_c$ and $\lambda_s$ that intersect 
	and lose analyticity within an $O(\eps)$ domain, 
	while $\lambda_t$ remains analytic on a domain of $o(1)$. 
	We recall that this played a key role in the analysis of Cases (ii)-(iv). One final remark is that, for fixed $\eps$, the agreement between $\mu_t\hat\sigma^2$ and $\lambda_t(\hat\sigma)$ gets worse as $\abs{\kappa}$ 
increases.
This is to be expected, as the formula for $\mu_t$ in \eqref{mut} goes to $\infty$ as $\abs{\kappa}^2\to \kappa_E^2$.\\
	
	In the figures below, we have chosen the frequencies $\kappa=0$, $\kappa=\kappa_E/4(=0.25)$ and $\kappa=\kappa_E/2=(0.5)$ as the numerically observed stability boundary occurs at around $\kappa\approx\pm 0.3$.\footnote{
		Close to the real Ginzburg-Landau boundary of $1/3$.}


\begin{figure}[htbp]
\begin{center}
$
\begin{array}{ll}
\includegraphics[scale=0.6]{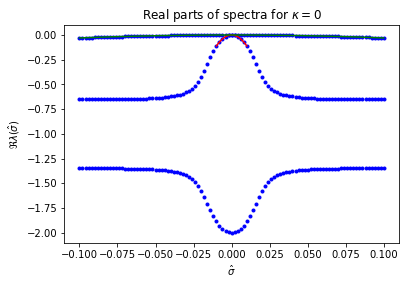}\;
\includegraphics[scale=0.6]{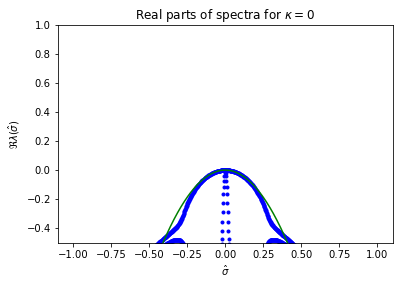}\;
\end{array}
$
\end{center}
\caption{ Wavenumber $\kappa=0$.}
\label{fig_1}\end{figure}

\begin{figure}[htbp]
\begin{center}
$
\begin{array}{ll}
\includegraphics[scale=0.6]{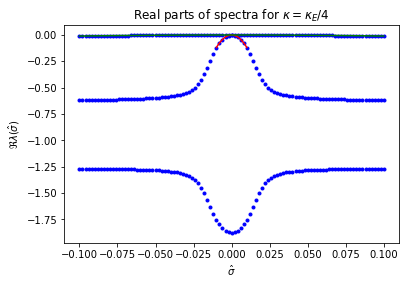}\;
\includegraphics[scale=0.6]{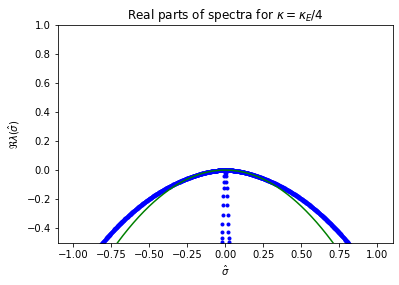}\;
\end{array}
$
\end{center}
\caption{Wavenumber $\kappa=\kappa_E/4$.}
\label{fig_2}\end{figure}

\begin{figure}[htbp]
	\begin{center}
		$
		\begin{array}{ll}
		\includegraphics[scale=0.6]{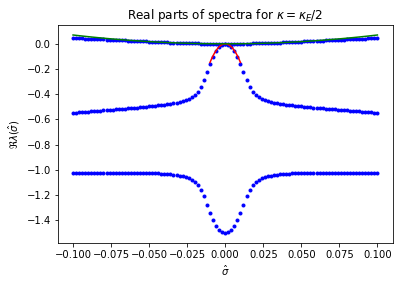}\;
		\includegraphics[scale=0.6]{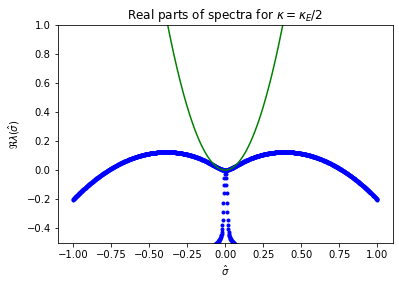}\;
		\end{array}
		$
	\end{center}
	\caption{ Wave number $\kappa=\kappa_E/2$.}
	\label{fig_3}\end{figure}

%
%
%
%
%
%
%
%
%
%

\section{Matched asymptotics for complex Ginzburg-Landau}\label{s:matched}
Here, we compute the expansion for the neutral translational mode for complex Ginzburg-Landau using matched asymptotics. To do so, we let $a,b,c$ be known complex numbers with $\Re(a),\Re(b)>0$ and $\Re(c)<0$ and consider a generic Ginzburg-Landau equation of the form
$$
A_T=aA_{XX}+bA+c|A|^2A.
$$
As before, we have a one-parameter family of periodic traveling waves parameterized by $\kappa$ of the form $A(X,T)=A_0e^{i(\kappa X-\omega T) }$ with $A_0>0$. The same linearization procedure as in \cite{WZ2} leads us to the spectral problem
$$
\lambda \bp u\\v\ep=-\s^2\bp \Re(a) & -\Im(a)\\ \Im(a) & \Re(a)\ep +2i\kappa \s \bp -\Im(a) & -\Re(a)\\ \Re(a) & -\Im(a)\ep+\bp 2A_0^2\Re(c) & 0 \\ 2A_0^2\Im(c) & 0 \ep=:m(\s)\bp u\\ v\ep.
$$
We then seek an eigenvalue $\l_t$ of the form $\l_t(\s)=i\a_t \s+\mu_t\s^2+O(\s^3)$. We notice that the second column of $m(\s)-\l_t(\s)Id$ is proportional to $i\s$, and so our expansion of the determinant is of the form
$$
\det(m(\s)-\l_t(\s)Id_2)=i\s\Big(\det P_0+i\s \det P_1+ H.O.T. \Big),
$$
with
$$
P_0=\bp 2A_0^2\Re(c) & -2\kappa\Re(a)\\ 2A_0^2\Im(c) & -2\kappa\Im(a)-\a_t\ep,
$$
and
$$
P_1=\bp -2\kappa\Im(a)-\a_t & -2\kappa\Re(a)\\ 2\kappa\Re(a) & -2\kappa\Im(a)-\a_t\ep+\bp 2A_0^2\Re(c) & -\Im(a)\\ 2A_0^2\Im(c) & \Re(a)+\mu_t\ep.
$$
Setting $\det P_0=\det P_1=0$ and solving the corresponding equations gives
$$
\a_t=-2\kappa\Im(a)+2\kappa\Re(a)\frac{\Im(c)}{\Re(c)} ,
$$
and
$$
\mu_t=-\frac{\big(-2\kappa\Im(a)-\a_t\big)^2+4\kappa^2\Re(a)^2+2A_0^2\big(\Re(a)\Re(c)+\Im(a)\Im(c) \big) }{2A_0^2\Re(c)} .
$$



\begin{thebibliography}{GMWZ7}



\bibitem[AGS]{AGS} D. Ambrosi, A. Gamba, and G. Serini, 
{\it Cell directional and chemotaxis in vascular morphogenesis,}
Bull. Math. Biol. 66 (2004), no. 6, 1851--1873.

\bibitem[BHZ]{BHZ} B. Barker, J. Humpherys, and K. Zumbrun, {\it One-dimensional stability of parallel shock layers in isentropic magnetohydrodynamics,} J. Differential Equations 249 (2010), no. 9, 2175--2213. 

\bibitem[BJNRZ]{BJNRZ} B. Barker, M.A. Johnson, P. Noble, L.M. Rodrigues, and K. Zumbrun, 
{\it Stability of viscous St. Venant roll waves: from onset to infinite Froude number limit,} 
J. Nonlinear Sci. 27 (2017), no. 1, 285--342.	

\bibitem[BJZ1]{BJZ1} B. Barker, S. Jung and K. Zumbrun, 
{\it Turing patterns in parabolic systems of conservation laws and numerically observed stability of periodic waves,}
Phys. D 367 (2018), 11--18.

\bibitem[BJZ2]{BJZ2} J.C. Bronski, M.A. Johnson, and K. Zumbrun,{\it On the modulation equations and stability of periodic generalized Korteweg-de Vries waves via Bloch decompositions,}
Phys. D 239 (2010), 2057--2065.

\bibitem[BLWZ]{BLWZ} K. Buck, V.T. Luan, A. Wheeler, and K. Zumbrun,
	{\it Numerical approximation of singular amplitude equations arising in biomorphogenesis}, Work in progress.














\bibitem[dRS1]{dRS1} B. de Rijk and G. Schneider {\it Global existence and decay in nonlinearly coupled reaction-diffusion-advection equations with different velocities,} J. Differential Equations 268 (2020), no. 7, 3392-3448.

\bibitem[dRS2]{dRS2} B. de Rijk and G. Schneider {\it Global existence and decay in multi-component reaction-diffusion-advection systems with different velocities: oscillations in time and frequency,} NoDEA Nonlinear Differential Equations Appl. 28 (2021), no. 1, Paper no. 2.

\bibitem[DPTZ]{DPTZ} P. Denton, S. Parke, T. Tao, and X. Zhang, {\it Eigenvectors from eigenvalues: a survey of a basic identity in linear algebra,} Bull. Amer. Math. Soc. (N.S.) 59 (2022), no. 1, 31-58.


\bibitem[DJMR]{DJMR} P. Donnat, J.-L. Joly, G. M\'etivier, and J. Rauch {\it Diffractive nonlinear geometric optics, } S\'eminaire sur les \'Equations aux D\'eriv\'ees Partielles, 1995-1996, Exp. No. XVII.

\bibitem[E]{E} W. Eckhaus, {\it Studies in nonlinear stability theory,} Springer tracts in Nat. Phil. Vol. 6, 1965.





\bibitem[FS1]{FS1} H. Freist\"hler and P. Szmolyan, {\it Spectral stability of small shock waves,} Arch. Ration. Mech. Anal. 164 (2002), no. 4, 287--309.

\bibitem[FS2]{FS2} H. Freist\"hler and P. Szmolyan, {\it Spectral stability of small-amplitude viscous shock waves in several space dimensions,} Arch. Ration. Mech. Anal. 195 (2010), no. 2, 353--373.


\bibitem[GAC]{GAC} A Gamba, D. Ambrosi, A. Coniglio, A.D. Candia, S.D. Taia, E. Geraudo, G. Serini, 
L. Preziosi, F. Bussolino, {\it Percolation, morphogenesis, and Burgers dynamics in blood vessels formation,}
Phys.  Rev. Lett., 90(2003).




	
\bibitem[H]{H} B. Hilder, {\it Modulating traveling fronts in a dispersive Swift-Hohenberg equation coupled to an additional conservation law, } J. Math. Anal. Appl. 513 (2022), no. 2.

\bibitem[HSZ]{HSZ} T. H\"acker, G. Schneider, and D. Zimmermann {\it Justification of the Ginzburg-Landau approximation in case of marginally stable long waves.} J. Nonlinear Sci. 21 (2011), no. 1, 93-113.



\bibitem [JNRYZ]{JNRYZ} M. Johnson, P. Noble, L.M. Rodrigues,  Z. Yang, and K. Zumbrun,
	{\it Spectral stability of inviscid roll waves,} Comm. Math. Phys. 367 (2019), no. 1, 265--316. 

\bibitem [JNRZ]{JNRZ} M. Johnson, P. Noble, L.M. Rodrigues,  and K. Zumbrun,
{\it Behavior of periodic solutions of viscous conservation laws under localized and nonlocalized perturbations,}
Invent. Math. 197 (2014), no. 1, 115--213. 

\bibitem [JNRZ2]{JNRZ2} M. Johnson, P. Noble, L.M. Rodrigues,  and K. Zumbrun,
	{\it Spectral stability of periodic wave trains of the Korteweg-de Vries/Kuramoto-Sivashinsky
	equation in the Korteweg-de Vries limit,} Trans. Amer. Math. Soc. 367 (2015), no. 3, 2159--2212.




\bibitem[JZ1]{JZ1} M. Johnson and K. Zumbrun, {\it Rigorous justification of the Whitham modulation equations for the generalized Korteweg-de Vries equation, } Stud. Appl. Math. 125 (1) (2010) 69-89.

\bibitem[JZ2]{JZ2} M. Johnson and K. Zumbrun, {\it Nonlinear stability of periodic traveling wave solutions of systems of viscous conservation laws in the generic case,} J. Differential Equations 249 (5) (2010) 1213-1240.

\bibitem[JZN]{JZN} M. Johnson, K. Zumbrun, and P. Noble, {\it Nonlinear stability of viscous roll waves,}
SIAM J. Math. Anal. 43 (2011), no. 2, 577--611.

\bibitem [K]{K} T. Kato,
{\it Perturbation theory for linear operators},
Springer--Verlag, Berlin Heidelberg (1985).

\bibitem [KSM]{KSM} P. Kirrmann, G. Schneider, and A. Mielke,
	{\it The validity of modulation equations for extended systems with cubic nonlinearities,}
Proc. Roy. Soc. Edinburgh Sect. A 122 (1992), no. 1-2, 85--91.

\bibitem [ShK]{ShK} Y. Shizuta, Yasushi and S. Kawashima, {\it  Systems of equations of hyperbolic-parabolic type with applications to the discrete Boltzmann equation,} Hokkaido Math. J. 14 (1985), no. 2, 249--275.





\bibitem[L]{L} T.-P. Liu, {\it Hyperbolic conservation laws with relaxation,} Comm. Math. Phys. 108 (1987), no. 1, 153--175.

\bibitem[LBK]{LBK}L\"ucke, M., W. Barten, and M. Kamps, 
{\it Convection in binary mixtures: the role of the concentration field},
Physica D 61 (1992) 183--196.

\bibitem[LW]{LW} Q.Q. Liu and X.L.Wu, {\it Stability of rarefaction wave for viscous vasculogenesis model,}
Discrete Contin.  Dyn. Syst. Ser. B, 27(2022), 7089--7108.

\bibitem[LWZ]{LWZ} O. Lafitte, M. Williams, and K. Zumbrun, {\it Block-diagonalization of ODEs in the semiclassical limit and $C^\omega$ versus $C^\infty$ stationary phase,} SIAM J. Math. Anal. 48 (2016), no. 3, 1773--1797.

\bibitem [Mai] {Mai} P. K. Maini,
{\it Applications of mathematical modelling to biological patern formation,} 
Coherent Strutures in Complex Systems (Sitges, 2000). Lecture Notes in Phys. Vol. 567,
Springer-Verlag, Berline (2001) 205--217.

\bibitem [Ma] {Ma} D. Manoussaki,
{\it Modeling and simulation of the formation of vascular networks,}
Math. Model. Mumer. Anal. 37 (2003) 581--600.

\bibitem[MZ1]{MZ1} C. Mascia and K. Zumbrun, 
{\it Pointwise Green's function bounds and stability of relaxation shocks,} 
Indiana Univ. Math. J. 51 (2002), no. 4, 773--904. 

\bibitem[MZ]{MZ} C. Mascia and K. Zumbrun,
{\it Stability of large-amplitude viscous shock profiles
of hyperbolic--parabolic systems,} 
Arch. Ration. Mech. Anal. 172 (2004), no.1, 93--131.

\bibitem[MC]{MC} P.C. Matthews and S.M. Cox, 
{\it Pattern formation with a conservation law,} 
Nonlinearity 13 (2000), no. 4, 1293--1320.

\bibitem[MetZ]{MetZ} G. M\'etivier and K. Zumbrun,
{\it Large-amplitude modulation of periodic traveling waves,}
Discrete Contin. Dyn. Syst. Ser. S 15 (2022), no. 9, 2609--2632.


\bibitem [MO] {MO} J.D. Murray and G.F. Oster,
{\it Generation of biological pattern and form,} J. Math.  Appl. Med. Biol. 1 (1984) 51--75.


\bibitem[M1]{M1} A. Mielke, 
{\it A new approach to sideband-instabilities using the principle of 
reduced instability,}
 Nonlinear dynamics and pattern formation in the natural environment 
(Noordwijkerhout, 1994), 206--222, 
Pitman Res. Notes Math. Ser., 335, Longman, Harlow, 1995. 

\bibitem[M2]{M2} A. Mielke, 
{\it Instability and stability of rolls in the Swift-Hohenberg equation,}
Comm. Math. Phys. 189 (1997), no. 3, 829--853. 


\bibitem[M3]{M3} A. Mielke, 
{\it The Ginzburg-Landau equation in its role as a modulation equation,}
Handbook of dynamical systems, Vol. 2, 759--834, 
North-Holland, Amsterdam, 2002.

%








\bibitem[PZ]{PZ} R. Plaza and K. Zumbrun, {\it An Evans function approach to spectral stability of small-amplitude shock profiles,} Discrete Contin. Dyn. Syst. 10 (2004), no. 4, 885--924.


\bibitem[S1]{S1} G. Schneider, 
{\it Diffusive stability of spatial periodic solutions of the Swift-Hohenberg equation,}
Commun. Math. Phys. 178, 679--202 (1996). 

\bibitem [S2]{S2} G. Schneider, {\it Nonlinear diffusive stability
of spatially periodic solutions-- abstract theorem and higher space
dimensions},
Proceedings of the International Conference on Asymptotics
in Nonlinear Diffusive Systems (Sendai, 1997),  159--167,
Tohoku Math. Publ., 8, Tohoku Univ., Sendai, 1998.


\bibitem[SZ]{SZ} W. Schopf, and W. Zimmermann, 
{\it Convection in binary mixtures: the role of the concentration field,}
1990, Phys. Rev. A 41, 1145.


\bibitem [SBP] {SBP} M. Scianna, C.G. Bell, L. Preziosi,
{\it A review of mathematical models for the formation of vasular networks,} in: J. Theoretical Biol. v. 333 (2013)
174--209 ISSN 0022-5193.



\bibitem[S]{S} A. Sukhtayev, 
{\it Diffusive stability of spatially periodic patterns with a conservation law,} 
Preprint, arXiv:1610.05395.



\bibitem[T]{T} A. Turing, 
{\it The chemical basis of morphogenesis,} Philos. Trans. Roy. Soc. Ser. B 237 (1952) 37--72.

\bibitem [vH]{vH} A. van Harten, 
{\it On the validity of the Ginzburg-Landau's equation,}
J. Nonlinear Sci. 1 (1991), pp. 397--422.

\bibitem[vSH]{vSH} W. van Saarloos and P.C. Hohenberg, 
{\it Fronts, pulses, sourc`es, and sinks in generalized complex Ginsburg-Landau equations,}
Physica D , 56 (1992) pp. 303--367.


\bibitem[WZ1]{WZ1} A. Wheeler and K. Zumbrun,
	{\it Convective Turing bifurcation}, preprint; arXiv:2101.07239. 

\bibitem[WZ2]{WZ2} A. Wheeler and K. Zumbrun,
{\it Diffusive stability of convective Turing patterns,} 
preprint; arXiv:2101.08360.

\bibitem[WZ3]{WZ3} A. Wheeler and K. Zumbrun,
{\it Convective Turing bifurcation with conservation laws,} 
preprint; arXiv:2305.16457. 

\bibitem[Wh]{Wh} A. Wheeler, {\it Convective Turing bifurcation with conservation laws and
applications to biomorphology,} PhD. Thesis, Indiana University, April 2023.

\bibitem[W]{W} G. B. Whitham, \emph{Linear and Nonlinear Waves}, Pure and Applied Mathematics (New York), John Wiley \& Sons Inc., New York, 1999.
Reprint of the 1974 original, A Wiley-Interscience Publication.





\bibitem[Ze]{Ze} Y. Zeng, {\it Gas dynamics in thermal nonequilibrium and general hyperbolic systems with relaxation,}
Arch. Ration. Mech. Anal. 150 (1999), no. 3, 225--279.

\end{thebibliography}
\end{document}